\setlist[enumerate,1]{label={(\alph*)}}
\setlist[enumerate,2]{label={(\roman*)}}
\newtheorem{thm}{Theorem}[section]
\newtheorem{prop}[thm]{Proposition}
\newtheorem{lem}[thm]{Lemma}
\newtheorem{lemma}[thm]{Lemma}
\newtheorem{algorithm}[thm]{Algorithm}
\newtheorem{ques}[thm]{Question}
\newtheorem{conj}[thm]{Conjecture}
\newtheorem*{notation*}{Notation}
\theoremstyle{definition}
\newtheorem{definition}[thm]{Definition}
\theoremstyle{remark}
\newtheorem{rmk}[thm]{Remark}
\newcommand{\ignore}[1]{}
\newcommand{\R}{\mathbb R}
\newcommand{\N}{\mathbb N}
\newcommand{\Prob}{\mathbb P}
\newcommand{\E}{{\mathbb{E}}}
\newcommand{\termdef}[1]{{\textbf{#1}}}
\newcommand{\oone}{{o \left(1\right)}}
\newcommand{\omegaone}{{\omega \left(1\right)}}
\newcommand{\hkn}{{H_k \left(n\right)}}
\newcommand{\hnp}{{\mathcal{H}_k \left( n ; p \right)}}
\newcommand{\steinParams}{{\left( n , k , k - 1 \right)}}
\newcommand{\F}{{\mathbb{F}}}
\newcommand{\FO}{{\mathcal{F}}}
\newcommand{\VO}{{\mathcal{V}}}
\newcommand{\EO}{{\mathcal{E}}}
\newcommand{\given}{{|}}
\DeclareMathOperator{\im}{Im}
\DeclareMathOperator{\rk}{rk}
\begin{document}
\title{$\steinParams$-Steiner Systems in Random Hypergraphs}
\author{Michael Simkin}
\address{Institute of Mathematics and Federmann Center for the Study of Rationality,
	The Hebrew University of Jerusalem, Jerusalem 91904, Israel}
\email{menahem.simkin@mail.huji.ac.il}

\dedicatory{Dedicated to Noam Yonat Rosen. ``Only one is my dove, my perfect one'' (Song of Songs 6:9 \cite{jewish1985tanakh}).}

\begin{abstract}
Let $H$ be a random $k$-uniform $n$-vertex hypergraph where every $k$-tuple belongs to $H$ independently with probability $p$. We show that for some $\varepsilon_k > 0$, if $p \geq n^{-\varepsilon_k}$, then asymptotically almost surely $H$ contains an $\steinParams$-Steiner System. Our main tool is Keevash's method of Randomized Algebraic Constructions.
\end{abstract}

\maketitle

\pagestyle{plain}

\section{Introduction}\label{sec:intro}

One of Erd\H{o}s and R\'enyi's first discoveries in the theory of random $G \left( n ; p \right)$ graphs \cite{erdHos1968random} is that the threshold for the appearance of a perfect matching is $p=\frac{\log n}{n}$. Consider a random $k$-uniform $n$-vertex hypergraph in which every $k$-set is included with probability $p$. Schmidt and Shamir \cite{schmidt1983threshold} asked what the threshold is for the existence of a perfect matching in this model. Their problem attracted a lot of attention and was eventually settled in a seminal paper of Johansson, Kahn, and Vu \cite{johansson2008factors} who showed that the threshold probability is $p = \frac{\log n}{n^{k - 1}}$ (the same as the threshold for the disappearance of isolated vertices).

However, a perfect matching in a $k$-uniform hypergraph is not the only natural analogue of a perfect matching in a graph. It is, in our mind, just as natural to ask about the emergence of an $\steinParams$-Steiner system, namely, a $k$-uniform hypergraph on $\left[ n \right]$ s.t.\ every $\left( k - 1 \right)$-size subset is contained in precisely one edge. Note that for $k=2$ this is just a (graphical) perfect matching.

Unlike perfect matchings, the mere existence of $\steinParams$-Steiner systems for infinitely many $n \in \N$ and $k \geq 5$ was a very long-standing open question until the recent breakthrough work of Peter Keevash \cite{Ke14} (see Theorem \ref{thm:Keevash}, below). He proved that for all but finitely many $n$ satisfying necessary divisibility conditions (see Section \ref{ssec:notation} below for the precise conditions) an $\steinParams$ Steiner system exists. An alternative proof was subsequently given by Glock, K\"uhn, Lo, and Osthus \cite{glock2016existence}. This makes it meaningful and interesting to consider the threshold for the appearance of Steiner systems in random hypergraphs.

Denote by $\hkn$ the set of $k$-uniform hypergraphs on vertex set $\left[ n \right]$. For $p \in \left[ 0,1 \right]$ let $\hnp$ be the distribution on $\hkn$ where each edge is included in the hypergraph with probability $p$, independently. If $\emptyset \neq \mathcal{P} \subsetneq \hkn$ is a monotone increasing property then there exists some unique $p_c = p_c \left( \mathcal{P} \right) \in \left[ 0,1 \right]$ s.t.\ $\Prob_{H \sim \mathcal{H} \left( n ; p_c\right)} \left[ H \in \mathcal{P} \right] = \frac{1}{2}$. If $I$ is an infinite subset of $\N$ and $\mathcal{P} = \left\{ \mathcal{P}_n \right\}_{n \in I}$ is a sequence of nontrivial, monotone increasing properties where for all $n \in I, \mathcal{P}_n \subseteq \hkn$, $p_c \left( \mathcal{P}_n \right)$ is the \termdef{threshold function} for $\mathcal{P}$ (for other, equivalent, definitions of the threshold function see \cite{bollobas1987threshold}). Let $N_k$ be the (infinite) set of integers $n$ for which an $\steinParams$-Steiner system exists. We ask:

\begin{ques}
	What is the threshold function for the property that $H \sim \hnp$ contains an $\steinParams$-Steiner system, where $n$ ranges over $N_k$?
\end{ques}

An obvious necessary condition for a hypergraph to contain a Steiner system is that every $\left(k-1\right)$-set must be contained in at least one edge. This provides a lower bound of $\Omega \left( \frac{\log n }{n} \right)$ on the threshold function. We don't know of any way to improve this and this might well be the correct threshold (see Section \ref{sec:future} for related conjectures). Our main result is an upper bound on the threshold function, which to our knowledge is new:

\begin{thm}\label{thm:main}
	For every $k \geq 2$ there exists some $\varepsilon > 0$ s.t.\ if $n \in N_k$ and $p \geq n^{-\varepsilon} $ then the probability that $H \sim \hnp$ contains an $\steinParams$-Steiner system is $1 - n^{-\omegaone}$.
\end{thm}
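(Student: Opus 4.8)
The plan is to open up Keevash's method of Randomized Algebraic Constructions --- the engine behind Theorem~\ref{thm:Keevash} --- and to run each of its stages inside the sparse random hypergraph $H$ rather than inside the complete hypergraph. First write $H = H_1 \cup H_2 \cup H_3$ as a union of independent copies of $\mathcal{H}_k(n;p')$ with $1-(1-p')^3=p$, so that still $p' \geq n^{-\varepsilon'}$ for a slightly larger $\varepsilon'$ (choosing $\varepsilon$ small enough in the statement absorbs this and the further losses below). In $H_1$ we construct an algebraic \emph{absorber} (``template'') covering a prescribed constant fraction of the $(k-1)$-subsets of $[n]$; in $H_2$ we run a nibble to cover almost all of the remaining $(k-1)$-subsets by a partial Steiner system; in $H_3$, using the absorber's reconfiguration property, we patch the small leftover into a full $\steinParams$-Steiner system. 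The divisibility hypothesis $n \in N_k$ enters exactly as in Keevash's argument, to keep the template and the final count consistent.

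The nibble stage is routine. Conditioned on the template $T \subseteq H_1$, one must cover every $(k-1)$-set not covered by $T$ using only the edges of $H_2$ that conflict with neither $T$ nor earlier choices; the standard semi-random argument --- iteratively picking a sparse random set of edges, deleting conflicts, and tracking the degrees of still-uncovered $(k-1)$-sets over $\mathrm{poly}\log n$ rounds --- is robust to sparsity, since $p \geq n^{-\varepsilon}$ keeps every relevant degree polynomial in $n$ throughout. It leaves a leftover $L$ that is $\oone\cdot n^{k-1}$ in size and, more importantly, ``spread,'' which is what the absorption step needs; as all tracked quantities stay $n^{\Omega(1)}$ from their failure thresholds, bounded-differences concentration gives success probability $1-n^{-\omegaone}$ here.

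The heart of the matter --- and the step I expect to be the main obstacle --- is the absorber. Keevash's template is a partial Steiner system whose edges are forced, up to a random relabelling of $[n]$, by the additive and multiplicative structure of a field $\F_q^{a}$, and it is exactly this algebraic rigidity that supplies the identities allowing the leftover to be swapped in. But the template has $\Theta(n^{k-1})$ edges, so no \emph{fixed} template can be expected to lie inside the polynomially sparse $H_1$: a union bound over the $n^{O(n)}$ relabellings shows that the expected number of templates contained in a sample of $\hnp$ is $\oone$ once $p$ is polynomially small. The fix I would pursue is to make the template \emph{flexible}: use the algebraic structure not to pin down each template edge, but only to attach to each template ``slot'' a high-dimensional family of admissible $k$-sets, each satisfying the relevant algebraic relation; then fill the slots by a guided random-greedy (or Lov\'asz-Local-Lemma) process that only ever selects $k$-sets already present in $H_1$. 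Choosing $q$ and $a$ so that every slot admits $\gg n^{\varepsilon}$ admissible $k$-sets on $[n]$ makes this feasible, and one then checks that the resulting partial Steiner system still has Keevash's quasirandomness and --- crucially --- absorption properties, since these depend only on which algebraic relations the chosen edges satisfy, not on the edges being unique. The same flexibility device must be applied to the auxiliary ``swap'' edges required by each reconfiguration of the template, so that enough of them can be found in $H_3$.

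Assembling the pieces: with a flexible absorber $T \subseteq H_1$ and the $H_2$-nibble reducing the task to a small spread leftover $L$, invoke the reconfiguration property to replace $T$ by a partial Steiner system $T'$, drawing the swap-edges from $H_3$, so that $T'$ together with the nibble edges is an $\steinParams$-Steiner system on $[n]$; a union bound over the three stages yields the failure probability $n^{-\omegaone}$. To reiterate, the one genuinely new difficulty is reconciling the algebraic rigidity that powers absorption with the flexibility needed to embed the whole construction inside a polynomially sparse random hypergraph; the remaining stages are sparsity-robust reruns of known semi-random and absorption arguments, carried out with attention to keeping every failure probability super-polynomially small.
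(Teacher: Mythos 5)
Your proposal correctly identifies the central tension -- Keevash's global algebraic template has $\Theta(n^{k-1})$ edges and cannot be expected to lie inside a polynomially sparse $H$ -- but the ``flexible template'' device you propose to resolve it is left as a sketch, and it is precisely the step you yourself flag as the main obstacle. You suggest attaching to each template ``slot'' a family of admissible $k$-sets and filling the slots by a guided random-greedy or Local Lemma process restricted to $H_1$, then asserting that quasirandomness and absorption survive ``since these depend only on which algebraic relations the chosen edges satisfy.'' That assertion is not obviously true and is not verified; Keevash's absorption argument relies on delicate interactions among the template's edges that would need to be re-established for a template assembled by a conflict-avoiding process inside $H$. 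As it stands the proposal has a genuine gap exactly where the new difficulty lies.

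The paper avoids this gap by a different and simpler route. First, the template is not required to cover a constant fraction of facets: it is defined directly as $T = \{\,x_1\ldots x_k \in H : \pi(x_1)+\cdots+\pi(x_k)=0\,\}$ for a random embedding $\pi:[n]\hookrightarrow\F^*$, so $T\subseteq H$ automatically and covers only an $O(p)$-fraction of the facets. Second, rather than re-opening Keevash's machinery, the paper invokes Theorem~\ref{thm:Keevash} as a black box: after the nibble leaves a pseudo-random, $k$-divisible remainder $L$ of density $n^{-\delta}$, Keevash's theorem produces a $K_k$-decomposition $S$ of $L$ whose edges need \emph{not} lie in $H$. Third -- and this is the step your outline omits -- the non-$H$ edges of $S$ are then absorbed one at a time by small, local ``cross-polytope'' gadgets (Section~\ref{sssec:absorbers}) living entirely inside $H$; each such absorber has two alternate $K_k$-decompositions, one contained in $T$ and one in $H\setminus T$, and switching between them swaps out an unwanted edge. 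Showing w.v.h.p.\ that every $k$-set has $\Omega(n^{k-M\varepsilon})$ absorbers in $H$ (Proposition~\ref{pr:absorbers}) and that the random greedy selection of absorbers does not clash (Proposition~\ref{pr:RGA}) is where the work goes, and it is elementary linear algebra plus concentration, entirely avoiding any modification of Keevash's internal construction. So the paper's argument is not a sparse re-run of Keevash's method, but a reduction: the sparse-hypergraph problem is reduced, via a local absorption structure native to $H$, to the dense existence theorem already proved by Keevash.
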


\begin{rmk}
	It's natural to ask how $\varepsilon$ depends on $k$ in our proof, and how close $n^{-\varepsilon}$ is to the threshold function. We believe that even the best $\varepsilon$ obtainable by our methods is far from the truth; therefore we've chosen a simpler analysis over optimal results. As to the value of $\varepsilon$ we do obtain, it will be easier to discuss after proving Theorem \ref{thm:main}. We do so in Section \ref{sec:future}.
\end{rmk}

\begin{rmk}
	The proof of Theorem \ref{thm:main} is by a probabilistic algorithm that, given $H \sim \hnp, p = n^{-\varepsilon}$ for sufficiently small $\varepsilon$, finds an $\steinParams$-Steiner system contained in $H$ with high probability. The proof of Keevash's main theorems in \cite{Ke14} and \cite{Ke15} is similar. In fact, two steps of our algorithm, constructing the ``Template'' and ``Nibble'' (in Sections \ref{ssec:template} and \ref{ssec:nibble}, respectively), are ``lifted'' straight from Keevash's proof of the main theorem in \cite{Ke14}. Another step (described in Section \ref{ssec:keevash_application}) is a direct application of Keevash's Theorem (\cite{Ke15} Theorem 6.2, quoted below as Theorem \ref{thm:Keevash}) guaranteeing the existence of a $K_k$-decomposition of pseudo-random $\left(k-1\right)$-uniform hypergraphs (to be defined below).

	In light of the reliance on Keevash's techniques one might wonder if Theorem \ref{thm:main} isn't a more direct consequence of Keevash's work. In fact, it seems that a weaker version of Theorem \ref{thm:main}, with $n^{-\varepsilon}$ replaced by a function tending to $0$ at an unknown rate, can be derived straightforwardly from \cite{Ke14} Theorem 6.1. Alternatively, it seems likely that Theorem \ref{thm:main} can be proved by following the proof of \cite{Ke15} Theorem 6.2 almost to the letter, with the additional constraint that only $k$-sets from some $H \sim \mathcal{H}_k \left( n ; n^{-\varepsilon} \right)$ (with $\varepsilon$ sufficiently small) may be used in the proof (though we haven't verified this).
	
	Despite the points raised in the previous paragraph we feel it is better to give a self-contained proof, modulo known results. Since we use Keevash's theorem as a ``black box'', the reader need not verify any specific details of his proof, which is quite complex. Furthermore, while our proof indeed uses techniques from Keevash's proof it is significantly less complicated, in that it relies only on elementary linear algebra and measure concentration while avoiding the need to induct on $k$ and the topic of integral designs, which play prominent roles in \cite{Ke14} and \cite{Ke15}.
\end{rmk}

The remainder of this paper is organized as follows: The rest of section \ref{sec:intro} sets up notation and terminology, and quotes several measure concentration results. In section \ref{sec:proof_k4} we prove Theorem \ref{thm:main} for $k \leq 4$. In section \ref{sec:proof_k5} we explain how to modify the proof for the slightly more complicated case where $k \geq 5$. In section \ref{sec:generalization} we discuss generalizations to Latin squares, one-factorizations, and high-dimensional permutations. In section \ref{sec:future} we present some conjectures and related problems.

\subsection{Notation and Terminology}\label{ssec:notation}

Let $\hkn$ be the set of $k$-uniform hypergraphs on vertex set $\left[ n \right]$. Let $\hnp$ be the distribution on $\hkn$ where each edge is included with probability $p$, independently. We identify ${H \in \hkn}$ with its edge set, viewing $H$ as a subset of $\binom{\left[n\right]}{k}$. We write $d \left( H \right) = \frac{\left| H \right|}{\binom{n}{k}}$. We write $K_k^n$ for the complete $k$-uniform hypergraph on $\left[ n \right]$, i.e.\ $K_k^n = \binom{\left[ n \right]}{k}$. Borrowing from the language of polytopes, if $n$ and $k$ are clear (for example, if we have fixed some $H \in H_k \left(n\right)$), we refer to an element of $\binom{\left[n\right]}{k-1}$ as a \termdef{facet}.

Although the edges of a hypergraph are sets, we sometimes need to fix an ordering of the elements of an edge $x = \left\{ x_1 , x_2 , \ldots , x_k \right\} \in K_k^n$. In such circumstances we write $x = x_1 x_2 \ldots x_k$. This is to be understood as an arbitrary ordering that, once chosen, remains fixed.

For $H \in H_{k-1} \left(n\right)$, $K_k \left( H \right) \coloneqq \left\{ e \in \binom{\left[n\right]}{k} : \binom{e}{k-1} \subseteq H \right\} \in H_k \left(n\right)$. For example, if $G$ is a graph, $K_3 \left(G\right)$ is the set of induced triangles. For $H \in \hkn$,
$$
{K_{k-1} \left( H \right) = \cup_{e \in H} \binom{e}{k-1} \in H_{k-1} \left( n \right)}
$$
is the set of facets contained in the edges of $H$, and is referred to as the set of $H$'s facets.

A \termdef{$K_k$-decomposition} of $G \in H_{k - 1} \left( n \right)$ is a graph $H \subseteq K_k \left( G \right)$ s.t.\ every facet in $G$ is contained in precisely one edge in $H$. There are necessary divisibility conditions $G$ must satisfy for it to have a $K_k$-decomposition. Specifically, for $S \subseteq \left[ n \right]$, we write $G \left( S \right) = \left\{ f \subseteq \left[ n \right] \setminus S : S \cup f \in G \right\}$. If $S \in \binom{\left[ n \right]}{i}$, $0 \leq i \leq k-1$, then, on the one hand, there are $\left| G \left( S \right) \right|$ edges containing $S$, and on the other hand each $k$-set containing $S$ contains $k-i$ edges containing $S$. Therefore, if a $K_k$-decomposition of $G$ exists, we must have $k-i | \left| G \left( S \right) \right|$. We say that $G$ is \termdef{$k$-divisible} if for all $0 \leq i \leq k-1$ and all $S \in \binom{\left[ n \right]}{i}$, $\binom{k-i}{k-1-i}$ divides $\left| G \left( S \right) \right|$.

A $K_k$-decomposition of $K_{k-1}^n$ is precisely an \termdef{$\steinParams$-Steiner system}. In this case, the conditions for $k$-divisibility translate to the arithmetic constraints
$$
{\forall 0 \leq i \leq k-1, k-i | \binom{n-i}{k-1-i}}
$$
If $n$ satisfies these conditions we say it is \termdef{$k$-divisible}. A consequence of Keevash's main Theorem in \cite{Ke14} is that $N_k$, the set of numbers $n$ for which $\steinParams$-Steiner systems exist, is all but finitely many of the $k$-divisible numbers. If $n$ and $k$ are clear from context, we refer to an $\steinParams$-Steiner system as simply a \termdef{Steiner system} or a \termdef{design}.

$S,T \in K_k^n$ are \termdef{facet-disjoint} if $K_{k-1} \left( S \right) \cap K_{k-1} \left( T \right) = \emptyset$. We say that $S$ is \termdef{facet-disjoint} if for all $x,y \in S$, $x \neq y \implies \left| x \cap y \right| < k - 1$. We sometimes call a facet-disjoint set a \termdef{partial design}. A facet $f$ is \termdef{covered} by a partial design $S$ if there exists some $e \in S$ containing $f$.

If $G \in \hkn$, $\F$ is a field, and $\pi : \left[ n \right] \rightarrow \F$, we denote the set 
$$
\left\{ \left( \pi x_1 , \pi x_2 , \ldots , \pi x_k \right)^T : \left\{ x_1 x_2 \ldots x_k \right\} \in G \right\} \subseteq \F^k
$$
by $\pi \left( G \right)$.

A sequence of events $\left\{ E_n \right\}_{n \in \N}$ occurs \termdef{with very high probability} (\termdef{w.v.h.p.}) if ${\Prob \left[ E_n \right] = 1 - n^{-\omegaone}}$. If we have polynomially many (sequences of) events, each occurring w.v.h.p., then their intersection also occurs w.v.h.p. For the most part we will abuse terminology and refer to a single event as occurring w.v.h.p., where the sequential structure and the dependence on $n$ may be inferred.

For $a,b \in \R$ we write $a \pm b$ to indicate a quantity in the interval $\left[ a - \left|b\right| , a + \left|b\right| \right]$.

\subsection{Measure Concentration}

Throughout the proof we'll calculate the expectations of various random variables and then argue that w.v.h.p.\ they are, in fact, close to their mean. To this end we collect several concentration inequalities.

\begin{lem}[\cite{kwan2016almost} Theorem 2.11]\label{lem:boolean_concentration}
	Let $f : \left\{ 0, 1 \right\}^N \rightarrow \R$ be a Boolean function and $C \in \R$ s.t.\ for all $x,y \in \left\{ 0,1 \right\}^N$ that differ in exactly one coordinate, $\left| f \left( x \right) - f \left( y \right) \right| \leq C$ (i.e.\ $f$ is $C$-Lipschitz). Let $X = \left( X_1 , X_2 , \ldots , X_N \right)$ be a sequence of independent Bernoulli variables with mean $q \in \left[ 0,1 \right]$. Then for all $t > 0$:
	$$
	\Prob \left[ \left| f \left( X \right) - \E f\left( X \right) \right| > t \right] \leq \exp \left( - \frac{t^2}{4 C^2 N q + 2 C t} \right)
	$$
\end{lem}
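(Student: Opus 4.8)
The plan is to write $f(X) - \E f(X)$ as a sum of martingale differences and then invoke a Bernstein-type (rather than merely Azuma/McDiarmid-type) tail bound; the whole game is to see that the $C$-Lipschitz hypothesis, combined with the possibly tiny bias $q$, forces the conditional variances of those differences to be of order $q C^2$ instead of just $C^2$. Concretely, I would take the Doob martingale $Z_i = \E\!\left[f(X) \mid X_1, \dots, X_i\right]$, so that $Z_0 = \E f(X)$ and $Z_N = f(X)$, and set $D_i = Z_i - Z_{i-1}$, giving $f(X) - \E f(X) = \sum_{i=1}^N D_i$ relative to the natural filtration.

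Next comes the per-coordinate analysis. Fix $i$ and condition on $X_1, \dots, X_{i-1}$; write $g(b) = \E\!\left[f(X) \mid X_1, \dots, X_{i-1}, X_i = b\right]$ for $b \in \left\{0,1\right\}$. Since flipping the $i$-th coordinate changes $f$ by at most $C$ and the coordinates $X_{i+1}, \dots, X_N$ are averaged against the same distribution in $g(0)$ and in $g(1)$, the $C$-Lipschitz property gives $\left| g(1) - g(0) \right| \le C$. A one-line computation from $Z_{i-1} = (1-q) g(0) + q g(1)$ and $Z_i = g(X_i)$ shows that $D_i = -q\left(g(1)-g(0)\right)$ on $\{X_i = 0\}$ and $D_i = (1-q)\left(g(1)-g(0)\right)$ on $\{X_i = 1\}$. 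Hence $\left|D_i\right| \le C$ pointwise and, crucially,
\[
\E\!\left[D_i^2 \mid X_1, \dots, X_{i-1}\right] = q(1-q)\left(g(1)-g(0)\right)^2 \le q C^2,
\]
so the predictable quadratic variation $\sum_{i=1}^N \E\!\left[D_i^2 \mid X_1, \dots, X_{i-1}\right]$ is at most $N q C^2$, deterministically.

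To finish, I would run the usual exponential-moment computation for a martingale with increments bounded by $C$ and predictable variation bounded by $N q C^2$. Iterating the one-step estimate $\E\!\left[e^{\lambda D_i} \mid X_1, \dots, X_{i-1}\right] \le \exp\!\left(\left(e^{\lambda C} - 1 - \lambda C\right) q\right)$, valid for $\lambda \ge 0$ for a mean-zero variable with $\left|D_i\right| \le C$ (Bennett's lemma, after inserting the variance bound above), yields $\E\, e^{\lambda (f(X) - \E f(X))} \le \exp\!\left(\left(e^{\lambda C}-1-\lambda C\right) N q\right)$, and a Chernoff bound with the optimal choice of $\lambda$ produces a Bernstein tail of the shape $\exp\!\left(-t^2 / \left(2 N q C^2 + \Theta(C t)\right)\right)$; applying the same argument to $-f$ and absorbing the union-bound factor into the (deliberately loose) constants gives the two-sided estimate with the stated denominator $4 C^2 N q + 2 C t$. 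Equivalently, one may simply cite Freedman's inequality with variance proxy $N q C^2$ and increment bound $C$.

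The hard part — really, the only part with content — is the conditional-variance bound $\E\!\left[D_i^2 \mid X_1, \dots, X_{i-1}\right] \le q C^2$. A plain bounded-differences argument only controls the \emph{range} of $D_i$ by $C$, hence its variance by $C^2/4$, giving the much weaker $\exp\!\left(-\Omega\!\left(t^2/(N C^2)\right)\right)$ with no dependence on $q$; that bound would be useless here, since in the intended application $q = p$ is a small negative power of $n$. The improvement comes from the elementary observation that changing coordinate $i$ ``happens'' only on an event of probability $\approx q$, so $D_i$ is of order $q C$ on an event of probability $1-q$ and only of order $C$ on the complementary event of probability $q$ — which is exactly what makes the variance scale with $q$.
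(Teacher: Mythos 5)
The paper does not prove this lemma; it is quoted verbatim from \cite{kwan2016almost}, Theorem~2.11, so there is no in-paper argument to compare against. Judged on its own merits, your argument is the right one, and its substance is correct: the Doob martingale decomposition, the per-coordinate identity $D_i = -q\bigl(g(1)-g(0)\bigr)$ on $\{X_i=0\}$ and $D_i=(1-q)\bigl(g(1)-g(0)\bigr)$ on $\{X_i=1\}$, the resulting conditional-variance bound
$\E\!\left[D_i^2 \mid X_1,\dots,X_{i-1}\right] = q(1-q)\bigl(g(1)-g(0)\bigr)^2 \le qC^2$,
and a Bennett/Freedman exponential computation. You have also correctly identified that the conditional-variance bound is the only non-routine step, and that a plain bounded-differences argument (variance $\le C^2/4$ per coordinate) would be useless at the densities $q=p=n^{-\varepsilon}$ in this paper.

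The one soft spot is the final sentence about ``absorbing the union-bound factor into the (deliberately loose) constants.'' That does not quite work as stated: the two-sided Chernoff/Bennett argument yields $2\exp\!\bigl(-t^2/(2NqC^2+2Ct/3)\bigr)$, and as $t\to 0$ the gap $\frac{t^2}{2NqC^2+2Ct/3}-\frac{t^2}{4NqC^2+2Ct}$ tends to $0$, so the $\ln 2$ cannot be swallowed uniformly by merely enlarging the denominator. Either the quoted inequality should carry a leading factor of $2$ (a plausible transcription slip in the present paper), or one needs a genuinely different optimization to remove it. This is a matter of constants only; it does not affect the idea of your proof or any application of the lemma in the paper, all of which are insensitive to a factor of $2$.
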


\begin{lem}[\cite{Ke14} Theorem 2.10]\label{lem:azuma_concentration}
	Let $f : \left\{ 0,1 \right\} ^N \rightarrow \R$ be a Boolean function and $b_1 , b_2 , \ldots , b_N >0$ where if $x,y \in \left\{ 0,1 \right\} ^N$ differ only on the $i$th coordinate, ${\left| f \left( x \right) - f \left( y \right) \right| \leq b_i}$. Let $X_1 , X_2 , \ldots , X_N$ be independent Bernoulli random variables. Then for every $t > 0$:
	$$
	\Prob \left[ \left| f \left( X \right) - \E f \left( X \right) \right| > t \right] \leq 2 \exp \left( - \frac{t^2}{2 \sum_{i=1}^N b_i^2} \right)
	$$
\end{lem}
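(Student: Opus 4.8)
This is the bounded‑differences (McDiarmid) inequality, and the plan is to derive it from the Azuma--Hoeffding martingale bound applied to the Doob martingale of $f(X)$. Write $\mathcal{F}_i = \sigma \left( X_1, \ldots, X_i \right)$ and set $Z_i = \E \left[ f(X) \mid \mathcal{F}_i \right]$ for $0 \leq i \leq N$, so that $Z_0 = \E f(X)$, $Z_N = f(X)$, and $\left( Z_i \right)_{i=0}^N$ is a martingale with respect to $\left( \mathcal{F}_i \right)$. It suffices to show that the increments $D_i := Z_i - Z_{i-1}$ satisfy $\left| D_i \right| \leq b_i$ almost surely, and then to run the standard exponential‑moment argument on $Z_N - Z_0 = \sum_{i=1}^N D_i$.

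For the increment bound, introduce $g_i \left( x_1, \ldots, x_i \right) := \E \left[ f(X) \mid X_1 = x_1, \ldots, X_i = x_i \right]$; by independence of the $X_j$ this is the average of $f \left( x_1, \ldots, x_i, X_{i+1}, \ldots, X_N \right)$ over the remaining coordinates. Then $Z_i = g_i \left( X_1, \ldots, X_i \right)$ and, again using independence, $Z_{i-1} = \E_{X_i'} \left[ g_i \left( X_1, \ldots, X_{i-1}, X_i' \right) \right]$ where $X_i'$ is an independent copy of $X_i$. Let $g_i^{(0)}, g_i^{(1)}$ denote the two values of $g_i \left( X_1, \ldots, X_{i-1}, \cdot \right)$; the bounded‑differences hypothesis gives $\left| g_i^{(1)} - g_i^{(0)} \right| \leq b_i$, since $g_i^{(1)} - g_i^{(0)}$ is an average over $X_{i+1}, \ldots, X_N$ of a difference of $f$ evaluated at two inputs that differ only in coordinate $i$. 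Consequently, writing $q_i = \Prob \left[ X_i = 1 \right]$, the variable $D_i = g_i \left( X_1, \ldots, X_i \right) - \left( q_i g_i^{(1)} + (1-q_i) g_i^{(0)} \right)$ takes only the values $(1-q_i) \left( g_i^{(1)} - g_i^{(0)} \right)$ and $-q_i \left( g_i^{(1)} - g_i^{(0)} \right)$, according to the value of $X_i$; both are bounded in absolute value by $b_i$, and this also re‑confirms $\E \left[ D_i \mid \mathcal{F}_{i-1} \right] = 0$.

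Now apply Hoeffding's lemma: any random variable $D$ with $\E \left[ D \mid \mathcal{F}_{i-1} \right] = 0$ and $\left| D \right| \leq b_i$ (hence taking values in an interval of length $2 b_i$) satisfies $\E \left[ e^{\lambda D} \mid \mathcal{F}_{i-1} \right] \leq \exp \left( \tfrac{1}{2} \lambda^2 b_i^2 \right)$ for all $\lambda \in \R$; this is the usual one‑line consequence of convexity of $t \mapsto e^{\lambda t}$ on $\left[ -b_i, b_i \right]$ together with a Taylor estimate of the resulting function of $\lambda$. Applying this with $D = D_i$ and peeling off the conditional expectations one at a time via the tower property yields $\E \left[ \exp \left( \lambda \sum_{i=1}^N D_i \right) \right] \leq \exp \left( \tfrac{1}{2} \lambda^2 \sum_{i=1}^N b_i^2 \right)$. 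Markov's inequality then gives, for every $\lambda > 0$, $\Prob \left[ Z_N - Z_0 > t \right] \leq \exp \left( \tfrac{1}{2} \lambda^2 \sum_{i=1}^N b_i^2 - \lambda t \right)$; optimizing at $\lambda = t / \sum_{i=1}^N b_i^2$ gives $\Prob \left[ f(X) - \E f(X) > t \right] \leq \exp \left( - t^2 / \left( 2 \sum_{i=1}^N b_i^2 \right) \right)$. Running the same argument with $-f$ in place of $f$ and taking a union bound produces the factor $2$ and completes the proof.

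\textbf{Anticipated obstacle.} There is essentially no serious difficulty here: this is a textbook inequality and the martingale machinery is entirely routine. The only point that requires a little care is the verification that $\left| D_i \right| \leq b_i$, where one must use the independence of the $X_j$ to rewrite both conditional expectations defining $D_i$ as averages over a \emph{common} block of coordinates, so that the bounded‑differences hypothesis can be invoked pointwise on the integrand; everything after that is the standard Azuma--Hoeffding computation.
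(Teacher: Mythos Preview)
Your argument is correct and is the standard derivation of the bounded-differences (McDiarmid) inequality via the Doob martingale and Azuma--Hoeffding. The paper itself does not prove this lemma at all: it is simply quoted as a known concentration result from \cite{Ke14}, so there is no ``paper's own proof'' to compare against beyond noting that your approach is exactly the textbook one such a citation points to.
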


For two sets $X,Y$ we write $Sym \left( X \right)$ for the group of bijections from $X$ to itself and $\mathcal{I} \left( X,Y \right)$ for the set of injections from $X$ to $Y$. The next Lemma is an immediate consequence of \cite{Ke14} Lemma 2.13.

\begin{lem}\label{lem:random_injection}
	Let $X,Y$ be finite sets and let $C > 0$. Suppose $f:\mathcal{I} \left( X,Y \right) \rightarrow \R$ has the property that for every $\pi \in \mathcal{I}$ and every transposition $\tau \in Sym \left( Y \right)$, ${\left| f \left( \pi \right) - f \left( \tau \circ \pi \right) \right| \leq C}$. Let $\pi$ be a uniformly random element of $\mathcal{I} \left( X,Y \right)$. Then for every $t > 0$:
	$$
	\Prob \left[ \left| f \left( \pi \right) - \E f \left( \pi \right) \right| > t \right] \leq 2 \exp \left( - \frac{t^2}{2 \left| Y \right| C^2} \right)
	$$
\end{lem}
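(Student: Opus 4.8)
The plan is to deduce the statement from Keevash's concentration inequality for functions of uniformly random permutations, \cite{Ke14} Lemma 2.13, by the standard device of realizing a uniform injection as the restriction of a uniform bijection. First I would dispose of the degenerate case: if $\left| X \right| > \left| Y \right|$ then $\mathcal{I} \left( X , Y \right) = \emptyset$ and there is nothing to prove, so assume $\left| X \right| \leq \left| Y \right|$. Fix, once and for all, an arbitrary injection $\iota : X \rightarrow Y$, and for $\sigma \in Sym \left( Y \right)$ define $\tilde f \left( \sigma \right) \coloneqq f \left( \sigma \circ \iota \right)$. Since $\sigma \circ \iota \in \mathcal{I} \left( X , Y \right)$ for every $\sigma$, and since for a fixed $\pi \in \mathcal{I} \left( X , Y \right)$ the solutions $\sigma$ of $\sigma \circ \iota = \pi$ are exactly the bijections extending $\pi \circ \iota^{-1} : \im \iota \rightarrow \im \pi$, of which there are $\left( \left| Y \right| - \left| X \right| \right)!$, the image of a uniformly random $\sigma \in Sym \left( Y \right)$ under $\sigma \mapsto \sigma \circ \iota$ is a uniformly random element of $\mathcal{I} \left( X , Y \right)$. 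Hence $\tilde f \left( \sigma \right)$ and $f \left( \pi \right)$ are identically distributed; in particular $\E \tilde f \left( \sigma \right) = \E f \left( \pi \right)$, so it suffices to prove the asserted tail bound for $\tilde f$.

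Next I would verify that $\tilde f$ inherits the Lipschitz hypothesis in the form fed into \cite{Ke14} Lemma 2.13. For any transposition $\tau \in Sym \left( Y \right)$ and any $\sigma \in Sym \left( Y \right)$ we have $\tau \circ \sigma \circ \iota = \tau \circ \left( \sigma \circ \iota \right)$ with $\sigma \circ \iota \in \mathcal{I} \left( X , Y \right)$, so the hypothesis on $f$ gives $\left| \tilde f \left( \sigma \right) - \tilde f \left( \tau \circ \sigma \right) \right| = \left| f \left( \sigma \circ \iota \right) - f \left( \tau \circ \left( \sigma \circ \iota \right) \right) \right| \leq C$. Thus $\tilde f$ changes by at most $C$ when its argument is composed on the left with an arbitrary transposition of $Y$. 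Generating the uniform $\sigma \in Sym \left( Y \right)$ by a Fisher--Yates-type sequence of at most $\left| Y \right|$ transpositions and applying the martingale bound of \cite{Ke14} Lemma 2.13 with each per-step constant equal to $C$ then yields $\Prob \left[ \left| \tilde f \left( \sigma \right) - \E \tilde f \left( \sigma \right) \right| > t \right] \leq 2 \exp \left( - \frac{t^2}{2 \left| Y \right| C^2} \right)$, which is exactly the claim.

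There is no real obstacle here beyond bookkeeping; the proof is genuinely immediate once the right statement is quoted. The one point that merits a moment's attention is that a transposition must be allowed to act on all of $Y$, not merely on $\im \pi$: this is harmless because such a transposition still maps $\mathcal{I} \left( X , Y \right)$ into itself (altering at most the two coordinates of $\pi$ whose values lie in the transposed pair, or none at all), so the single constant $C$ from the hypothesis is precisely what enters the permutation inequality, and the number of elementary swaps needed to build a uniform permutation of $Y$ is what produces the factor $\left| Y \right|$ in the denominator of the exponent.
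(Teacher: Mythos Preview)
Your proposal is correct and matches the paper's approach: the paper simply states that the lemma is an immediate consequence of \cite{Ke14} Lemma 2.13, and your reduction via extending a uniform injection to a uniform permutation of $Y$ is precisely the intended ``immediate'' argument. There is nothing to add.
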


\begin{lem}[\cite{Ke14} Lemma 2.7]\label{lem:stochastic_bound}
	Let $ \mathcal{F}_0 \subseteq \mathcal{F}_1 \subseteq \ldots \subseteq \mathcal{F}_n $ be a filtration of a finite probability space. Let $Y_1 , Y_2 , \ldots , Y_n$ be a sequence of random variables and $C \in \R$ s.t.\ for every $i$, $Y_i$ is $\mathcal{F}_i$-measurable, $\left| Y_i \right| \leq C$, and $\E \left[ \left| Y_i \right| \given \mathcal{F}_{_i-1} \right] \leq \mu_i$. Suppose $Y = \sum_{i=1}^n Y_i$ and $\mu > \sum_{i=1}^n \mu_i$. Then for all $c > 0$:
	$$
	\Prob \left[ \left| Y \right| > \left( 1 + c \right) \mu \right] \leq 2 \exp \left( - \frac{\mu c^2}{2 \left( 1 + 2c \right) C} \right)
	$$ 
\end{lem}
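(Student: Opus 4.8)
The plan is to prove this Freedman-type bound by the exponential moment method, handling the two tails symmetrically. First I would reduce to the normalized case $C=1$: replacing each $Y_i$ by $Y_i/C$, each $\mu_i$ by $\mu_i/C$, and $\mu$ by $\mu/C$ leaves every hypothesis intact (the bound $\left| Y_i \right|\le C$ becomes $\left| Y_i \right|\le 1$, the bound $\E[\left| Y_i \right|\mid\mathcal{F}_{i-1}]\le\mu_i$ rescales to the analogous statement, and $\mu>\sum_i\mu_i$ is scale-invariant), while $\Prob[\left| Y \right|>(1+c)\mu]$ is unchanged and the target bound becomes the asserted inequality with $C=1$. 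So from now on assume $\left| Y_i \right|\le 1$ for all $i$; write $S_j=\sum_{i=1}^j Y_i$ and $M_j=\sum_{i=1}^j\mu_i$.

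Next I would construct an exponential supermartingale. Fix $h>0$ and let $Z_j=\exp\!\big(hS_j-(e^h-1)M_j\big)$, so $Z_0=1$. From the elementary fact that $x\mapsto(e^{hx}-1-hx)/x^2$ is nondecreasing on $\R$, for $x\le 1$ one gets $e^{hx}\le 1+hx+(e^h-1-h)x^2$; applying this with $x=Y_i$ and taking conditional expectation gives
\[
\E\!\left[e^{hY_i}\mid\mathcal{F}_{i-1}\right]\le 1+h\,\E[Y_i\mid\mathcal{F}_{i-1}]+(e^h-1-h)\,\E[Y_i^2\mid\mathcal{F}_{i-1}].
\]
Now $\big|\E[Y_i\mid\mathcal{F}_{i-1}]\big|\le\E[\left| Y_i \right|\mid\mathcal{F}_{i-1}]\le\mu_i$, and since $\left| Y_i \right|\le 1$ also $\E[Y_i^2\mid\mathcal{F}_{i-1}]\le\E[\left| Y_i \right|\mid\mathcal{F}_{i-1}]\le\mu_i$; hence the right-hand side is at most $1+(e^h-1)\mu_i\le\exp\!\big((e^h-1)\mu_i\big)$. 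This yields $\E[Z_j\mid\mathcal{F}_{j-1}]\le Z_{j-1}$, so $\E Z_n\le 1$. Since $M_n<\mu$ and $e^h-1>0$, on the event $\{S_n>(1+c)\mu\}$ we have $Z_n>\exp\!\big(h(1+c)\mu-(e^h-1)\mu\big)$, so Markov's inequality gives $\Prob[S_n>(1+c)\mu]\le\exp\!\big((e^h-1-h(1+c))\mu\big)$. Choosing $h=\ln(1+c)$, which minimizes the exponent, gives $\Prob[S_n>(1+c)\mu]\le\exp\!\big(\mu(c-(1+c)\ln(1+c))\big)$. Repeating the argument verbatim with $-Y_i$ in place of $Y_i$ (still bounded by $1$, still with $\E[\left| -Y_i \right|\mid\mathcal{F}_{i-1}]\le\mu_i$) bounds the lower tail by the same quantity, and a union bound gives $\Prob[\left| Y \right|>(1+c)\mu]\le 2\exp\!\big(\mu(c-(1+c)\ln(1+c))\big)$.

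It then remains to check the scalar inequality $c-(1+c)\ln(1+c)\le-\tfrac{c^2}{2(1+2c)}$ for $c>0$, equivalently $(1+c)\ln(1+c)-c-\tfrac{c^2}{2(1+2c)}\ge 0$; this follows since both sides vanish at $c=0$ and the derivative of the difference is nonnegative (alternatively, Taylor's theorem with remainder does it, since the two sides agree to order $c^2$). Plugging this in and undoing the initial rescaling by $C$ yields the claimed bound $2\exp\!\big(-\mu c^2/(2(1+2c)C)\big)$. I do not expect a genuine obstacle here — this is the standard Freedman/Bernstein argument — but the points needing care are carrying the rescaling consistently through all four hypotheses and verifying the final scalar inequality; it is worth noting that the reason the clean factor $e^h-1$ (rather than a term quadratic in $h$) appears is precisely that the single hypothesis $\E[\left| Y_i \right|\mid\mathcal{F}_{i-1}]\le\mu_i$ simultaneously controls the conditional mean and the conditional second moment.
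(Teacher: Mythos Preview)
Your argument is correct and is exactly the standard exponential-moment (Freedman/Bennett) proof of this inequality. Note, however, that the paper does not actually give its own proof of this lemma: it is quoted verbatim as \cite{Ke14} Lemma~2.7 and used as a black box, so there is nothing in the paper to compare your approach against. Your write-up would serve perfectly well as a self-contained proof; the only spot a referee might ask you to expand is the final scalar inequality $(1+c)\ln(1+c)-c\ge \tfrac{c^2}{2(1+2c)}$, where the one-line derivative check you describe (both sides vanish at $c=0$, and $\ln(1+c)\ge \tfrac{c(1+c)}{(1+2c)^2}$ since $(1+2c)^3\ge 1+c$) is easy to supply.
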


\section{Proof of Theorem \ref{thm:main} for $k \leq 4$}\label{sec:proof_k4}

The $k=2$ case follows from the classical result of Erd\H{o}s and R\'enyi so we may assume $k \in \left\{ 3,4 \right\}$.

Throughout the section we'll assume w.l.o.g.\ that $p = n^{-\varepsilon}$. Wherever necessary we assume $n \in N_k$ is arbitrarily large and $\varepsilon$ is arbitrarily small (but independent of $n$). Asymptotic terms refer to fixed $k$ and $n$ tending to infinity.

Let $H \sim \hnp$. Our goal is to find a $K_k$-decomposition of $K_{k-1}^n$ contained in $H$. We do so by by means of a probabilistic algorithm proceeding roughly as follows, where each step can be completed w.v.h.p.:

\begin{itemize}
	\item \textbf{Template} (Section \ref{ssec:template}): Set aside a partial design $T \subseteq H$ that has desirable switching properties. Later in the proof we'll construct a design that may use a small number of edges not in $H$. These edges will be replaced by ``algebraic absorbers'' induced by the template.
	
	\item \textbf{Nibble} (Section \ref{ssec:nibble}): Using a random greedy algorithm construct a partial design $N \subseteq H$ that is facet-disjoint from $T$ and covers almost all facets.
	
	\item \textbf{Applying Keevash's Theorem} (Section \ref{ssec:keevash_application}): Let $L \in H_{k-1}\left(n\right)$ be the collection of facets uncovered by $T \cup N$. Apply Keevash's existence theorem from \cite{Ke15} to obtain a $K_k$-decomposition ${S \subseteq K_k \left( L \right)}$ of $L$. Typically, $S$ uses edges that aren't in $H$.
	
	\item \textbf{Absorbing $S$} (Section \ref{ssec:absorbing}): Using algebraic absorbers replace the edges in $S$ (as well as some edges from $T$) with edges from $H$, yielding a $K_k$-decomposition of $K_{k-1}^n$ that is contained in $H$.
	
\end{itemize}

\subsection{Template}\label{ssec:template}

Let $2n \leq 2^m \leq 4n$. Let $\gamma = \frac{n}{2^m - 1} = \Theta \left( 1 \right)$. Let $\F$ be the field with $2^m$ elements (so that $\left| \F \right| = \Theta \left( n \right)$). We remark that since $\F$ has characteristic $2$, if $u,v$ are elements in a vector space over $\F$ then $v = -v$ and $u+v = 0 \iff u = v$. Furthermore, if $V$ is a vector space over $\F$ of dimension $d = O \left( 1 \right)$ then $\left| V \right| = \Theta \left( n^d \right)$. If $T : V \rightarrow U$ is a linear map, then $\left| \ker T \right| \left| \im T \right| = \Theta \left( n^d \right)$. Let $\F^* = \F \setminus \left\{ 0 \right\}$.

Let $\pi : \left[n\right] \hookrightarrow \F^*$ be a uniformly random injection. For the remainder of the proof we identify $x \in \left[ n \right]$ with $\pi \left(x\right)$, and view $\left[ n \right]$ as a subset of $\F^*$. We define the \termdef{template} $T$:
$$
T = \left\{ x_1 x_2 \ldots x_k \in H : x_1 + x_2 + \ldots + x_k = 0 \right\}
$$

The edges in $T$ are called \termdef{algebraic}. A facet $f \in K_{k-1}^n$ is \termdef{algebraic} if for some (unique) $x \in \left[n\right]$, $f \cup \left\{ x \right\}$ is algebraic.

The remainder of Section \ref{ssec:template} establishes properties of $T$ and $H$ that hold w.v.h.p. Section \ref{sssec:cp} introduces cross-polytopes - small hypergraphs that form the building blocks of the absorbers. Section \ref{sssec:absorbers} introduces the absorbers themselves - small subhypergraphs of $H$ whose vertices satisfy certain linear constraints. Section \ref{sssec:linear_operators} defines families of linear operators related to the absorbers and establishes their salient properties. These operators provide a convenient framework within which we may study the interactions between absorbers. Finally, Section \ref{sssec:template_properties} establishes the properties of $T$ and $H$ necessary to continue with the proof.

\subsubsection{Cross-Polytopes}\label{sssec:cp}

Let $x_1 , x_2 ,\ldots ,x_k , a_1 , a_2 ,\ldots , a_k \in \left[ n \right]$ be distinct vertices. The \termdef{cross-polytope spanned by} $x_1 , x_2 ,\ldots ,x_k , a_1 , a_2 ,\ldots , a_k$ and denoted $C_{x_1 , x_2 ,\ldots ,x_k , a_1 , a_2 ,\ldots , a_k}$ is the $k$-uniform hypergraph on vertex set $\left\{ x_1 , x_2 ,\ldots ,x_k , a_1 , a_2 ,\ldots , a_k \right\}$ with the edges:
$$
e_I \coloneqq \left\{ x_i : i \in \left[ k \right] \setminus I \right\} \cup \left\{ a_i : i \in I \right\}, I \subseteq \left[ k \right]
$$
In other words, the edges consist of the sets containing exactly one of $x_i$ and $a_i$ for each $i$. We refer to the edges $\left\{ e_I \right\}_{I \subseteq \left[ k \right]}$ as either \termdef{even} or \termdef{odd}, depending on the parity of $\left| I \right|$. Observe that both
$$
C_{x_1 , x_2 ,\ldots ,x_k , a_1 , a_2 ,\ldots , a_k}^{even} =
\left\{ e_I : I \subseteq \left[ k \right], \left|I\right|\ even \right\}
$$
\and
$$
C_{x_1 , x_2 ,\ldots ,x_k , a_1 , a_2 ,\ldots , a_k}^{odd} =
\left\{ e_I : I \subseteq \left[ k \right], \left|I\right|\ odd \right\}
$$
are $K_k$-decompositions of $K_{k-1} \left( C_{x_1 , x_2 ,\ldots ,x_k , a_1 , a_2 ,\ldots , a_k} \right)$.

If  ${x = x_1 x_2 \ldots x_k \in K_k^n}$ is non-algebraic with the property that every facet in $x$ is algebraic, then there exist $a = a_1 , a_2 , \ldots ,a_k \in K_k^n$ s.t.\ for every $i \in \left[ k \right]$, ${x_1 x_2 \ldots x_{i-1} a_i x_{i+1} \ldots x_k \in T}$. Observe that $a_i = \sum_{j=1}^n x_j + x_i$ and that the $a_i$s are distinct. Indeed, if $i \neq j$:

\begin{equation*}
\begin{gathered}
a_i + a_j = 2 \sum_{\ell = 1}^n x_\ell + x_i + x_j = x_i + x_j \neq 0
\end{gathered}
\end{equation*}

We call $C_{x,a}$ the \termdef{associated cross-polytope} of $x$, and denote it by $C_x$. We refer to the vertices $a_1 , a_2 , \ldots , a_k$ as the \termdef{algebraic vertices} of $C_x$.  Observe that $C_x^{odd} \subseteq T$; we call this the \termdef{algebraic decomposition} of $C_x$. We call $C_x^{even}$ the \termdef{non-algebraic decomposition} of $C_x$.

Denote by $\mathcal{C} \subseteq K_k^n$ the set of $x \in K_k^n$ s.t.\ $x$ is non-algebraic but every facet in $x$ is (so that the associated cross-polytope $C_x$ exists), and $C_x^{even} \setminus \left\{ x \right\} \subseteq H$. Note that every edge in the algebraic decomposition $C_x^{odd}$ is necessarily in $T \subseteq H$; hence $C_x^{odd} \subseteq H$.

\subsubsection{Absorbers}\label{sssec:absorbers}

In Section \ref{sssec:template_properties} we'll show that w.v.h.p.\ (over the choice of $H$ and $\pi$) almost every $s \in K_k^n$ can be embedded in many absorbers. These have the following structure:

Let $x = x_1 x_2 \ldots x_k \in K_k^n$. An edge $a = a_1 , a_2 , \ldots , a_k \in K_k^n$ spans an \termdef{absorber} for $x$ if:
\begin{itemize}
	\item $x \cap a = \emptyset$.
	
	\item $C_{ x , a}^{odd} \subseteq H$.
	
	\item $C_{ x , a}^{even} \setminus \left\{ x \right\} \subseteq \mathcal{C}$. In particular, if $\emptyset \neq I \subseteq \left[ k \right]$ has even cardinality, then $e_I \in C_{x,a}$ has an associated cross polytope $C_{e_I}$. Furthermore, all edges in $C_{e_I}$ are contained in $H$, with the possible exception of $e_I$ itself.	
	
	\item $x_1 , x_2 , \ldots , x_k , a_1 , a_2 , \ldots , a_k$, and the vertices of all the associated cross-polytopes $C_{e}$, where $x \neq e \in C_{x,a}^{even}$, are all distinct. This ensures that all the associated cross-polytopes are facet disjoint.
\end{itemize}
	
The absorber spanned by $x,a$ is the $k$-uniform hypergraph
$$
C_{x,a}^{odd} \cup \bigcup_{e \in C_{x,a}^{even} \setminus \left\{ x \right\}} \left( C_{e_I} \setminus \left\{ e_I \right\} \right)
$$
The image one should keep in mind is that an absorber for $x$ is a cross-polytope containing $x$ in which on every even edge (with the exception of $x$) lies an associated cross-polytope.

Denote by $\mathcal{A}_x$ the set of absorbers for $x$.
	
If $A \in \mathcal{A}_x$ is spanned by $a \in K_k^n$, the \termdef{algebraic decomposition} of $A$ is the hypergraph:
$$
A^{alg} = \bigcup_{e \in C_{x,a}^{even} \setminus \left\{ x \right\} } C_e^{odd} \subseteq T
$$
The \termdef{non-algebraic decomposition} of $A$ is the hypergraph:
$$
A^{non-alg} = C_{x,a}^{odd} \cup \left( \bigcup_{e \in C_{x,a}^{even} \setminus \left\{ x \right\}} \left(C_e^{even} \setminus \left\{ e \right\}\right) \right)
\subseteq H
$$
Observe that $A^{alg}$ is a $K_k$-decomposition of $K_{k-1} \left( A \right) \setminus K_{k-1} \left( x \right)$, and that $A^{non-alg}$ is a $K_k$-decomposition of $K_{k-1} \left( A \right)$.
	
Finally, set:
$$
M = M \left( k \right) \coloneqq \left| A \right| = \left| A^{alg} \right| + \left| A^{non-alg} \right| = 2^{2k-1} - 2^k + 1
$$

\subsubsection{Vertex, Facet, and Edge Operators}\label{sssec:linear_operators}
\newcommand{\xa}{{\begin{pmatrix} x \\ a \end{pmatrix}}}

An important property of absorbers is that their vertices, facets, and edges can be seen as linear functions of the spanning vertices. This allows us to estimate the number of absorbers for a given edge, and to control the ways in which absorbers for different edges intersect.

In what follows, we sometimes abuse terminology and refer to a vector as what properly should be a set. For example, we may refer to an element of $\F^{k-1}$ as a facet. This should be understood as the set of coordinates of the vector. In the other direction, we sometimes refer to a set by what properly is a vector. For example, we may say that an edge $x \in K_k^n$ is an element of $\F^k$. This should be understood to mean that at the first mention, we fix an ordering of the elements of $x$ so that it is indeed a vector. All subsequent statements should hold regardless of the particular ordering.

Let $T_1 ,T_2 , \ldots , T_k : \F^k \rightarrow \F$ and $P_1, P_2, \ldots, P_k: \F^k \rightarrow \F^{k-1}$ be the canonical projections, i.e., $T_i \left(x_1 , x_2 , \ldots , x_k \right)^T = x_i$ and $P_i \left( x_1 , x_2 , \ldots , x_k \right)^T = \left( x_1 ,\ldots ,x_{i-1} , x_{i+1} , \ldots x_k \right)^T$. Observe that if $x \in K_k^n$ the vertices of $x$ are precisely $T_1 x , T_2 x , \ldots , T_k x$ and the facets contained in $x$ are precisely $P_1 x , P_2 x , \ldots , P_k x $.

Define the linear operator $C:\F^k \rightarrow \F^{2k}$ by:
\begin{equation*}
	C =
	\begin{bmatrix}
		I_k \\
		J_k + I_k
	\end{bmatrix}
\end{equation*}
Where $I_k \in M_k \left( \F \right)$ is the identity matrix and $J_k \in M_k \left( \F \right)$ is the all $1$s matrix. If $x \in \mathcal{C}$, $Cx$ is the vertex set of the associated cross-polytope $C_x$. For every $I \subseteq \left[ k \right]$, define the linear operator $E_I: \F^{2k} \rightarrow \F^k$ by:
\begin{equation*}
	E_I =
	\begin{bmatrix}
		X_{\left[ k \right] \setminus I} & X_{I}
	\end{bmatrix}
\end{equation*}
Where $X_I \in M_k \left( \F \right)$ is $1$ on the positions $ \left( i,i \right), i \in I$ and $0$ elsewhere. Notice that if $x,a \in \F^k$ span a cross-polytope, its edges are precisely $\left\{ E_I \begin{pmatrix}
x \\ a \end{pmatrix} : I \subseteq \left[ k \right] \right\}$.

Combining the observations above, for $x \in \mathcal{C}$, $\left\{ P_i E_I C x : i \in \left[ k \right], I \subseteq \left[ k \right] \right\}$ is the collection of facets in $C_x$. In fact, since every facet is contained in both an edge $e_I$ with $\left| I \right|$ even and an edge $e_I$ with $ \left| I \right|$ odd, we may restrict ourselves to $I \subseteq \left[ k \right]$ of a specific parity. Therefore, if we define:
\begin{gather*}
\begin{align*}
	\FO_1 &= \left\{ P_i E_{\emptyset} : i \in \left[ k \right] \right\}\\
	\FO_2 &= \left\{ P_i E_J C E_I : i \in \left[ k \right] , J \subseteq \left[ k \right] , \left| J \right| = 0 \bmod 2 , \emptyset \neq I \subseteq \left[ k \right] , \left| I \right| = 0 \bmod 2 \right\}\\
	\FO &= \FO_1 \cup \FO_2
\end{align*}
\end{gather*}
then the collection of facets in the absorber for $x$ spanned by $a \in \F^k$ is:
$$
\left\{ F \begin{pmatrix} x \\ a \end{pmatrix} : F \in \FO \right\}
$$

We call $\FO$ the family of \termdef{facet operators}. The facets given by $\FO_1$ are those contained in $x$, with the remaining facets given by $\FO_2$.

We define the \termdef{vertex operators} $\VO$ similarly. Let $\VO_1 = \left\{ T_i E_\emptyset , T_i E_{\left[ k \right]} : i \in \left[ k \right] \right\}$, $\VO_2 = \left\{ T_i E_{ \left[ k \right] } C E_I : i \in \left[ k \right] , \emptyset \neq I \subseteq \left[ k \right] , \left| I \right| = 0 \bmod 2 \right\}$, and $\VO = \VO_1 \cup \VO_2$. If $a \in \F^k$ spans an absorber for $x$, its vertex set is:
$$
\left\{ V \begin{pmatrix} x \\ a \end{pmatrix} : V \in \VO \right\}
$$

Finally, we define the \termdef{edge operators}. Let:
\begin{gather*}
\begin{align*}
\EO_1 &= \left\{ E_I : I \subseteq \left[ k \right] , \left| I \right| = 1 \bmod 2 \right\}\\
\EO_2 &= \left\{ E_J C E_I : \emptyset \neq J \subseteq \left[ k \right], \emptyset \neq I \subseteq \left[ k \right], \left| I \right| = 0 \bmod 2 \right\}\\
\EO &= \mathcal{E}_1 \cup \mathcal{E}_2
\end{align*}
\end{gather*}
If $x,a \in \F^k$ span an absorber, its edge set is:
$$
\left\{ E \begin{pmatrix} x \\ a \end{pmatrix} : E \in \EO \right\}
$$

\begin{notation*}
	For a linear map $T$ defined on $\F^{2k}$ we write $T_1 , T_2$ for the unique linear maps defined on $\F^k$ s.t.\ for all $x = \left( x_1 , x_2 , \ldots , x_{2k} \right)^T \in \F^{2k}$, $Tx = T_1 \left( x_1 , \ldots , x_k \right)^T + T_2 \left( x_{k+1} , \ldots , x_{2k} \right)^T$.
\end{notation*}

Before discussing these operators' properties we pause for motivation. The point of the linear operators is to allow us to answer questions such as: for $x,x' \in K_k^n$, how many pairs of absorbers $A \in \mathcal{A}_x , A' \in \mathcal{A}_{x'}$ are there that intersect each other on a facet? Well, this quantity is certainly bounded above by the number of pairs $a,a' \in \F^k$ for which there exist $F,F' \in \FO$ s.t.:
\begin{equation}\label{eq:facet_intersect}
F \begin{pmatrix} x \\ a \end{pmatrix} + F' \begin{pmatrix} x' \\ a' \end{pmatrix} = 0
\end{equation}
Changing our perspective, we may ask: for a given pair $F,F' \in \FO$, how many pairs $a,a' \in \F^k$ exists s.t.\ Equation \ref{eq:facet_intersect} holds? Well, $\left\{ F \begin{pmatrix} x \\ a \end{pmatrix} + F' \begin{pmatrix} x' \\ a \end{pmatrix} \right\}_{a , a' \in \F^k}$ is an affine subspace of $\F^{k-1}$, and we are asking about the size of the inverse image of $\left\{ 0 \right\}$ under given affine maps. Linear algebra is the perfect framework for this type of calculation.

We now establish useful properties of $\VO, \EO$, and $\FO$.

\begin{prop}\label{pr:vo_properties}
	Let $x \in K_k^n$. There are $O \left( n^{k-1} \right)$ vectors $a \in \F^k$ s.t.\ either $0 \in \left\{ V \xa : V \in \VO \right\}$ or the values $\left\{ V \xa \right\}_{V \in \VO}$ are not distinct. As a consequence there are $\Omega \left( n^k \right)$ edges $a \in K_k^n$ s.t.\ the values $\left\{ V \xa \right\}_{V \in \VO}$ are all distinct and non-zero.
\end{prop}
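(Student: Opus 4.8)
The plan is to reduce the statement to elementary linear algebra over $\F$ together with a union bound. Fix $x = x_1 \dots x_k \in K_k^n$; under our identification the $x_i$ are distinct elements of $\F^*$. For each $V \in \VO$ the map $a \mapsto V\xa = V_1 x + V_2 a$ is an affine map $\F^k \to \F$, and for $V \neq V' \in \VO$ so is $a \mapsto V\xa - V'\xa = (V_1 - V'_1)x + (V_2 - V'_2)a$. A vector $a \in \F^k$ is ``bad'' exactly when one of these $|\VO| + \binom{|\VO|}{2} = O(1)$ affine maps vanishes at $a$. Provided none of these maps is identically zero, each has a zero set that is either empty or a coset of a hyperplane of $\F^k$, hence of size at most $|\F|^{k-1} = O(n^{k-1})$; summing over the $O(1)$ maps shows there are $O(n^{k-1})$ bad vectors, which is the first assertion. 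The consequence is then immediate: $|K_k^n| = \Theta(n^k)$, so discarding the at most $O(n^{k-1})$ bad edges among them leaves $\Omega(n^k)$ edges $a \in K_k^n$ with $\{V\xa\}_{V \in \VO}$ distinct and zero-free.

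So everything hinges on showing that no such affine map is identically zero, i.e.\ that no $V \in \VO$ has $V_2 = 0$ and $V_1 x = 0$, and no $V \neq V' \in \VO$ has $V_2 = V'_2$ and $V_1 x = V'_1 x$. I would establish this by reading off $\VO = \VO_1 \cup \VO_2$ explicitly. Unwinding the definitions of $C$, $E_I$, $E_{[k]}$ and $T_i$, one finds that, as an affine function of $a$, every operator in $\VO$ has one of three forms: the constant $x_i$ (from $T_i E_\emptyset$); the linear form $a_i$ (from $T_i E_{[k]}$); or $b_{i,S} := \sum_{m \in [k] \setminus (\{i\} \cup S)} x_m + \sum_{m \in S} a_m$ for some $i \in [k]$ and nonempty $S = I \setminus \{i\} \subseteq [k] \setminus \{i\}$ (from $T_i E_{[k]} C E_I$). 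Here $S \neq \emptyset$ because $|I|$ is even and positive, so each $b_{i,S}$ is genuinely non-constant, hence not identically zero; and $x_i \neq 0$, so the constants are not identically zero either. Since all of these operators have entries in $\{0,1\}$, two of them are identically equal iff they have the same $a$-support and equal constant parts, and the constant parts are subset sums $\sum_{m \in R} x_m$, $R \subseteq [k]$.

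It then remains to check the pairwise cases, which is short. Two operators with distinct $a$-supports can never be identically equal; a constant $x_i$ cannot equal a non-constant operator; two distinct constants $x_i, x_j$ differ since $x_i \neq x_j$; and $b_{i,S}$, $b_{i',S}$ with the same $S$ but $i \neq i'$ have constant parts differing by $x_i + x_{i'} \neq 0$ (as $i, i' \notin S$). The only remaining coincidence to rule out is $b_{i,\{j\}} = a_j$ (with $i \neq j$), which would force the constant part $\sum_{m \in [k] \setminus \{i,j\}} x_m$ of $b_{i,\{j\}}$ to vanish. Thus the whole argument comes down to the claim that every subset sum $\sum_{m \in R} x_m$ with $R \subseteq [k]$ and $1 \leq |R| \leq k-2$ is nonzero.

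This is where the hypothesis $k \leq 4$ enters, and it is the only real obstacle: when $k \leq 4$ such an $R$ has $|R| \leq 2$, and a sum of at most two distinct nonzero elements of a characteristic-$2$ field is nonzero (a single nonzero term, or $x_m + x_{m'}$ with $x_m \neq x_{m'}$), so the claim holds for every $x \in K_k^n$ regardless of $\pi$. For $k \geq 5$ the sum $\sum_{m \in R} x_m$ can vanish for some choices of $\pi$, which is precisely why this step — and hence the proposition — must be revisited in Section~\ref{sec:proof_k5}.
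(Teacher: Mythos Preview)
Your proof is correct and follows essentially the same strategy as the paper's: reduce to showing that each affine form $a \mapsto V\xa$ and each difference $a \mapsto (V-V')\xa$ is not identically zero, then union-bound over the $O(1)$ resulting hyperplanes. The paper carries out the case analysis directly on the matrix expressions $T_i E_J C E_I$, while you first compute each operator explicitly as one of $x_i$, $a_i$, or $b_{i,S}=\sum_{m\in[k]\setminus(\{i\}\cup S)}x_m+\sum_{m\in S}a_m$ and then compare $a$-supports and constant parts; this reorganisation makes the final reduction to the vanishing of a $(k-2)$-term subset sum (and hence the role of $k\le 4$) more transparent, but the content is the same.
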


\begin{rmk}
	The proof of Proposition \ref{pr:vo_properties} uses the fact that $x \in K_k^n$ doesn't contain any subset of $k-2$ vertices that sum to $0$. This holds only because $k \leq 4$. This is the only place in the proof where this is used. If $x$ doesn't contain any $\left(k-2\right)$-set that sums to $0$ then the conclusion holds even when $k \geq 5$.
\end{rmk}

\begin{proof}

	We first observe that if $T : \F^{2k} \rightarrow \F$ is a linear functional and $x \in \F^k$ is fixed, then $\left\{ a \in \F^k : T \xa = 0 \right\}$ is an affine subspace of $\F^k$. If it isn't the entire space then its dimension is at most $k-1$ and its size is $O \left( n^{k-1} \right)$. To show that the latter is the case it's enough to exhibit some $a \in \F^k$ s.t.\ $T \xa \neq 0$. Alternatively we may show:
	\begin{equation}\label{eq:null_image}
	T_1 x \neq 0 \lor T_2 \neq 0
	\end{equation}
	
	Let $x \in K_k^n$. In order to prove the proposition it's enough to show that Equation \ref{eq:null_image} holds for each of the linear maps $V \in \VO$ and $V+V'$ where $V,V' \in \VO$ are distinct.
	
	First, if $V \in \VO_1$, then by definition of $\VO_1$ we have either $V_1 = T_i$ or $V_2 = T_i$ for some $i \in \left[ k \right]$, both of which imply Equation \ref{eq:null_image}.
	
	If $V \in \VO_2$, then for some $i \in \left[ k \right] , \emptyset \neq I \subseteq \left[k \right] , \left| I \right| = 0 \bmod 2$, $V = T_i E_{\left[ k \right]} C E_I$. Let $j \in I \setminus \left\{ i \right\}$. Then $V_2 e_j = T_i E_{\left[ k \right]} C X_I e_j = 1 \neq 0$ which implies Equation \ref{eq:null_image}.
	
	It remains to show that Equation \ref{eq:null_image} holds for $V+V'$, where $V,V' \in \VO$ are distinct. We consider several cases:
	
	\begin{itemize}
		\item $V , V' \in \VO_1$. In this case $\left( V + V' \right) \xa$ is the difference between two elements of $x_1 , \ldots , x_k , a_1 , \ldots , a_k$, which are distinct for all but $O \left( n^{k-1} \right)$ choices of $a$.
		
		\item $V = T_i E_\emptyset \in \VO_1$ for some $i$ and $V' = T_j E_{\left[ k \right]} C E_I \in \VO_2$, for some $j$ and even sized non-empty $I \subseteq \left[ k \right]$. Then $V_2 + V'_2 = T_j E_{\left[ k \right]} C X_I$. Let $\ell \in I \setminus \left\{ j \right\}$. Then $\left( V_2 + V'_2 \right)e_\ell = 1$, implying that $V_2 + V'_2 \neq 0$, so Equation \ref{eq:null_image} holds.
		
		\item $V = T_i E_{\left[ k \right]} \in \VO_1$ for some $i$ and $V' = T_j E_{\left[ k \right]} C E_I \in \VO_2$, for some $j$ and even sized non-empty $I \subseteq \left[ k \right]$. If $I \setminus \left\{ i,j \right\}$ is non-empty then by an argument similar to the previous case $V_2 + V'_2 \neq 0$. Otherwise $I = \left\{ i,j \right\}$, so $\left( V_1 + V'_1 \right) x = T_j E_{\left[ k \right]} C X_{\left[ k \right] \setminus \left\{ i,j \right\}} x = \sum_{\ell \in \left[ k \right] \setminus \left\{ i,j \right\}} x_\ell$. This is the sum of $k-2$ non-zero distinct elements of $\F^*$, and so is non-zero. Therefore Equation \ref{eq:null_image} holds.
		
		\item $V , V' \in \VO_2$, and $V = T_i E_{\left[ k \right]} C E_I , V' = T_j E_{\left[ k \right]} C E_I$ for appropriate $i,j$ and $I$. In this case $V \xa + V' \xa = \left( T_j + T_i \right) E_I$, which is the sum of two of $x_1 , \ldots , x_k , a_1 , \ldots , a_k$. For all but $O \left( n^{k-1} \right)$ choices for $a$ these are distinct, implying that Equation \ref{eq:null_image} holds.
		
		\item $V , V' \in \VO_2$, $V = T_i E_{\left[ k \right]} C E_I$, and $V' = T_j E_{\left[ k \right]} C E_J$ for $I \neq J$. Note that ${\left| I \Delta J \right| \geq 2}$. If there is some $\ell \in I \setminus \left( J \cup \left\{ i \right\} \right)$ then $\left( V_2 + V'_2 \right) e_\ell = 1$ and we're done. The same holds if $J \setminus \left( I \cup \left\{ j \right\} \right) \neq \emptyset$. Otherwise we have $I \Delta J = \left\{ i,j \right\}$, and in particular $i \neq j$. But then $\left( V_1 + V'_1 \right) x = \sum_{\ell \in \left[ k \right] \setminus I} x_\ell + \sum_{\ell \in \left[ k \right] \setminus J} x_\ell = x_i + x_j \neq 0$, so Equation \ref{eq:null_image} holds.
	\end{itemize}
\end{proof}

The next two propositions, whose proofs are omitted, follow from similar examinations of the definitions of $\FO$ and $\EO$.

\begin{prop}\label{pr:fo_properties}
	For every $F \in \FO$ the following hold:
	\begin{enumerate}
		\item\label{itm:map_sum} If $F \in \FO_2$ there exist distinct $i , j \in \left[ k \right]$ s.t.\ $e_i , e_j \in \ker F_1$.

		\item\label{itm:im_dim} If $F \in \FO_2$ then $\rk F_2 \geq 1$.
		
		\item\label{itm:fo_inverse_space} For every $f \in \F^{k-1}$, $\left\{  a \in \F^k : F_1^{-1} \left( f + F_2 a \right) \neq \emptyset \right\} \subseteq \F^k$ is an affine subspace of dimension $\rk F_1 + 1$.
	\end{enumerate}

\end{prop}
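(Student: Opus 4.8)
The plan is to reduce all three items to rank bookkeeping around one explicit matrix. First I would dispose of $\FO_1$: if $F = P_i E_{\emptyset}$ then $F_1 = P_i$ and $F_2 = 0$, $F$ is surjective, items (a) and (b) are vacuous, and (c) will follow from the general computation below. So assume $F = P_i E_J C E_I \in \FO_2$. Expanding the block definitions of $C$, $E_J$, and $X_J$, one finds that $E_J C = I_k + \mathbf{1}_J \mathbf{1}^T$ as a linear map $\F^k \rightarrow \F^k$, where $\mathbf{1}_J = \sum_{j \in J} e_j$ and $\mathbf{1} = \sum_{j = 1}^{k} e_j$. Its determinant over $\F$ equals $1 + \left| J \right|$ (by the matrix determinant lemma, or a short direct computation of its kernel), which equals $1$ because $\F$ has characteristic $2$ and $\left| J \right|$ is even; hence $E_J C$ is invertible. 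Tracking the two $\F^k$-blocks of $\F^{2k}$ then gives $F_1 = P_i (E_J C) X_{\left[ k \right] \setminus I}$ and $F_2 = P_i (E_J C) X_I$.

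Items (a) and (b) fall out immediately. Since $\emptyset \neq I$ and $\left| I \right|$ is even, $\left| I \right| \geq 2$. For (a): $X_{\left[ k \right] \setminus I} e_\ell = 0$ for every $\ell \in I$, so $F_1 e_\ell = 0$, and any two distinct elements of $I$ serve as the required pair. For (b): $\rk X_I = \left| I \right| \geq 2$ and $E_J C$ is invertible, so $(E_J C) X_I$ has rank $\left| I \right|$; composing with $P_i$ lowers the rank by at most $1$ (its kernel is one-dimensional), whence $\rk F_2 \geq \left| I \right| - 1 \geq 1$.

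For (c), first note that every $F \in \FO$ is surjective: on $\FO_1$ this is clear, and on $\FO_2$ it follows because $E_I$ maps onto $\F^k$ while $E_J C$ is invertible, so $E_J C E_I$ is onto $\F^k$ and then $P_i$ carries this onto $\F^{k-1}$. Thus $\rk F = k - 1$ and $\im F_1 + \im F_2 = \im F = \F^{k-1}$. Now $F_1^{-1}(f + F_2 a) \neq \emptyset$ if and only if $f + F_2 a \in \im F_1$, i.e.\ $F_2 a \in f + \im F_1$ (using $-f = f$ in characteristic $2$), so the set in the statement is precisely $F_2^{-1}(f + \im F_1)$. It is nonempty since $f \in \F^{k-1} = \im F$, hence it is an affine subspace, namely a translate of $F_2^{-1}(\im F_1)$. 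Its dimension is $\dim \ker F_2 + \dim(\im F_1 \cap \im F_2)$; substituting $\dim \ker F_2 = k - \rk F_2$ and the inclusion--exclusion identity $\dim(\im F_1 \cap \im F_2) = \rk F_1 + \rk F_2 - (k-1)$ (valid because $\im F_1 + \im F_2 = \F^{k-1}$) yields exactly $\rk F_1 + 1$.

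The only step that is more than bookkeeping is the first one: spotting the identity $E_J C = I_k + \mathbf{1}_J \mathbf{1}^T$ and recognizing that the hypothesis that $\left| J \right|$ is even is exactly what makes this matrix invertible in characteristic $2$ --- equivalently, what makes $F$ surjective. Once that is in hand, items (a)--(c) are routine rank arithmetic, and the same strategy (writing out the composite matrices $E_I$, $C E_I$, and $E_J C E_I$ explicitly) disposes of the companion statement for $\EO$.
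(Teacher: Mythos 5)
Your proof is correct and follows essentially the same route as the proof sketched, but commented out, in the paper's source: compute $F_1 = P_i(E_J C)X_{[k]\setminus I}$ and $F_2 = P_i(E_J C)X_I$, show $E_J C$ is invertible, and then do rank bookkeeping. Your handling of the determinant is in fact more careful than that draft's: working from the definition $E_I = [X_{[k]\setminus I}\; X_I]$ as you do yields $E_J C = I_k + \mathbf{1}_J \mathbf{1}^T$ whose determinant is $1 + |J| \equiv 1 \pmod 2$, nonzero for all $k$ because $|J|$ is even, whereas the source's draft transposes the two blocks of $E_J$ and lands on $\det(I_{k-|J|} + J_{k-|J|})$, which would vanish for $k$ odd. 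Your dimension count in (c) via $\dim(\im F_1 \cap \im F_2)$ is an equivalent alternative to the draft's fiber decomposition of $F^{-1}(f)$, and your direct argument for (b), composing the invertible $E_J C$ with $X_I$ and then $P_i$, cleanly avoids the draft's detour through the auxiliary facts $\rk F = k-1$ and $\dim \ker F_1 \geq 2$.
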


\begin{prop}\label{pr:eo_properties}
	For every $E \in \EO$, $\dim \ker E_2 \leq k-1$.
\end{prop}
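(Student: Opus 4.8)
The plan is to reduce the proposition to the single assertion that $E_2 \neq 0$ for every $E \in \EO$. This reduction is immediate: each $E \in \EO$ is a linear map $\F^{2k} \to \F^k$, so in the notation fixed before the proposition $E_2$ is a linear map $\F^k \to \F^k$, and by rank--nullity $\dim \ker E_2 \leq k-1$ if and only if $\rk E_2 \geq 1$, i.e.\ $E_2 \neq 0$. So it suffices to show that the ``$a$-part'' of every edge operator is nonzero. When $E = E_I \in \EO_1$ this is trivial: reading off the block form $E_I = \begin{bmatrix} X_{\left[k\right] \setminus I} & X_I \end{bmatrix}$ gives $E_2 = X_I$, and since $\left| I \right|$ is odd, $I \neq \emptyset$, so $X_I \neq 0$.

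The substantive case is $E = E_J C E_I \in \EO_2$, with $\emptyset \neq J \subseteq \left[k\right]$, $\emptyset \neq I \subseteq \left[k\right]$, and $\left| I \right|$ even. Composing the block forms and using $E_I = \begin{bmatrix} X_{\left[k\right]\setminus I} & X_I \end{bmatrix}$ we get $E_2 = E_J C X_I$. A short computation, using that $X_J + X_{\left[k\right]\setminus J} = I_k$ together with $C = \begin{bmatrix} I_k \\ J_k + I_k \end{bmatrix}$, gives $E_J C = X_{\left[k\right]\setminus J} + X_J(J_k + I_k) = I_k + X_J J_k$. The $j$-th column of this matrix is $e_j + \sum_{\ell \in J} e_\ell$, which over a field of characteristic $2$ vanishes precisely when $J = \left\{ j \right\}$, hence for at most one index $j \in \left[k\right]$. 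Since $I$ is nonempty and of even size, $\left| I \right| \geq 2$, so there is some $j \in I$ with $J \neq \left\{ j \right\}$; for this $j$ the $j$-th column of $E_J C$ is nonzero, and since $X_I$ fixes $e_j$ it is also the $j$-th column of $E_J C X_I = E_2$. Hence $E_2 \neq 0$, completing the proof.

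I do not expect a genuine obstacle. The one point that requires a moment's care is that $E_J C$ need not be invertible — by the determinant computation underlying Proposition \ref{pr:fo_properties} it is invertible exactly when $k - \left| J \right|$ is even — so one cannot simply argue ``$E_J C$ surjective and $X_I \neq 0$ imply $E_J C X_I \neq 0$''. The explicit column computation, combined with $\left| I \right| \geq 2$, is precisely what makes the argument go through uniformly in $J$; everything else is bookkeeping with the block matrices.
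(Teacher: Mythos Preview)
Your proof is correct and follows essentially the same approach as the paper's (omitted) proof: reduce to showing $E_2 \neq 0$, handle $\EO_1$ trivially via $E_2 = X_I$, and for $\EO_2$ compute $E_J C = I_k + X_J J_k$ and use $\left|I\right| \geq 2$ to find some $j \in I$ with $E_2 e_j \neq 0$. The only cosmetic difference is that the paper picks two distinct $i,j \in I$ and splits into cases according to whether $i \in J$, whereas you observe directly that the $j$-th column $e_j + \sum_{\ell \in J} e_\ell$ vanishes only when $J = \{j\}$, hence for at most one index; both arguments are the same idea.
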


\subsubsection{Properties of the Template}\label{sssec:template_properties}

The next Proposition establishes that every edge in $K_k^n$ can be embedded in many absorbers. Recall that $\mathcal{A}_x$ is the set of absorbers for $x \in K_k^n$ and that $M$ is the number of edges in an absorber.

\begin{prop}\label{pr:absorbers}
	W.v.h.p.\ (over the choice of $H$ and $\pi$) for every $x \in K_k^n$, $\left| \mathcal{A}_x \right| = \Omega \left( n^{ k - M \varepsilon } \right)$.
\end{prop}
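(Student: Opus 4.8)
The plan is to fix $x \in K_k^n$ and count the number of $a \in \F^k$ that span an absorber for $x$, first showing that the expected count (over both $H$ and $\pi$) is $\Omega(n^{k - M\varepsilon})$, then showing concentration. For the first moment, start from the $\Omega(n^k)$ vectors $a \in K_k^n$ guaranteed by Proposition \ref{pr:vo_properties} for which the vertex operators $\{V\xa\}_{V \in \VO}$ take distinct nonzero values; this already handles the disjointness requirement in the definition of an absorber and guarantees that each even edge $e_I$ ($\emptyset \neq I$) of $C_{x,a}$ has a well-defined associated cross-polytope with all vertices distinct. It remains to account for the edge-inclusion events. An absorber contains $M$ edges drawn from $H$ (the non-algebraic decomposition has $|A^{non-alg}|$ edges that must lie in $H$, and the algebraic part $A^{alg} \subseteq T \subseteq H$ is automatic), but crucially the edges of $A^{non-alg}$ that must be ``found'' in $H$ — namely $C_{x,a}^{odd} \setminus$ (stuff forced into $T$) together with the non-algebraic decompositions of the associated cross-polytopes — total roughly $M$ edges, each present in $H$ independently with probability $p = n^{-\varepsilon}$ (the edges in $T$ are already in $H$ by construction, but the algebraic constraint $x_1 + \dots + x_k = 0$ is a further condition that, by the random choice of $\pi$, holds with probability $\Theta(1/n)$ per edge — this needs to be folded into the count as well, or absorbed into the constants). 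So the expected number of valid $a$ is $\Omega(n^k) \cdot p^{\Theta(M)} = \Omega(n^{k - M\varepsilon})$, after checking that the various probabilities combine multiplicatively up to constants (the relevant edge-inclusion events involve disjoint sets of $k$-tuples for distinct edges since the cross-polytopes are facet-disjoint, hence a fortiori edge-disjoint, by the last bullet in the definition of an absorber).

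The second step is concentration. Since $\left|\mathcal{A}_x\right|$ depends on $H$ (a product of $\binom{n}{k}$ independent Bernoullis) and on $\pi$ (a uniform injection $[n] \hookrightarrow \F^*$), I would apply the concentration inequalities in two stages: condition on $\pi$, apply Lemma \ref{lem:boolean_concentration} or Lemma \ref{lem:stochastic_bound} to control the fluctuation of $\left|\mathcal{A}_x\right|$ in $H$, then use Lemma \ref{lem:random_injection} to control the fluctuation of the conditional expectation in $\pi$. The Lipschitz constants are the main thing to bound: flipping one $k$-tuple of $H$ changes membership only of those $a$ whose absorber uses that $k$-tuple as one of its $\Theta(M) = O(1)$ edges, and the number of such $a$ is $O(n^{k-1})$ by the affine-subspace dimension counting in Propositions \ref{pr:fo_properties} and \ref{pr:eo_properties} (fixing one edge of the absorber fixes $k$ linear constraints on $a$, cutting the dimension by roughly $1$ per constraint — one must check the relevant operator has full-ish rank). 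Similarly a transposition of $\pi$ moves $O(n^{k-1})$ values of $a$. Then Lemma \ref{lem:boolean_concentration} with $N = \binom{n}{k}$, $q = p$, $C = O(n^{k-1})$, and $t$ a small constant times the mean $n^{k - M\varepsilon}$ gives deviation probability $\exp(-n^{\Omega(1)})$ provided $\varepsilon$ is small enough that $t^2 / (C^2 N q) = n^{\Omega(1)}$, i.e. $2(k - M\varepsilon) - 2(k-1) - k + \varepsilon > 0$, which holds for $\varepsilon$ sufficiently small; similarly for the $\pi$-stage via Lemma \ref{lem:random_injection} with $|Y| = |\F^*| = \Theta(n)$.

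Finally, since there are only $\binom{n}{k} = \mathrm{poly}(n)$ choices of $x$, a union bound over $x$ preserves the w.v.h.p.\ conclusion (here using the remark in Section \ref{ssec:notation} that a polynomial intersection of w.v.h.p.\ events is w.v.h.p.). I expect the main obstacle to be bookkeeping rather than conceptual: carefully separating which of the $\leq M$ edges of a prospective absorber are automatically in $H$ (those in $T$) versus those contributing a genuine factor of $p$, correctly counting the algebraic constraints imposed by $\pi$ so that the exponent comes out as exactly $k - M\varepsilon$ and not something worse, and verifying that the edge-inclusion events for the $k$-tuples involved are genuinely independent (which follows from facet-disjointness of the associated cross-polytopes, but should be stated explicitly). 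The rank computations needed for the Lipschitz bounds are routine given Propositions \ref{pr:fo_properties} and \ref{pr:eo_properties}, but one must be slightly careful that fixing an edge-operator value constrains $a$ to an affine subspace of codimension close to $k$, not less.
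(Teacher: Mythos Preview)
Your overall strategy---first moment plus concentration, handled in two stages for $\pi$ and $H$---is the same as the paper's, and the first-moment computation is essentially right (despite some confusion: the edges in $A^{alg}$ are \emph{not} automatically in $H$; being in $T$ requires being in $H$, so all $M$ edges of the absorber contribute a factor of $p$, and the algebraic constraint is satisfied deterministically by construction of the vertex operators, not with probability $\Theta(1/n)$). The union bound over $x$ is fine.

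The genuine gap is in the concentration step over $H$. You propose Lemma~\ref{lem:boolean_concentration} with a uniform Lipschitz constant $C = O(n^{k-1})$, $N = \binom{n}{k}$, $q = p$, and then check $t^2/(C^2Nq) = n^{2(k-M\varepsilon) - 2(k-1) - k + \varepsilon} = n^{2 - k + (1-2M)\varepsilon}$. For $k \ge 3$ this exponent is \emph{negative} regardless of how small $\varepsilon$ is, so the inequality gives nothing. The problem is not the choice of lemma per se but the uniform bound: most $k$-tuples $e$ have $b_e = 0$ (they are not an edge of any would-be absorber for $x$), and among those with $b_e \neq 0$, the value of $b_e$ depends inversely on the rank of the relevant $E_2$---when $E_2$ has large image (many edges affected), it has small kernel (each such edge affects few $a$), and vice versa.

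The paper exploits this by using Lemma~\ref{lem:azuma_concentration} with edge-specific constants $b_e = \sum_{E \in \EO:\, e \in E_1 x + \im E_2} |\ker E_2|$ and computing
\[
\sum_{e} b_e^2 \;=\; O\!\left(\sum_{E \in \EO} |\im E_2|\,|\ker E_2|^2\right) \;=\; O\!\left(n^k \sum_{E \in \EO} |\ker E_2|\right) \;=\; O\!\left(n^{2k-1}\right),
\]
the last step using Proposition~\ref{pr:eo_properties} ($\dim\ker E_2 \le k-1$) together with rank--nullity. With this bound, Azuma gives deviation probability $\exp(-\Omega(n^{1-2M\varepsilon}))$, which is $n^{-\omega(1)}$ for small $\varepsilon$. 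So the fix is precisely to track how $|\ker E_2|$ and $|\im E_2|$ trade off rather than using the worst-case Lipschitz constant uniformly.
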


\begin{rmk}
	The proof relies only on the conclusion of Proposition \ref{pr:vo_properties} holding for $x$. Therefore, if $x$ contains no $\left(k-2\right)$-set whose sum is $0$, w.v.h.p.\ there are $\Omega \left( n^{k - M \varepsilon } \right)$ absorbers for $x$ even if $k \geq 5$.
\end{rmk}

\begin{proof}
	It's enough to show that the bound holds w.v.h.p.\ for an arbitrary $x \in K_k^n$.
	
	As $H$ and $\pi$ are independent, we may apply the following two-step analysis: Condition on the values $\pi$ takes on $x$. Let $\mathcal{A}_x'$ be the set of $a \in \left( \F^* \right)^k$ satisfying the conditions in Proposition \ref{pr:vo_properties} (so $\left| \mathcal{A}'_x \right| = \Omega \left( n^k \right)$). For each $a \in \mathcal{A}'_x$, $\left| \left\{ V \xa : V \in \VO \right\} \right| = \left| \VO \right| = O \left( 1 \right)$. Therefore the probability (over $\pi \left( \left[ n \right] \setminus x \right)$) of all vertices in the absorber spanned by $x , a$ being in $\pi \left( \left[ n \right] \right)$ is at least $\left( 1 - \oone \right) \gamma^{\left| \VO \right|} = \Omega \left( 1 \right)$. Let $\mathcal{A}''_x$ be the set of $a \in \mathcal{A}'_x$ s.t.\ $\left\{ V \begin{pmatrix} x \\ a \end{pmatrix} : V \in \VO \right\} \subseteq \pi \left( \left[ n \right] \right)$. Then:
	$$
	\E_{\pi \left( \left[ n \right] \setminus x \right)} \left[ \left| \mathcal{A}''_x \right| \right] = \left| \mathcal{A}'_x \right| \left( 1 - \oone \right) \gamma^{ k \left( 2^{k-1} + 1 \right) }
	= \Omega \left( n^k \right)
	$$
	Composing a transposition of $\F^* \setminus \pi \left( x \right)$ with $\pi$ affects $\left| \mathcal{A}_x' \right|$ by at most $O \left( n^{k-1} \right)$. Thus, applying Lemma \ref{lem:random_injection}, w.v.h.p.\ $\left| \mathcal{A}_x'' \right| = \Omega \left( n^{k} \right)$.
	
	For the second step, to obtain $\mathcal{A}_x$, we choose $H \sim \hnp$. Note that for $a \in \mathcal{A}_x''$, since the vertices $\left\{ V \xa \right\}_{V \in \VO}$ are distinct, so are the facets $\left\{ F \xa \right\}_{F \in \FO}$. Therefore $a \in \mathcal{A}''_x$ is also in $\mathcal{A}_x$ iff all of the $M \left( k \right)$ edges of the would-be absorber spanned by $x,a$ are in $H$. Thus:
	$$
	\E_H \left[ \left| \mathcal{A}_x \right| \right] = p^{M \left( k \right)} \left| \mathcal{A}_x'' \right|
	= \Omega \left( n^{k - M\left( k \right) \varepsilon} \right)
	$$
	
	$\left| \mathcal{A}_x \right|$ is a Boolean function of the independent Bernoulli random variables $\left\{ \chi_e \right\}_{e \in K_k^n}$ where $\chi_e = 1$ iff $e \in H$. We'd like to apply Lemma \ref{lem:azuma_concentration} to conclude that w.v.h.p.\ $\left| \mathcal{A}_x \right|$ is close to its expectation. For $e \in K_k^n$, set:
	$$
	b_e = \sum_{E \in \EO : e \in E_1 x + \im E_2} \left| \ker E_2 \right|
	$$
	Adding or removing $e$ from $H$ alters $\left| \mathcal{A}_x \right|$ by at most $b_e$. We have:
	\begin{gather*}
	\begin{align*}
		B &\coloneqq \sum_{e \in K_k^n} b_e^2
		= \sum_{e \in K_k^n} \left( \sum_{E \in \EO : e \in E_1 x + \im E_2} \left| \ker E_2 \right| \right)^2
		= O \left( \sum_{e \in K_k^n} \sum_{E \in \EO : e \in E_1 x + \im E_2} \left| \ker E_2 \right|^2 \right)\\
		&= O \left( \sum_{E \in \EO} \sum_{e \in E_1 x + \im E_2} \left| \ker E_2 \right|^2 \right)
		= O \left( \sum_{E \in \EO} \left| \im E_2 \right| \left| \ker E_2 \right|^2 \right)
		= O \left( n^k \sum_{E \in \EO} \left| \ker E_2 \right| \right)
	\end{align*}
	\end{gather*}
	By Proposition \ref{pr:eo_properties} $\dim \ker E_2 \leq k-1 \implies \left| \ker E_2 \right| = O \left( n^{k-1} \right)$. Thus:
	$$
	B = O \left( n^{2k-1} \right)
	$$
	Applying Lemma \ref{lem:azuma_concentration} with (for example) $t = \frac{1}{2} \E \left| \mathcal{A}_x \right| = \Omega \left( n^{k - M \varepsilon } \right)$ we conclude that w.v.h.p.\ $\left| \mathcal{A}_x \right| = \Omega \left( n^{k - M \varepsilon} \right)$.
\end{proof}

We proceed conditioning on the conclusions of Proposition \ref{pr:absorbers} holding.

\subsection{Nibble}\label{ssec:nibble}

The goal of this stage is to extend $T$ to a partial design $T \cup N \subseteq H$ covering almost all facets, such that the hypergraph $L \coloneqq K_{k-1}^n \setminus K_{k-1} \left( T \cup N \right)$ of uncovered facets is pseudo-random (in a sense to be made precise) and not too sparse (specifically, it will have density at least $n^{-a}$, where $a = a \left( k \right) > 0$ is a small constant to be specified later). There are several avenues we can follow. We sketch two approaches, both involving probabilistic proofs.

The first is to apply the random greedy packing algorithm to $H$: Set $N = \emptyset$. Let $L \in H_{k-1} \left( n \right)$ be the set of facets uncovered by $T \cup N$. As long as $d \left( L \right) \geq n^{-a}$ and $H$ contains edges that are facet disjoint from $T \cup N$, choose one uniformly at random and add it to $N$, then update $L$ accordingly.

The second approach is to use the R\"{o}dl Nibble: As before, set $N = \emptyset$ and let $L$ be the set of facets uncovered by $T \cup N$. Let $G \subseteq H$ be the set of edges that are facet disjoint from $T \cup N$. Let $\eta = \eta \left( k \right) > 0$ be a sufficiently small constant. As long as $d \left( L \right) \geq n^{-a}$ and $G \neq \emptyset$, choose a random subset $S \subseteq G$ where each edge is selected with independent probability $\frac{\eta}{n d \left( G \right)}$. Of the edges in $S$, keep only those that are facet disjoint from all others, and add them to $N$. Update $L$ and $G$ accordingly.

We now define the pseudo-randomness conditions we'll need.

\begin{definition}
	Let $S \subseteq \F^{k-1}$ and let $C > 0$. $S$ is \termdef{$C$-affine-bounded} if for every affine space $A \subseteq \F^{k-1}$ of dimension at least $1$, ${\left| A \cap S \right| \leq C \left| A \right| \frac{\left| S \right|}{\left| \F \right|^{k-1} }}$. If $G \in H_{k-1}^n$ and $\tau : \left[ n \right] \rightarrow \F$, we say that $\left( G , \tau \right)$ is $C$-affine-bounded if $\tau \left( G \right)$ is $C$-affine-bounded. If $\tau$ and $C$ are implicit, we simply say that $G$ is affine-bounded.
\end{definition}

\begin{definition}
	Let $G \in H_{k-1}^n$, $c > 0$, and $h \in \N$. $G$ is \termdef{$\left(c,h\right)$-typical} if for any collection of $\ell \leq h$ distinct $S_1 ,\ldots ,S_\ell \in \binom{\left[ n \right]}{k-2}$, ${\left| \cap_{i \in \left[ \ell \right]} G \left( S_i \right) \right| = \left( 1 \pm c \right) d \left( G \right)^\ell n}$.
	
\end{definition}

Roughly speaking, a hypergraph is affine-bounded if its intersection with every non-trivial affine space (under an appropriate map of the vertices) is bounded above by what we'd expect in a random hypergraph of the same density. Similarly, typicality means that all the degrees, codegrees, etc., are what we'd expect in a random hypergraph.

The next lemma establishes that affine-boundedness carries over, in a certain sense, to $K_k$-decompositions.

\begin{lemma}\label{lem:bounded_inverse}
	There exists some $D = D \left( k \right) > 0$ s.t.\ the following holds: Let $G \in H_{k-1} \left( n \right)$, $\tau : \left[ n \right] \hookrightarrow \F^*$ be injective, and $C > 0$. Let $S \subseteq K_k \left( G \right)$ be a $K_k$-decomposition of $G$. If $ \left(G , \tau \right)$ is $C$-affine-bounded then for any $f \in \F^{k-1}$ and $F \in \FO_2$, $\left| \tau \left( S \right) \cap F_1^{-1} \left( f \right) \right| \leq D C \frac{\left| S \right|}{n^k} \left| F_1^{-1} \left( f \right) \right|$.
\end{lemma}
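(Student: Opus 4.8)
The plan is to reduce the statement to the $C$-affine-boundedness of $\tau(G)$ inside $\F^{k-1}$, by pushing each edge of $S$ lying in $F_1^{-1}(f)$ down to a suitable facet of $G$. Fix $F \in \FO_2$ and $f \in \F^{k-1}$; if $F_1^{-1}(f) = \emptyset$ both sides of the desired inequality vanish, so assume $F_1^{-1}(f) \neq \emptyset$. By the first item of Proposition~\ref{pr:fo_properties} there are distinct $i,j \in [k]$ with $e_i, e_j \in \ker F_1$, so $d := \dim\ker F_1 \geq 2$ and $|F_1^{-1}(f)| = |\F|^d$.

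Since $F_1 e_i = 0$, the linear map $F_1$ does not depend on its $i$-th coordinate, hence factors as $F_1 = \tilde F_1 \circ P_i$ for a unique linear $\tilde F_1 : \F^{k-1} \to \F^{k-1}$; because $\ker P_i = \langle e_i \rangle$ and $\ker F_1 = P_i^{-1}(\ker \tilde F_1)$, we get $\dim\ker\tilde F_1 = d - 1 \geq 1$. Thus $A := \tilde F_1^{-1}(f)$ is a nonempty affine subspace of $\F^{k-1}$ of dimension $d-1 \geq 1$, so affine-boundedness applies to it and $|A| = |\F|^{d-1}$. Now to each $e \in S$ with $\tau(e) \in F_1^{-1}(f)$ I would associate the facet $g_e \subseteq e$ obtained by deleting the $i$-th vertex of $e$: then $g_e \in G$ (as $S \subseteq K_k(G)$), and reading $g_e$ in the ordering inherited from $e$ we have $\tau(g_e) = P_i(\tau(e))$, whence $\tilde F_1(\tau(g_e)) = F_1(\tau(e)) = f$, i.e.\ $\tau(g_e) \in A$. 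Crucially $e \mapsto g_e$ is injective on $S$: if $g_e = g_{e'}$ then $e$ and $e'$ share a facet of $G$, which forces $e = e'$ since $S$ is a $K_k$-decomposition of $G$. The same decomposition property gives $|G| = k|S|$, as each facet of $G$ lies in exactly one of the $|S|$ edges of $S$ and each such edge has exactly $k$ facets.

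Putting this together, $|\tau(S) \cap F_1^{-1}(f)|$ is at most the number of $g \in G$ for which some ordering of $\tau(g)$ lies in $A$; bounding this by $(k-1)!$ times $\max_\sigma |\tau(G) \cap \sigma(A)|$ over coordinate permutations $\sigma$ (each $\sigma(A)$ being again affine of dimension $d-1 \geq 1$ and of size $|A|$), applying $C$-affine-boundedness, and using $|\tau(G)| = |G| = k|S|$ gives
\[
|\tau(S) \cap F_1^{-1}(f)| \;\leq\; (k-1)!\, C\, |A|\, \frac{|G|}{|\F|^{k-1}} \;=\; k!\, C\, |S|\, |\F|^{\,d-k} \;\leq\; k!\, C\, \frac{|S|}{n^k}\, |F_1^{-1}(f)|,
\]
where the last step uses $|\F| \geq 2n$. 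So the lemma holds with $D = D(k) = k!$.

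I expect no real obstacle here: the proof is a short chain of linear-algebra observations, and the only place the structure of $\FO_2$ is genuinely used is in guaranteeing $\dim A \geq 1$, which is precisely the content of the first item of Proposition~\ref{pr:fo_properties} (that $\ker F_1$ contains two independent standard basis vectors) — and this is exactly why the statement is false for $F\in\FO_1$, where $\dim\ker F_1=1$. The only mild nuisance is keeping track of the fixed orderings of edges and facets when identifying sets with vectors, which costs only the harmless constant factor $(k-1)!$ absorbed into $D$; with slightly more care one can take $D = k$.
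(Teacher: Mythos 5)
Your proof is correct and follows essentially the same route as the paper: both push the edges of $S$ lying in $F_1^{-1}(f)$ down to facets of $G$ via the coordinate projection $P_i$ (with $e_i \in \ker F_1$), use the $K_k$-decomposition property to make this map injective, and then apply affine-boundedness of $\tau(G)$ to the resulting $(\dim\ker F_1 - 1)$-dimensional affine subspace. Your $A = \tilde F_1^{-1}(f)$ is exactly $P_i\left(F_1^{-1}(f)\right)$, so it is the same subspace the paper works with; the $(k-1)!$ ordering factor you track is a harmless piece of bookkeeping that the paper absorbs into its standing convention that statements hold regardless of the fixed orderings.
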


\begin{proof}
	
	If $f \notin \im F_1$ then $F_1^{-1} \left( f \right) = \emptyset$ and the conclusion holds trivially. Otherwise $\left| F_1^{-1} \left( f \right) \right| = \left| \ker F_1 \right|$. Furthermore, $F_1^{-1} \left( f \right) = x_0 + \ker F_1$ for some $x_0 \in \F^k$.
	
	By Proposition \ref{pr:fo_properties} item \ref{itm:map_sum}, there exist distinct $i , j \in \left[ k \right]$ s.t.\ $e_i , e_j \in \ker F_1$. Observe that $e_i \in \ker P_i$. Therefore $\dim P_i \left( \ker F_1 \right) \leq \dim \ker F_1 - 1$. Additionally, $P_i e_j \neq 0$, so $\dim P_i \left( \ker F_1 \right) \geq 1$. Hence:
	$$
	1 \leq \dim P_i \left( F_1^{-1} \left( f \right) \right) = \dim P_i \left( \ker F_1 \right) \leq \dim \ker F_1 - 1
	$$
	Because $S$ is a $K_k$-decomposition of $G$, $P_i$ induces an injection from $\tau \left( S \right)$ into $\tau \left( G \right)$. Therefore:
	$$
	\left| \tau \left( S \right) \cap F_1^{-1} \left( f \right) \right|
	= \left| P_i \left( \tau \left( S \right) \cap F_1^{-1} \left( f \right) \right) \right|
	\leq \left| \tau \left( G \right) \cap P_i \left( F_1^{-1} \left( f \right) \right) \right|
	$$
	Applying affine-boundedness and the observations above:
	\begin{gather*}
	\begin{align*}
	\left| \tau \left( G \right) \cap P_i \left( F_1^{-1} \left( f \right) \right) \right|
	= O \left( C \left| P_i \left( F_1^{-1} \left( f \right) \right) \right| \frac{\left| G \right|}{n^{k-1}} \right)
	= O \left( C n^{\dim \ker F_1 - 1} \frac{ \left| G \right| }{n^{k-1}} \right)
	\end{align*}
	\end{gather*}
	Finally, since $\left| S \right| = \Theta \left( \left| G \right| \right)$, we have:
	$$
	\left| \tau \left( G \right) \cap P_i \left( F_1^{-1} \left( f \right) \right) \right| = O \left( C \frac{\left| S \right|}{n^k} \left| F_1^{-1} \left( f \right) \right| \right)
	$$
	We may take $D$ to be the implicit constant in the final term.
\end{proof}

\begin{prop}\label{pr:leave}
	Fix $h , \ell \in \N$, and $a, c_0 > 0$. There exist $C , \delta > 0$ s.t.\ w.v.h.p.\ (over the choice of $H$ and $\pi$) there exists some $N \subseteq H$ facet-disjoint from $T$ s.t.\ $L \coloneqq K_{k-1}^n \setminus K_{k-1} \left( N \cup T \right)$ satisfies:
	\begin{enumerate}
		\item\label{itm:affine_bounded_leave} $L$ is $C$-affine-bounded.
		
		\item $L$ is $\left( c,h \right)$-typical, with $c < c_0 d \left( L \right)^\ell$.
		
		\item $n^{-\delta} \geq d \left( L \right) \geq n^{- a}$.
		
		\item $L$ is $k$-divisible.
	\end{enumerate}
\end{prop}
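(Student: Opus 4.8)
The plan is to realize $N$ as the leave of one of the two random processes sketched just above — I will take the R\"odl nibble — run on the subhypergraph $G_0 = \left\{ e \in H : e \text{ is facet-disjoint from } T \right\}$ of available edges, and then to verify conclusions (a)--(d) w.v.h.p.\ for the resulting $L$. First I record that $G_0$ is pseudo-random. An edge $e\in H$ fails to lie in $G_0$ only when one of its $k$ facets $f$ is covered by $T$, which forces $\sum f\in\pi\left(\left[n\right]\right)\setminus f$ and $f\cup\left\{\sum f\right\}\in H$, an event of probability $O\left(p\right)$ for each $f$; so the algebraic edges cover only $O\left(n^{k-1-\varepsilon}\right)=o\left(n^{k-1}\right)$ facets, and a first-moment estimate together with Lemmas \ref{lem:boolean_concentration}--\ref{lem:random_injection} shows that w.v.h.p.\ $\left|G_0\right|=\left(1-O\left(p\right)\right)\left|H\right|$ and that all degrees and codegrees of $G_0$ in $K_{k-1}^n$ up to level $h$ equal $p$ times their $K_k^n$-values up to a factor $1\pm n^{-\Omega\left(1\right)}$; in particular the number of available edges through any facet not covered by $T$ is $\left(1\pm n^{-\Omega\left(1\right)}\right)pn$.

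Now run the nibble on $G_0$ from $N=\emptyset$, stopping at the first round in which the leave density falls below $n^{-\delta}$, where $\delta=\delta\left(k,h,\ell,a,c_0\right)>0$ is a small constant to be fixed at the end: small enough that $\delta<a$ and that, throughout the run, the selection probability stays below $1$ and the available-edge set stays non-empty (both hold once $d\left(L\right)\ge n^{-\delta}$ with $\delta$ small relative to $k$ and $\varepsilon$). Since the leave density drops by only a bounded factor per round we do not overshoot, so after $r=\Theta\left(\delta\log n\right)$ rounds the leave $L$ satisfies $n^{-a}\le d\left(L\right)\le n^{-\delta}$ (conclusion (c)). The behaviour of the process is the standard nibble bookkeeping (\cite{Ke14}, \cite{Ke15}, or the Pippenger--Spencer/R\"odl framework with the sharper, polynomially small error estimates): conditioned on the history a given uncovered facet is covered in a round with probability $1-e^{-\Theta\left(\eta\right)}\pm n^{-\Omega\left(1\right)}$, essentially independently of the others, and the inequalities of Section \ref{sec:intro} give per-round concentration; a union bound over the $O\left(\log n\right)$ rounds then yields that $L$ is $\left(c,h\right)$-typical with $c=n^{-\Omega\left(1\right)}$ (conclusion (b)), and $c<c_0\,d\left(L\right)^\ell$ once $\delta$ (hence the exponent $\delta\ell$) is taken small enough relative to the error exponent, which in turn forces $\varepsilon$ small. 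Conclusion (d) is automatic: for $S\in\binom{\left[n\right]}{i}$ every edge of the partial design $N\cup T$ containing $S$ covers exactly $k-i$ of the $\binom{n-i}{k-1-i}$ facets through $S$, so $\left|L\left(S\right)\right|\equiv\binom{n-i}{k-1-i}\equiv 0\pmod{k-i}$, the last congruence because $n\in N_k$ is $k$-divisible.

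The one genuinely new ingredient is affine-boundedness, conclusion (a). The key point is that, for fixed $k$, there are only polynomially many (in $n$) affine subspaces $A\subseteq\F^{k-1}$ of dimension $\ge 1$, so it suffices to bound $\phi_A\left(L\right)\coloneqq\left|A\cap\pi\left(L\right)\right|$ for each such $A$ separately and then union-bound. Initially $\phi_A\left(K_{k-1}^n\right)\le\left|A\right|\le C\left|A\right|\binom{n}{k-1}/\left|\F\right|^{k-1}$ for a suitable constant $C$, since $\binom{n}{k-1}/\left|\F\right|^{k-1}=\Theta\left(1\right)$. Because the nibble covers facets essentially uniformly, $\phi_A\left(L\right)$ and $\left|L\right|$ contract by the same factor $1-\Theta\left(\eta\right)\pm n^{-\Omega\left(1\right)}$ each round; the per-round concentration of $\phi_A\left(L\right)$ is comfortable while $\phi_A\left(L\right)$ is polynomially large, its conditional mean decrease being $\Theta\left(\eta\,\phi_A\left(L\right)\right)$, and once $\phi_A\left(L\right)$ has dropped below $\log^3 n$ the target bound, which is $\Theta\left(C\left|\F\right|d\left(L\right)\right)=n^{\Omega\left(1\right)}$ as $\delta<1$, holds for the remainder of the process. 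Thus the ratio $\phi_A\left(L\right)/\left|L\right|$ is preserved up to a cumulative factor $\left(1+n^{-\Omega\left(1\right)}\right)^r=1+o\left(1\right)$, which we absorb into $C$; the randomness of $\pi$ is controlled by Lemma \ref{lem:random_injection} and that of $H$ by Lemma \ref{lem:azuma_concentration}.

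The hard part — and the reason for sketching rather than writing out the argument — is this last interface: extracting from the nibble the \emph{uniformity} of covering with a genuine polynomial error term, rather than the $o\left(1\right)$ supplied by the bare semi-random method, and doing so simultaneously for all the polynomially many affine subspaces, so that $\left(c,h\right)$-typicality with $c<c_0 d\left(L\right)^\ell$ and affine-boundedness both persist down to density $n^{-\delta}$. The remaining ingredients — the pseudo-randomness of $G_0$, the self-correcting behaviour of $\phi_A$, and the automatic divisibility — are routine.
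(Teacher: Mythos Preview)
Your proposal is correct and takes essentially the same approach as the paper: run the R\"odl nibble on $G_0$, track typicality, affine-boundedness, and density through the $\Theta(\log n)$ rounds via per-round concentration plus a union bound, and observe that $k$-divisibility is automatic. The paper carries this out in detail in Appendix~\ref{app:nibble} (Proposition~\ref{pr:nibble_steps}, Lemmas~\ref{lem:induction_base} and~\ref{lem:induction_step}), with the minor simplification that affine-boundedness is checked only for one-dimensional affine subspaces, since higher-dimensional ones partition into lines; otherwise your outline matches the paper's argument and correctly flags the genuine work as the polynomial error control persisting through $O(\log n)$ rounds.
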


Both the random greedy packing algorithm and the Nibble yield $N$ and $L$ with the desired properties w.v.h.p.\ (over the internal randomness of the algorithms as well as $H$ and $\pi$). Both algorithms have been extensively analyzed (for example, see \cite{rodl1985packing} or \cite{alon2016probabilistic}, Chapter 4.7, for the Nibble, and \cite{Wo99} Section 7.2 for the greedy packing algorithm) and it is straightforward to adapt these analyses to prove Proposition \ref{pr:leave}. For completeness' sake we analyze the Nibble in Appendix \ref{app:nibble}.

We proceed conditioning on the existence of $N,L$, and $\delta$ as in Proposition \ref{pr:leave}.

\subsection{Applying Keevash's Theorem}\label{ssec:keevash_application}

\newcommand{\leave}{{K_{k - 1} \left( S \setminus \left\{ s_i \right\} \right)}}

We recall Keevash's existence theorem (\cite{Ke15}, Theorem 6.2) as it applies to $\steinParams$-Steiner systems:

\begin{thm}\label{thm:Keevash}
	For every $k \geq 2$ there exists $c_0 , a \in \left( 0,1 \right)$ and $h, \ell , n_0 \in \N$ s.t.\ if $n \geq n_0$ and $G \in H_{k-1} \left( n \right)$ is $k$-divisible, $\left( c,h \right)$-typical, $d \left( G \right) > n^{-a}$, and $c < c_0 d \left( G \right)^\ell$ then $K_k \left( G \right)$ has a $K_k$-decomposition.
\end{thm}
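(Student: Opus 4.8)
This is Theorem~6.2 of \cite{Ke15}, quoted here verbatim, and we use it as a black box rather than reproving it; nonetheless, here is the strategy one would follow, which — not coincidentally — parallels the structure of the present paper. The plan is to proceed by induction on $k$ via a probabilistic algorithm with four phases.

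\textbf{Absorbing structure.} First I would fix a finite field $\F$ with $|\F| = \Theta(n)$, choose a uniformly random injection $\pi : [n] \hookrightarrow \F^*$, and set aside the ``algebraic'' partial $K_k$-decomposition $T$ consisting of those $k$-subsets of $G$ whose $\pi$-images sum to $0$; cross-polytopes and the iterated absorbers of Section~\ref{sssec:absorbers} give $T$ the switching property that any single edge of a putative decomposition can be exchanged for an already-present edge-disjoint collection, at the cost of re-using algebraic edges. \textbf{Nibble.} Next I would run the R\"odl nibble (or a random greedy packing) inside $K_k(G)$, avoiding the facets covered by $T$, to obtain a partial $K_k$-decomposition $N$ covering all but a small pseudo-random leave $L$; this is exactly where the $(c,h)$-typicality and the density lower bound $d(G) > n^{-a}$ enter, both to make the nibble analysis (via concentration inequalities such as Lemmas~\ref{lem:azuma_concentration} and~\ref{lem:stochastic_bound}) go through and to ensure $L$ inherits enough regularity. \textbf{Cover-down and integral designs.} I would then repeatedly boost the typicality of $L$ while peeling off controlled portions with explicit decompositions, and invoke the theory of integral designs: solve the decomposition problem over $\Z$ first (a linear-algebraic question governed precisely by the $k$-divisibility hypothesis), applying the inductive hypothesis to the typical $(k-1)$-divisible link graphs $G(v)$, and then convert the integral solution to an honest decomposition. \textbf{Absorption.} Finally I would use the algebraic absorbers from $T$ to replace the ``illegal'' edges left by the previous phase, yielding a genuine $K_k$-decomposition of $K_k(G)$.

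The hard part — and the reason we quote rather than prove — is the cover-down iteration and the integral-design machinery: maintaining quantitative control of the pseudo-randomness of $L$ across the iteration, and the algebraic manipulations converting $\Z$-solutions into $0/1$-solutions, are what occupy the bulk of \cite{Ke14,Ke15}, together with the induction on $k$ that the present paper is specifically engineered to avoid. Accordingly we take Theorem~\ref{thm:Keevash} as given.
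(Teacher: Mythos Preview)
Your proposal is correct and matches the paper's treatment exactly: the paper does not prove Theorem~\ref{thm:Keevash} at all, but simply quotes it from \cite{Ke15} (Theorem~6.2) and invokes it as a black box to decompose the leave $L$. Your additional sketch of Keevash's strategy is accurate and even goes beyond what the paper provides, correctly identifying the induction on $k$ and the integral-design machinery as the technical core that the present paper is designed to sidestep.
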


By Proposition \ref{pr:leave} $L$ satisfies the conditions in Theorem \ref{thm:Keevash}, and hence it has a $K_k$-decomposition ${S \subseteq K_k \left( L \right)}$.

\subsection{Absorbing $S$}\label{ssec:absorbing}

Note that $T \cup N \cup S$ is a $K_k$-decomposition of $K_{k-1}^n$. Typically, however, $S \nsubseteq H$. To remedy this, we'll replace the edges in $S$ with appropriate absorbers, as follows:

\begin{algorithm}\label{alg:RGA}\leavevmode
	\begin{itemize}
		\item Order the the edges in $S$ arbitrarily, $s_1 , s_2 , \ldots ,s_{\left| S \right|}$.
		
		\item For every $1 \leq i \leq \left| S \right|$, choose $A_i$ uniformly at random from the absorbers in $\mathcal{A}_{s_i}$ that are facet-disjoint from $A_1 , A_2 , \ldots , A_{i-1}$ as well as $S \setminus \left\{ s_i \right\}$. If there are no such absorbers, abort.
	\end{itemize}
\end{algorithm}

If the algorithm doesn't abort then
$$N \cup \left( T \setminus \left( \bigcup_{i=1}^{\left| S \right|} A_i^{alg} \right) \right) \cup \left( \bigcup_{i=1}^{\left| S \right|} A_i^{non-alg} \right) \subseteq H$$
is a $K_k$-decomposition of $K_{k-1}^n$.

It remains to prove that w.v.h.p.\ Algorithm \ref{alg:RGA} doesn't abort. To get some feel for why this is true, consider the last step of the algorithm, in which we choose an absorber for $s_{\left|S\right|}$. At this point $\left|S\right| - 1 = O \left( n^{k-1 - \delta} \right)$ absorbers have been selected, each covering $O \left( 1 \right)$ facets. We adopt the heuristic that the set of covered facets is a random subset of $K_{k-1}^n$ of size $O \left( n^{k-1-\delta} \right)$. This will be justified by affine-boundedness of $L$ together with Lemma \ref{lem:bounded_inverse}. Now, consider how many absorbers for $s_{\left|S\right|}$ a facet $f$ excludes if it is covered by a different absorber. If, for example, $f \in F_1 s_{\left|S\right|} + \im F_2$ for some $F \in \FO_2$, then it may exclude as many as $\left| \ker F_2 \right|$ absorbers. If we sum over all elements of $F_1 s_{\left|S\right|} + \im F_2$, the expected number of absorbers excluded in this way is $O \left( \frac{n^{k-1-\delta}}{n^{k-1}} \left| \im F_2 \right| \left| \ker F_2 \right| \right) = O \left( n^{k - \delta} \right)$. The last term is independent of $F$, so we conclude that the expected number of excluded absorbers is $O \left( n^{k - \delta} \right)$. Provided $\varepsilon$ is small enough, this is much less than $\left| \mathcal{A}_{s_{\left|S\right|}} \right|$.

We'll need an estimate for the number $O_f$ of absorbers for the elements of $S$ that cover a given facet $f \in K_{k-1}^n \setminus L$.

\begin{lemma}\label{lem:absorber_cover}
	Let $f \in K_{k-1}^n \setminus L$. Let $O_f = \sum_{s \in S} \left| \left\{ A \in \mathcal{A}_s : f \in K_{k-1} \left( A \right) \right\} \right|$. Then $O_f = O \left( n^{k-\delta} \right)$.
\end{lemma}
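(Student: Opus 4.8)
The plan is to fix a facet $f \in K_{k-1}^n \setminus L$ and bound, for each edge $s \in S$ and each facet operator $F \in \FO$, the number of absorbers $A \in \mathcal{A}_s$ whose facet set (given by $\{F' \binom{s}{a} : F' \in \FO\}$ for the spanning vertex $a$) contains $f$ because $F\binom{s}{a} = f$. Summing over $F$ and $s$ then yields $O_f$. Since $\FO = \FO_1 \cup \FO_2$ and $\FO_1$ consists of operators reading off facets of $s$ itself (which don't depend on $a$ and can account for at most $O(1)$ absorbers each, irrelevant to the asymptotics once we've established the bulk), the work is in $\FO_2$.

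First I would recall that for a fixed $F \in \FO_2$, the equation $F\binom{s}{a} = f$ — i.e.\ $F_1 s + F_2 a = f$ — has a solution set in $a$ which, by Proposition \ref{pr:fo_properties} item \ref{itm:fo_inverse_space}, is an affine subspace of $\F^k$ of dimension $\rk F_1 + 1$, hence of size $O(n^{\rk F_1 + 1})$. So trivially the number of absorbers for a \emph{single} $s$ with this property is $O(n^{\rk F_1 + 1})$, and summing over all $s \in S$ (there are $\Theta(|L| \cdot n / k) = O(n^{k-\delta})$ of them, since $d(L) \le n^{-\delta}$) would give $O(n^{2k - \delta})$, which is far too weak. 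The point of the lemma is that $s$ is \emph{not} arbitrary: $s$ ranges over the decomposition $S$ of $L$, so the preimages under the relevant operators are constrained. Concretely, I would turn the count around: instead of summing over $s$ and counting $a$, I sum over $F \in \FO_2$ and count pairs $(s,a)$ with $s \in S$, $a \in \F^k$, $A \in \mathcal{A}_s$ spanned by $a$, and $F\binom{s}{a} = f$. For such a pair, $f - F_2 a \in \im F_1$ and $s \in F_1^{-1}(f - F_2 a)$; so the number of valid $s$ for a given $a$ is $|\tau(S) \cap F_1^{-1}(f - F_2 a)|$ where $\tau$ denotes the identification $\pi$ of $[n]$ with a subset of $\F^*$. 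This is exactly the quantity controlled by Lemma \ref{lem:bounded_inverse}: since $L$ is $C$-affine-bounded (Proposition \ref{pr:leave}) and $S$ is a $K_k$-decomposition of $L$, that lemma gives $|\tau(S) \cap F_1^{-1}(g)| = O\!\left(\frac{|S|}{n^k}|\ker F_1|\right) = O\!\left(\frac{|L|}{n^k} n^{\dim\ker F_1}\right)$ for every $g \in \F^{k-1}$, uniformly.

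Putting it together: for each $F \in \FO_2$,
\begin{gather*}
\begin{aligned}
\#\{(s,a) : s\in S,\ A\in\mathcal A_s \text{ spanned by }a,\ F\tbinom{s}{a}=f\}
&\le \sum_{a \in \F^k} \left|\tau(S) \cap F_1^{-1}(f - F_2 a)\right| \\
&= O\!\left(n^k \cdot \frac{|L|}{n^k}\, n^{\dim\ker F_1}\right)
= O\!\left(|L|\, n^{\dim\ker F_1}\right).
\end{aligned}
\end{gather*}
Now $|L| = d(L)\binom{n}{k-1} = O(n^{k-1-\delta})$, and by Proposition \ref{pr:fo_properties} item \ref{itm:im_dim} we have $\rk F_2 \ge 1$, so $\dim\ker F_1 = k - \rk F_1 \le k - (\rk F - \rk F_2) = k - (k-1) + \rk F_2$; more directly I'd just use $\rk F_1 \ge 1$ (equivalently $\dim \ker F_1 \le k-1$), which follows since $\rk F = k-1$ and $\rk F_2 \ge 1$ force $\rk F_1 \ge (k-1) - (\text{something} \le \dots)$ — actually the clean statement is $\dim\ker F_1 \le k-1$, giving $|L|\,n^{\dim\ker F_1} = O(n^{k-1-\delta}\cdot n^{k-1}) = O(n^{2k-2-\delta})$, which is again not obviously $O(n^{k-\delta})$. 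So the genuinely delicate point — and the step I expect to be the main obstacle — is pinning down the right exponent: I need $\dim\ker F_1 \le k-1$ \emph{and} the $|L|$ factor to be leveraged so that the product is $O(n^{k-\delta})$. The resolution is that one does not sum freely over all $a \in \F^k$: once $s$ is fixed the constraint that $a$ span an \emph{absorber} for $s$ (all $M$ edges in $H$, vertices distinct) is not needed for an upper bound, but the affine-bounded estimate already encodes that only $O(|L|/n^k \cdot n^{\dim \ker F_1})$ choices of $s$ survive. Re-examining, the correct accounting is: fix $F$, sum over $a$ in the affine subspace $\{a : f - F_2 a \in \im F_1\}$ (dimension $k - \rk F_2 \le k-1$ by item \ref{itm:im_dim}), and for each such $a$ use Lemma \ref{lem:bounded_inverse}; this gives $O(n^{k-\rk F_2}) \cdot O(|L| n^{\dim \ker F_1}/n^{?})$ — I would carefully track the normalization in Lemma \ref{lem:bounded_inverse} (where the bound is $DC\frac{|S|}{n^k}|F_1^{-1}(f)|$ and $|F_1^{-1}(f)| = |\ker F_1| = \Theta(n^{\dim \ker F_1})$), arriving at a total of $O\!\left(n^{k - \rk F_2}\cdot \frac{|L|}{n^k}\cdot n^{\dim\ker F_1}\right) = O\!\left(|L|\, n^{\dim\ker F_1 - \rk F_2}\right) = O\!\left(|L|\,n^{k - \rk F_1 - \rk F_2}\right)$, and since $\rk F_1 + \rk F_2 \ge \rk F = k-1$ this is $O(|L|\,n) = O(n^{k-\delta})$, as $|L| = O(n^{k-1-\delta})$. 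Finally, summing over the $O(1)$ operators $F \in \FO_2$ and adding the negligible $\FO_1$ contribution gives $O_f = O(n^{k-\delta})$, completing the proof. The only real care needed is the bookkeeping in the previous display chain and confirming the rank inequality $\rk F_1 + \rk F_2 \ge k-1$, which holds because $\rk F = k-1$ and $F\binom{x}{a} = F_1 x + F_2 a$.
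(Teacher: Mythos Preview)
Your overall approach is exactly the paper's: rewrite $O_f$ as a sum over $F\in\FO$, swap the order to sum over $a$ first, and invoke Lemma~\ref{lem:bounded_inverse} (via affine-boundedness of $L$) to bound the number of $s\in S$ in each fibre $F_1^{-1}(f+F_2 a)$. However, two steps are not correct as written.

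\textbf{The $\FO_1$ contribution.} Your dismissal ``at most $O(1)$ absorbers each'' is false: for $F\in\FO_1$ the value $F\binom{s}{a}=F_1 s$ is a facet of $s$ and is \emph{independent of $a$}, so if $f$ happened to equal that facet then \emph{every} absorber for $s$ (there are $\Omega(n^{k-M\varepsilon})$ of them) would be counted. The right argument uses the hypothesis $f\notin L$: since $s\in S\subseteq K_k(L)$, every facet of $s$ lies in $L$, hence $F_1 s\in\pi(L)$ and $F_1 s\neq f$. So the $\FO_1$ contribution is exactly zero. This is precisely where the assumption $f\notin L$ enters, and you never use it.

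\textbf{The dimension of $\{a: f-F_2 a\in\im F_1\}$.} You assert this affine subspace has dimension $k-\rk F_2$; this is wrong in general. By Proposition~\ref{pr:fo_properties}\ref{itm:fo_inverse_space} its dimension is $\rk F_1+1$, and since $\rk F_1+\rk F_2 = (k-1)+\dim(\im F_1\cap\im F_2)$ one has $\rk F_1+1\ge k-\rk F_2$, with strict inequality whenever $\im F_1\cap\im F_2\neq 0$ (which does occur for some $F\in\FO_2$). Your claimed bound therefore \emph{undercounts} the admissible $a$'s, so the chain $O(n^{k-\rk F_2})\cdot O\bigl(\tfrac{|L|}{n^k}n^{k-\rk F_1}\bigr)=O(|L|\,n^{k-\rk F_1-\rk F_2})$ is not a valid upper bound. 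Using the correct dimension $\rk F_1+1$ the calculation becomes simply
\[
O\bigl(n^{\rk F_1+1}\bigr)\cdot O\Bigl(\tfrac{|L|}{n^k}\,n^{k-\rk F_1}\Bigr)=O(n\,|L|)=O(n^{k-\delta}),
\]
with no need for the rank inequality $\rk F_1+\rk F_2\ge k-1$. This is exactly the paper's computation.
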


\begin{proof}
	By definition:
	$$
	O_f \leq \sum_{s \in S} \sum_{F \in \FO} \left| \left\{ a \in \F^k : F_1 s + F_2 a = f \right\} \right|
	$$
	If $F \in \FO_1$ then, by the definition of $\FO_1$, for all $s \in S$, $F_1 s + F_2 a = F_1 s \in \pi \left( L \right)$. But by assumption $f \notin L$, hence $F_1 s + F_2 a \neq f$ and this contributes nothing to the sum. Changing the order of summation we're left with:
	$$
	O_f \leq \sum_{F \in \FO_2} \sum_{s \in S} \left| \left\{ a \in \F^k : F_1 s + F_2 a = f \right\} \right|
	$$
	Rearranging:
	$$
	O_f
	\leq \sum_{F \in \FO_2} \sum_{a \in \F^k} \left| \left\{ s \in S : F_1 s + F_2 a = f \right\} \right|
	$$
	Now, for any $F \in \FO_2$ and $a \in \F^k$:
	$$
	\left\{ s \in S : F_1 s + F_2 a = f \right\} \subseteq \pi \left( S \right) \cap {F_1}^{-1} \left( f + F_2 a \right)
	$$
	Since $\pi \left( L \right)$ is affine-bounded and $S$ is a $K_k$-decomposition of $L$ we have, by Lemma \ref{lem:bounded_inverse}:
	$$
	\left| \pi \left( S \right) \cap {F_1}^{-1} \left( f + F_2 a \right) \right|
	= O \left( \frac{\left| L \right|}{n^k} \left| {F_1}^{-1} \left( f + F_2 a \right) \right| \right)
	$$
	Thus:
	$$
	O_f = \sum_{F \in \FO_2} \sum_{a \in \F^k : {F_1}^{-1} \left( f + F_2 a \right) \neq \emptyset } O \left( \frac{\left| L \right|}{n^k} \left| {F_1}^{-1} \left( f + F_2 a \right) \right| \right)
	$$
	By Proposition \ref{pr:fo_properties} item \ref{itm:fo_inverse_space} $\left\{ a \in \F^k : {F_1}^{-1} \left( f + F_2 a \right) \neq \emptyset \right\}$ is an affine space of dimension $\rk F_1 + 1$. For such $a$, $\left| {F_1}^{-1} \left( f + F_2 a \right) \right| = \left| \ker F_1 \right| = O \left( n^{k - \rk F_1} \right)$. Therefore:
	$$
	O_f = \sum_{F \in \FO_2} O \left( n^{\rk F_1 + 1} \frac{\left| L \right|}{n^k} n^{k - \rk F_1} \right) = O \left( n \left| L \right|\right) = O \left( n^{k - \delta} \right)
	$$
\end{proof}

\begin{prop}\label{pr:RGA}
	W.v.h.p.\ (over the internal randomness of the algorithm) Algorithm \ref{alg:RGA} doesn't abort.
\end{prop}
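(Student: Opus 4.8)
The plan is to analyse Algorithm~\ref{alg:RGA} as a random greedy process and show, by induction on $i$ together with a union bound over the $\left| S \right| = O \left( n^{k-1} \right)$ steps, that w.v.h.p.\ the set $\mathcal{V}_i \subseteq \mathcal{A}_{s_i}$ of absorbers that are facet-disjoint from $A_1 , \ldots , A_{i-1}$ and from $S \setminus \left\{ s_i \right\}$ has size at least $\frac{1}{2} \left| \mathcal{A}_{s_i} \right|$. Since $\left| \mathcal{A}_{s_i} \right| = \Omega \left( n^{k - M \varepsilon} \right) > 0$ by Proposition~\ref{pr:absorbers}, this forces $\mathcal{V}_i \neq \emptyset$, so the algorithm never aborts. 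Two bookkeeping facts should be recorded first. The facet-disjointness from $S \setminus \left\{ s_i \right\}$ is automatic: the facets of an absorber $A \in \mathcal{A}_{s_i}$ that lie in $L$ are exactly the facets of $s_i$ (those given by $\FO_1$), while every other facet of $A$ is covered by $A^{alg} \subseteq T$ and hence lies in $K_{k-1}^n \setminus L$; and since $S$ is a $K_k$-decomposition of $L$ we have $K_{k-1} \left( S \setminus \left\{ s_i \right\} \right) = L \setminus \binom{s_i}{k-1}$, which meets neither kind of facet. By the same reasoning, a facet shared by $A \in \mathcal{A}_{s_i}$ and a previously chosen $A_j$ necessarily lies outside $L$, so only the facets given by $\FO_2$ ever create conflicts.

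For the inductive step fix $i$, assume $\left| \mathcal{V}_j \right| \geq \frac{1}{2} \left| \mathcal{A}_{s_j} \right|$ for all $j < i$, and put $B_i = \left| \mathcal{A}_{s_i} \setminus \mathcal{V}_i \right|$, the number of absorbers for $s_i$ killed by a conflict with some $A_j$. For $j < i$ let $X_j = \left| \left\{ A \in \mathcal{A}_{s_i} : K_{k-1} \left( A \right) \cap K_{k-1} \left( A_j \right) \neq \emptyset \right\} \right|$, so $B_i \leq \sum_{j < i} X_j$, each $X_j$ is measurable with respect to $\mathcal{F}_j$ (the history through step $j$), and by Lemma~\ref{lem:absorber_cover} we get the deterministic bound $X_j \leq M \cdot \max_{f \notin L} O_f = O \left( n^{k - \delta} \right) =: C$, since $A_j$ has at most $M$ facets and each facet $f \notin L$ lies in at most $O_f$ of the absorbers of $\bigcup_{s \in S} \mathcal{A}_s$. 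For the conditional mean, $A_j$ is uniform in $\mathcal{V}_j$ given $\mathcal{F}_{j-1}$, hence
$$
\E \left[ X_j \mid \mathcal{F}_{j-1} \right] \leq \frac{1}{\left| \mathcal{V}_j \right|} \left| \left\{ \left( A, A' \right) \in \mathcal{A}_{s_j} \times \mathcal{A}_{s_i} : K_{k-1} \left( A \right) \cap K_{k-1} \left( A' \right) \cap \left( K_{k-1}^n \setminus L \right) \neq \emptyset \right\} \right| .
$$
The conflicting-pair count on the right is estimated with the linear-operator machinery (cf.\ Equation~\ref{eq:facet_intersect}): such a pair corresponds, for some $F , F' \in \FO_2$, to a solution $\left( a, a' \right) \in \F^{2k}$ of $F_2 a - F'_2 a' = F'_1 s_i - F_1 s_j$, an affine subspace whose size is controlled by bounding how often the ``defect'' $F'_1 s_i - F_1 s_j$ lies in the low-dimensional subspace $\im F_2 + \im F'_2$; this is exactly where the affine-boundedness of $L$ enters, via Lemma~\ref{lem:bounded_inverse}. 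Together with $\left| \mathcal{V}_j \right| = \Omega \left( n^{k - M \varepsilon} \right)$ this yields $\E \left[ X_j \mid \mathcal{F}_{j-1} \right] \leq \mu_j$ with $\mu := \sum_{j < i} \mu_j = o \left( n^{k - M \varepsilon} \right) = o \left( \left| \mathcal{A}_{s_i} \right| \right)$, provided $\varepsilon$ is small enough in terms of $\delta$ and $M$; this quantitatively realises the heuristic in the discussion preceding Lemma~\ref{lem:absorber_cover}.

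Having these ingredients, I would apply Lemma~\ref{lem:stochastic_bound} to $B_i \leq \sum_{j < i} X_j$ with the $\mu_j$ above and $C = O \left( n^{k - \delta} \right)$, picking a constant $c > 0$ with $\left( 1 + c \right) \mu < \frac{1}{2} \left| \mathcal{A}_{s_i} \right|$. Replacing $\mu$ by a larger valid bound if necessary, we may take $\mu$ to be a fixed positive power of $n$ that is still $o \left( \left| \mathcal{A}_{s_i} \right| \right)$, so $\mu / C$ is a positive power of $n$; Lemma~\ref{lem:stochastic_bound} then gives $\Prob \left[ B_i \geq \frac{1}{2} \left| \mathcal{A}_{s_i} \right| \right] \leq 2 \exp \left( - n^{\Omega \left( 1 \right)} \right) = n^{-\omegaone}$, conditionally on the invariant holding through step $i-1$. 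A union bound over the $O \left( n^{k-1} \right)$ steps then shows the invariant is maintained w.v.h.p., so Algorithm~\ref{alg:RGA} does not abort.

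The heart of the matter — and the main obstacle — is the estimate $\sum_{j < i} \E \left[ X_j \mid \mathcal{F}_{j-1} \right] = o \left( \left| \mathcal{A}_{s_i} \right| \right)$: one must show that the conflicting-pair count between $\mathcal{A}_{s_i}$ and $\mathcal{A}_{s_j}$ is small enough on average over $j < i$, which is exactly where the affine-boundedness of $L$ (through Lemmas~\ref{lem:bounded_inverse} and~\ref{lem:absorber_cover}) and the freedom to take $\varepsilon \ll \delta / M$ do all the work; the concentration step via Lemma~\ref{lem:stochastic_bound} and the induction itself are then routine.
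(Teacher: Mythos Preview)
Your proposal is correct and follows essentially the same strategy as the paper: induct on $i$, bound the number of excluded absorbers for $s_i$ by a martingale controlled via Lemma~\ref{lem:stochastic_bound}, with Lemma~\ref{lem:absorber_cover} (hence affine-boundedness of $L$) supplying the key estimate; both arguments land on the same constraint $\varepsilon < \delta/(2M)$. The parametrisation differs cosmetically---you set $X_j$ to be the number of absorbers for $s_i$ conflicting with $A_j$, whereas the paper fixes $F \in \FO_2$ and tracks, over $j$, which facets in $F_1 s_i + \im F_2$ get covered---but the computations are interchangeable: summing your $\mu_j$ over $j$ and rewriting as a sum over facets $f\notin L$ gives exactly $\sum_f O_f \cdot \left|\{A'\in\mathcal{A}_{s_i}:f\in K_{k-1}(A')\}\right| / \Omega(n^{k-M\varepsilon}) = O(n^{k+M\varepsilon-\delta})$, the paper's $\mu$. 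Your observation that facet-disjointness from $S\setminus\{s_i\}$ is automatic is a genuine simplification over the paper, which instead proves the weaker $B_i = o(|\mathcal{A}_i|)$ via a separate affine-boundedness computation: since the paper states that $A^{alg}\subseteq T$ is a $K_k$-decomposition of $K_{k-1}(A)\setminus K_{k-1}(s_i)$, every facet of $A$ outside $s_i$ lies in $K_{k-1}(T)$ and hence outside $L$, so indeed $B_i=0$. Two minor points: $M$ is the number of \emph{edges} in an absorber, not facets (the facet count is still $O(1)$, so your bound $X_j = O(n^{k-\delta})$ stands); and your inductive conditioning should, for full rigor, be phrased via a stopping time as the paper does, but this is routine.
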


\begin{proof}
	\newcommand{\pil}{{\pi \left( L \right)}}
	
	Write $\mathcal{A}_i \coloneqq \mathcal{A}_{s_i}$. Recall that by Proposition \ref{pr:absorbers} $\left| \mathcal{A}_i \right| = \Omega \left( n^{k - M \varepsilon} \right)$. Given $A_1 , A_2 , \ldots , A_{ i-1 }$ we'll call the absorbers in $\mathcal{A}_i$ that are facet disjoint from $A_1 , A_2 , \ldots , A_{i-1}$ and $S \setminus \left\{ s_i \right\}$ \termdef{permissible} and the remaining absorbers \termdef{excluded}. We'll show that w.v.h.p.\ at every step of the algorithm there are at least $\frac{1}{2} \left| \mathcal{A}_i \right| = \Omega \left( n^{k - M \varepsilon} \right)$ permissible absorbers. In particular, w.v.h.p.\ the algorithm doesn't abort.
	
	We first show that for every $1 \leq i \leq \left| S \right|$ all but $o \left( \left| \mathcal{A}_i \right| \right)$ absorbers in $\mathcal{A}_i$ are facet disjoint from $S \setminus \left\{ s_i \right\}$. Let $B_i$ be the number of absorbers for $s_i$ that intersect $S \setminus \left\{ s_i \right\}$ on a facet, i.e.:
	$$
	B_i = \left| \left\{ A \in \mathcal{A}_i : K_{k-1} \left( A \right) \cap K_{k-1} \left( S \setminus \left\{s_i \right\} \right) \neq \emptyset \right\} \right|
	$$
	We have:
	\begin{equation*}
	\begin{aligned}
	B_i
	& \leq \sum_{F \in \FO} \left| \left\{ a \in \F^k : F \begin{pmatrix} s_i \\ a \end{pmatrix} \in K_{k-1} \left( S \setminus \left\{ s_i \right\} \right) \right\} \right| \\
	& \leq \sum_{F \in \FO} \left| \ker F_2 \right| \left| \left( F_1 s_i + \im F_2 \right) \cap \pi \left( K_{k - 1} \left( S \setminus \left\{ s_i \right\} \right) \right) \right|
	\end{aligned}
	\end{equation*}
	Now, for every $F \in \FO_1$, $F_1 s_i + \im F_2 = F_1 s_i \in K_{k-1} \left( s_i \right)$ and, since $S$ is a collection of facet disjoint edges, this is disjoint from $\leave$. So we're left with:
	\begin{gather*}
		\begin{align*}
			B_i &\leq \sum_{F \in \FO_2} \left| \ker F_2 \right| \left| \left( F_1 s_i + \im F_2 \right) \cap \pi \left( K_{k - 1} \left( S \setminus \left\{ s_i \right\} \right) \right) \right|\\
			&\leq \sum_{F \in \FO_2} \left| \ker F_2 \right| \left| \left( F_1 s_i + \im F_2 \right) \cap \pi \left( L \right) \right|
		\end{align*}
	\end{gather*}
	By Proposition \ref{pr:fo_properties} item \ref{itm:im_dim} $F_1 s_i + \im F_2$ is an affine space of dimension at least $1$. Since $\pi \left( L \right)$ is affine-bounded, we have:
	$$
	B_i
	= O \left( \sum_{F \in \FO_2} \left| \ker F_2 \right| \left| \im F_2 \right| \frac{\left| L \right|}{n^{k - 1}} \right)
	= O \left( n^{k - \delta} \right)
	= o \left( \left| \mathcal{A}_i \right| \right)
	$$
	The last equality holds provided $\varepsilon < \frac{\delta}{M}$.
	
	Next we demonstrate that w.v.h.p.\ the choices of $A_1 , A_2 , \ldots , A_{i-1}$ leave many permissible absorbers for $s_i$. We'll define the stopping time $\tau$ as the smallest $i$ s.t.\ there are fewer than $\frac{\left| \mathcal{A}_i \right|}{2}$ permissible absorbers in $\mathcal{A}_i$ and $\tau = \infty$ if there is no such $i$. It's enough to show that for each $i$:
	\begin{equation}\label{eq:many_absorbers}
	\Pr \left[ \tau = i \given \tau \geq i \right] = n^{-\omegaone}
	\end{equation}
	
	Fix $ 1 \leq i \leq \left| S \right|$ and assume, inductively, that Equation \ref{eq:many_absorbers} holds for $j < i$. We'd like to bound (w.v.h.p.) the number of absorbers in $\mathcal{A}_i$ excluded by the choices of $A_1 , A_2 , \ldots , A_{i-1}$. For each $F \in \FO$, let $E_F$ be the number of elements $a \in \F^k$ s.t.\ $f \coloneqq F_1 \left( s_i \right) + F_2 \left( a \right) \notin \pil$ and $f$ has been covered by one of $A_1 ,\ldots , A_{i-1}$. The number of excluded absorbers is bounded above by $B_i + \sum_{F \in \FO} E_F$. We'll show that w.v.h.p.\ $E_F = o \left( \left| \mathcal{A}_{s_i} \right|\right)$ for every $F$.
	
	First, if $F \in \FO_1$, then for every $a \in \F^k$, $F \begin{pmatrix} s_i \\ a \end{pmatrix} = F_1 s_i \in \pil$, so by definition $E_F = 0$. Otherwise $F \in \FO_2$. We'll bound $E_F$ by an application of Lemma \ref{lem:stochastic_bound}. The filtration is the successive choices of $A_1 , \ldots , A_{i-1}$, and we bear in mind that we've conditioned on $\tau \geq i$. For $f \in \F^{k-1} \setminus \pi \left( L \right)$, and $j<i$ let $X_f^j$ be the indicator of the event that $f$ is covered by $A_j$. Then:
	$$
	E_F \leq \sum_{j<i} \left| \ker F_2 \right| \sum_{f \in F_1 s_i + \im F_2} X_f^j
	$$
	For $j < i$ set:
	$$
	E_F^j = \left| \ker F_2 \right| \sum_{f \in F_1 s_i + \im F_2} X_f^j
	$$
	We next bound the maximal conditional expectation. Let $j < i$. In the following, the maximum is taken over all choices $A_1 , A_2 ,  \ldots , A_{j-1}$ s.t.\ $\Prob \left[ A_1 , \ldots , A_{j-1} \
	\given \tau \geq i \right] > 0$. By the inductive hypothesis this is a non-empty set:
	\begin{gather*}
	\begin{align*}
		\mu_j & \coloneqq \max_{A_1 , A_2 , \ldots , A_{j-1}} \E \left[ E_F^j \given A_1 , A_2 , \ldots A_{j-1} , \tau \geq i \right] \\
		& \leq \max_{A_1 , A_2 , \ldots , A_{j-1}} \frac{1}{\Prob \left[ \tau \geq i \right]} \E \left[ E_F^j \given A_1 , \ldots , A_{j-1} , \tau > j \right]
	\end{align*}
	\end{gather*}
	By the inductive hypothesis $\Prob \left[ \tau \geq i \right] = 1 - \oone$. Furthermore, $\tau > j$ implies that there are at least $\frac{1}{2} \left| \mathcal{A}_j \right| = \Omega \left( n^{k - M \varepsilon} \right)$ permissible absorbers for $s_j$. Therefore:
	$$
	\mu_j \leq O \left(\frac{\left| \ker F_2 \right|}{n^{k - M \varepsilon}}\right) \sum_{f \in F_1 s_i + \im F_2} \left| \left\{ A \in \mathcal{A}_j : f \in K_{k-1} \left( A \right) \right\} \right|
	$$
	Hence:
	\begin{gather*}
		\begin{align*}
			\mu \coloneqq \sum_{j<i} \mu_j
			&\leq O \left(\frac{\left| \ker F_2 \right|}{n^{k - M \varepsilon}}\right) \sum_{f \in F_1 s_i + \im F_2} \sum_{j < i} \left| \left\{ A \in \mathcal{A}_j : f \in K_{k-1} \left( A \right) \right\} \right|\\
			&\leq O \left(\frac{\left| \ker F_2 \right|}{n^{k - M \varepsilon}}\right) \sum_{f \in F_1 s_i + \im F_2} O_f
		\end{align*}
	\end{gather*}
	By Lemma \ref{lem:absorber_cover} $O_f = O \left( n^{k-\delta} \right)$. Therefore:
	$$
	\mu = O \left( \frac{\left| \ker F_2 \right|}{n^{k- M \varepsilon}} \right) \left| \im F_2 \right| O \left( n^{k- \delta } \right)
	= O \left( n^{k + M \varepsilon - \delta} \right)
	$$
	Since every absorber covers only $O \left( 1 \right)$ facets, regardless of the choice of $A_j$, $E_F^j = O \left( \left| \ker F_2 \right| \right)$. By Proposition \ref{pr:fo_properties} item \ref{itm:im_dim} $\dim \ker F_2 = k - \dim \im F_2 \leq k - 1$, so $E_F^j = O \left( n^{k-1} \right)$. Applying Lemma \ref{lem:stochastic_bound} we conclude that w.v.h.p.:
	$$
	E_F = O \left( n^{k + M \varepsilon - \delta} \right) = o \left( n^{k - M \varepsilon} \right)
	$$
	Provided $\varepsilon < \frac{\delta}{2 M}$, the last equality implies $E_F = o \left( \left| \mathcal{A}_i \right| \right)$, and Equation \ref{eq:many_absorbers} holds.
	
\end{proof}

\section{Modifying the Proof for $k \geq 5$}\label{sec:proof_k5}

The crucial part of the proof in the previous section was the fact, established in Proposition \ref{pr:absorbers}, that every edge can be embedded in many absorbers. In turn, this relied on Proposition \ref{pr:vo_properties} which showed that for every $x \in K_k^n$ there are $\Omega \left( n^k \right)$ vectors $a \in \F^k$ for which $\left\{ V \xa \right\}_{V \in \VO}$ are distinct and non-zero. This used the fact that if $k \leq 4$ there are no sets of $k-2$ distinct elements of $\F^*$ whose sum is $0$. This is not true for $k \geq 5$. In order to ensure that every edge has many absorbers we modify the proof by taking a richer template, as follows:

Let $\pi_1 , \pi_2 , \ldots , \pi_{k+1} : \left[ n \right] \hookrightarrow \F^*$ be independent uniformly random injections. We define a sequence of facet-disjoint templates and take $T$ to be their union:
\begin{align*}
\forall j \in \left[ k+1 \right], & T_j = \left\{ x \in H : \sum_{i=1}^k \pi_j x_i = 0 \land \forall y \in T_1 \cup \ldots \cup T_{j-1}, \left| x \cap y \right| \leq k-2 \right\} \\
& T = T_1 \cup T_2 \cup \ldots \cup T_{k+1}
\end{align*}

We first show that with high probability for every $x \in K_k^n$ there exists an injection $\pi_i$ s.t.\ $x$ contains no $\left(k-2\right)$-set whose sum is $0$ under $\pi_i$.

\begin{prop}\label{pr:no_null_sums}
	The probability (over the choice of $\pi_1 , \ldots , \pi_{k+1}$) that for all $x \in K_k^n$ there exists some $1 \leq i \leq k+1$ s.t.\ none of the $\left(k-2\right)$-sets contained in $x$ sum to $0$ under $\pi_i$ is $1 - O \left( \frac{1}{n} \right)$.
\end{prop}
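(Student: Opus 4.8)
The plan is to fix an edge $x \in K_k^n$, bound the probability that $x$ is ``bad'' (i.e.\ contains a zero-sum $\left(k-2\right)$-set) simultaneously for all $k+1$ injections, and then take a union bound over the $O\left(n^k\right)$ choices of $x$. Two facts drive the argument: the injections $\pi_1,\ldots,\pi_{k+1}$ are mutually independent, so the corresponding bad events are independent across $i$; and for a single injection the bad probability is $O\left(1/n\right)$, so $k+1$ independent repetitions give $O\left(n^{-(k+1)}\right)$, which beats the $O\left(n^k\right)$ union bound with room to spare.

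Concretely, I would fix $x = \left\{ x_1,\ldots,x_k \right\} \in K_k^n$ and, for $i \in \left[ k+1 \right]$, let $B_i$ be the event that $\sum_{v \in S} \pi_i\left(v\right) = 0$ for some $S \in \binom{x}{k-2}$. Since $B_i$ is a function of $\pi_i$ alone and the $\pi_i$ are independent, $\Prob\left[ \bigcap_{i=1}^{k+1} B_i \right] = \prod_{i=1}^{k+1} \Prob\left[ B_i \right]$. To bound $\Prob\left[ B_i \right]$ (which by symmetry does not depend on $i$), note there are $\binom{k}{k-2} = O\left(1\right)$ choices of $S$, so it suffices to bound $\Prob\left[ \sum_{v \in S}\pi_i\left(v\right) = 0 \right]$ for a fixed $S$ with $\left| S \right| = k-2$. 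The restriction $\pi_i|_S$ is a uniformly random injection of $S$ into $\F^*$, so this probability equals the fraction of ordered $\left(k-2\right)$-tuples of distinct elements of $\F^*$ whose sum is $0$. The total number of such tuples is $\Theta\left(n^{k-2}\right)$ since $\left| \F^* \right| = 2^m - 1 = \Theta\left(n\right)$, while the number of those summing to $0$ is $O\left(n^{k-3}\right)$: choose the first $k-3$ coordinates to be arbitrary distinct nonzero elements in $\Theta\left(n^{k-3}\right)$ ways (here we use $k \ge 5$, so $k-3 \ge 2$ and this count is genuinely $\Theta\left(n^{k-3}\right)$), after which the last coordinate is forced, since in characteristic $2$ it must equal the sum of the others. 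Hence $\Prob\left[ B_i \right] = O\left(1/n\right)$ and $\Prob\left[ \bigcap_{i=1}^{k+1} B_i \right] = O\left(n^{-(k+1)}\right)$.

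Finally, a union bound over the $\binom{n}{k} = O\left(n^k\right)$ edges $x \in K_k^n$ yields that the probability that some $x$ is bad for all $k+1$ injections is $O\left(n^k \cdot n^{-(k+1)}\right) = O\left(1/n\right)$, and the complement is exactly the asserted event. There is essentially no obstacle here — the only subtlety is the zero-sum counting and, in particular, the point that fixing all but one coordinate of a $\left(k-2\right)$-tuple still leaves $\Theta\left(n^{k-3}\right)$ (rather than $O\left(1\right)$) admissible values, which is precisely the failure mode that the $k \le 4$ case avoided differently; the crude per-injection bound $O\left(1/n\right)$ is far more than needed.
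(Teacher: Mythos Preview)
Your proof is correct and follows essentially the same approach as the paper: bound the per-injection bad probability by $O\left(1/n\right)$, use independence of the $\pi_i$ to get $O\left(n^{-(k+1)}\right)$ for a fixed edge, and union bound over the $O\left(n^k\right)$ edges. The only difference is that you spell out the zero-sum counting in more detail, while the paper asserts the $O\left(1/n\right)$ bound without elaboration.
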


\begin{proof}
	Observe that for every $i$, the probability that $x \in K_k^n$ contains a $\left(k-2\right)$-set that sums to $0$ under $\pi_i$ is $O \left( \frac{1}{n} \right)$. Since the $\pi_i$s are independent the probability that this will hold for all the injections is $O \left( \frac{1}{n^{k+1}} \right)$. There are $O \left( n^k \right)$ edges, so by a union bound the probability that this will hold for at least one of them is at most $O \left( n^k \frac{1}{n^{k+1}} \right) = O \left( \frac{1}{n} \right)$.
\end{proof}

\begin{rmk}
	Although the condition in Proposition \ref{pr:no_null_sums} is satisfied only with probability $1 - O \left( \frac{1}{n} \right)$ rather than w.v.h.p.\ we'll still be able to find an $\steinParams$-design in $H$ w.v.h.p. This is because the probability space in Proposition \ref{pr:no_null_sums} refers only to the choice of $\pi_1 , \ldots , \pi_{k+1}$, and not the choice of $H$. So we may repeat the choice of the injections until the condition in Proposition \ref{pr:no_null_sums} is satisfied, and then proceed with the proof, where the remaining assertions hold w.v.h.p.\ even with this conditioning.
\end{rmk}

If we fix one of the injections $\pi_i$, the definitions in section \ref{ssec:template} make sense if we identify $x \in \left[ n \right]$ with $\pi_i x$. In particular, Propositions \ref{pr:vo_properties} and \ref{pr:absorbers} hold for every $x \in K_k^n$ containing no set of $k-2$ vertices whose sum (under $\pi_i$) is $0$. It's important to note that for each $i$ and each facet $f \in K_{k-1}^n$ the probability that $f$ is covered by $\pi_i$ is $O \left( p \right)$. Thus relatively few edges $x \in H$ for which $\sum_{\ell = 1}^{n} \pi_i x_\ell = 0$ aren't facet disjoint from $T_1 \cup \ldots \cup T_{i-1}$. This allows the probabilistic argument used to prove Proposition \ref{pr:absorbers} to work with minor modifications. As a consequence we obtain the following analogue of Proposition \ref{pr:absorbers}:

\begin{prop}
	For every $x \in K_k^n$ there exists some $i_x \in \left[ k+1 \right]$ s.t.\ there are $\Omega \left( n^{k - M \varepsilon} \right)$ absorbers for $x$ under $\pi_{i_x}$.
\end{prop}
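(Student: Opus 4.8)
The plan is to re-run the argument of Proposition~\ref{pr:absorbers} with respect to a single injection $\pi_{i_x}$, inserting one extra ingredient to deal with the fact that the relevant template is now $T_{i_x}$ rather than the whole set of $\pi_{i_x}$-algebraic edges of $H$. As explained in the remark following Proposition~\ref{pr:no_null_sums}, we resample $\pi_1,\dots,\pi_{k+1}$ until the event of that proposition holds, so that every $x\in K_k^n$ admits a witness $i_x\in[k+1]$ for which no $(k-2)$-subset of $x$ sums to $0$ under $\pi_{i_x}$. Fix such an $x$, write $i=i_x$, and throughout identify $y\in[n]$ with $\pi_i(y)\in\F^*$, so that the constructions of Section~\ref{ssec:template} make sense. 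Since $x$ has no null $(k-2)$-set under $\pi_i$, the remarks attached to Propositions~\ref{pr:vo_properties} and~\ref{pr:absorbers} apply to $x$, and the first half of the proof of Proposition~\ref{pr:absorbers} goes through verbatim: conditioning on $\pi_i$ restricted to $x$, there are $\Omega\left(n^k\right)$ vectors $a$ with $\left\{V\xa:V\in\VO\right\}$ distinct and nonzero, and Lemma~\ref{lem:random_injection} shows that w.v.h.p.\ the set $\mathcal{A}_x''$ of those $a$ whose would-be absorber vertex set lands in $\pi_i\left([n]\right)$ still has size $\Omega\left(n^k\right)$.

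The only new point is that, having revealed $H$, a vector $a\in\mathcal{A}_x''$ spans a genuine absorber for $x$ (under $\pi_i$) only if, in addition to all $M=M(k)$ edges of the would-be absorber lying in $H$, the $O(1)$ edges that are supposed to be algebraic -- the edges of $A^{alg}$, each automatically of vanishing $\pi_i$-coordinate-sum -- actually lie in $T_i$; equivalently, none of them shares a facet with any edge of $T_1\cup\dots\cup T_{i-1}$. I would handle this by revealing $H$ in two rounds. Round one reveals the indicators of all edges that are $\pi_j$-algebraic for some $j<i$; this determines $T_1,\dots,T_{i-1}$ and hence the set $\CB\subseteq K_{k-1}^n$ of facets they cover. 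Given round one, the $a\in\mathcal{A}_x''$ whose would-be algebraic edges all avoid $\CB$ form a deterministic set $\mathcal{A}_x^{(1)}$, and on round two (the remaining indicators, independent of round one) the conditional probability that a fixed $a\in\mathcal{A}_x^{(1)}$ spans a genuine absorber is $p^M$ -- except for the $O\left(n^{k-1}\right)$ exceptional $a$ for which some absorber edge is itself $\pi_j$-algebraic, which we discard. So, conditionally, the number of genuine absorbers for $x$ has expectation $p^M\left|\mathcal{A}_x^{(1)}\right|$, and the concentration step is literally that of Proposition~\ref{pr:absorbers} (Proposition~\ref{pr:eo_properties} plus Lemma~\ref{lem:azuma_concentration}). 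It therefore suffices to show that w.v.h.p.\ over round one, $\left|\mathcal{A}_x^{(1)}\right|=\Omega\left(n^k\right)$.

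I would bound the number of $a\in\mathcal{A}_x''$ with some algebraic edge meeting $\CB$ by $\sum_{G,m}\left|\pi_i\left(\CB\right)\cap A_{G,m}\right|\cdot\left|\ker\left(P_mG\right)_2\right|$, where $G$ runs over the $O(1)$ edge operators that produce the algebraic edges, $m\in[k]$, and $A_{G,m}=\left(P_mG\right)_1x+\im\left(P_mG\right)_2$ is an affine space of dimension $\rk\left(P_mG\right)_2$ (one checks that each facet of an algebraic edge depends nontrivially on $a$, so $\left(P_mG\right)_2\neq 0$ and $\dim A_{G,m}\geq 1$). Since $\left|A_{G,m}\right|\cdot\left|\ker\left(P_mG\right)_2\right|=\Theta\left(n^k\right)$, the crux is that $\pi_i\left(\CB\right)$ is $O(1)$-affine-bounded w.v.h.p.: $\CB$ is the set of facets of the $O\left(n^{k-1-\varepsilon}\right)$ edges making up $T_1\cup\dots\cup T_{i-1}$, hence has density $O(p)=O\left(n^{-\varepsilon}\right)$ and, being built from so few edges, has concentrated codegrees; and since $\pi_i$ is independent of everything revealed in round one, the image of such a typical facet set under the uniformly random injection $\pi_i$ is affine-bounded -- an estimate of the kind underlying Lemma~\ref{lem:bounded_inverse} and the Nibble analysis in Section~\ref{ssec:nibble}. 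Granting this, each summand is $O\left(n^{-\varepsilon}\right)\cdot\Theta\left(n^k\right)=O\left(n^{k-\varepsilon}\right)=o\left(n^k\right)$, so $\left|\mathcal{A}_x^{(1)}\right|\geq\left|\mathcal{A}_x''\right|-o\left(n^k\right)=\Omega\left(n^k\right)$, and a union bound over the $O\left(n^k\right)$ edges $x$ finishes the proof.

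The main obstacle is precisely the affine-boundedness of $\pi_i\left(\CB\right)$. The naive bound ``$\CB$ has only $O\left(n^{k-1-\varepsilon}\right)$ facets'' is insufficient, because some facet operators $P_mG$ have rank as low as $1$, so a single blocked facet lying on the wrong line could exclude up to $n^{k-1}$ vectors $a$; only the affine-bounded spreading of $\pi_i\left(\CB\right)$ over all low-dimensional affine spaces rescues the count. Establishing it calls for a typicality analysis of $T_1\cup\dots\cup T_{i-1}$ -- whose codegrees concentrate essentially because each facet is covered by each $\pi_j$ with probability $O(p)$, as noted in the text -- together with a concentration bound for $\left|\pi_i\left(\CB\right)\cap A\right|$ over the random injection $\pi_i$, uniform over the polynomially many affine spaces $A$. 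Modulo this, the argument is a faithful reprise of Proposition~\ref{pr:absorbers}.
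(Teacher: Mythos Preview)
Your plan is exactly in the spirit of the paper's one-line proof (``the probabilistic argument used to prove Proposition~\ref{pr:absorbers} works with minor modifications''), and you correctly isolate the one genuinely new obstruction: the algebraic edges of a would-be absorber must lie in $T_i$, not merely be $\pi_i$-algebraic members of $H$, and this is governed by the facet set $\CB=K_{k-1}\left(T_1\cup\cdots\cup T_{i-1}\right)$. Your strategy of showing $\CB$ is suitably ``spread out'' so that it disqualifies only $o\!\left(n^k\right)$ candidates $a$, and then rerunning the Azuma step of Proposition~\ref{pr:absorbers}, is the right one.

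Two points in the execution do not go through as written.

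\emph{Affine-boundedness via randomness of $\pi_i$.} You argue that $\pi_i\!\left(\CB\right)$ is affine-bounded because ``$\pi_i$ is independent of everything revealed in round one''. But $\pi_i$ has already been fixed---you used it to determine $i_x$ and $\mathcal{A}_x''$---so you cannot re-randomize it. Worse, even if you could, the Lipschitz constant of $\left|A\cap\pi_i\!\left(\CB\right)\right|$ under a transposition of $\pi_i$ is $O\!\left(n^{k-2}\right)$ (a single vertex lies in that many facets of $\CB$), and Lemma~\ref{lem:random_injection} then fails to concentrate around the target $O\!\left(np\right)$ once $k\geq 3$. The fix is to use the randomness of $H$ instead: with all $\pi_j$ fixed, for each affine line $A\subseteq\F^{k-1}$ the quantity $\left|A\cap\pi_i\!\left(\CB\right)\right|$ is bounded by $\left|A\cap\pi_i\!\left(\CB^{*}\right)\right|$ (where $\CB^{*}$ uses $T_j^{*}$ in place of $T_j$), and the latter depends on only the $O\!\left(n\right)$ Bernoulli$\left(p\right)$ indicators of the $\pi_j$-completions of facets $f$ with $\pi_i\!\left(f\right)\in A$, with $O\!\left(1\right)$ Lipschitz effect. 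Lemma~\ref{lem:boolean_concentration} then gives $\left|A\cap\pi_i\!\left(\CB\right)\right|=O\!\left(np\right)$ w.v.h.p., uniformly over the polynomially many lines $A$.

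\emph{The ``$O\!\left(n^{k-1}\right)$ exceptional $a$'' claim.} This is only the expectation over $\pi_j$; it is not a deterministic bound, and concentration via Lemma~\ref{lem:random_injection} again fails (a transposition of $\pi_j$ can toggle $\Theta\!\left(n^{k-2}\right)$ algebraic edges, each disqualifying up to $\Theta\!\left(n^{k-1}\right)$ values of $a$). Since every absorber edge shares a facet with some edge of $A^{alg}$, an exceptional $a$ genuinely kills the ``valid'' event under your stricter definition, so this set must be controlled. One clean way to bypass the two-round split is to run the Azuma step of Proposition~\ref{pr:absorbers} directly on the stricter valid count as a function of $H$: the new ``blocking'' edges $e_{j,f}$ contribute to $\sum_e b_e^2$ a term that, after the same reshuffling as in Proposition~\ref{pr:absorbers} (now with $F\in\FO_2$ and $\rk F_2\geq 1$ in place of $E\in\EO$), is again $O\!\left(n^{2k-1}\right)$; but note that you will still need the non-exceptional set to have size $\Omega\!\left(n^k\right)$ for the expectation to be large, so this gap does need an argument beyond what you have sketched.
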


For every $x \in K_k^n$ we let $\mathcal{A}_x$ be the set of absorbers for $x$ under $i_x$.

Proceeding to the Nibble, only minor modifications of the analysis in Appendix \ref{app:nibble} are needed to show that Proposition \ref{pr:leave} holds with item \ref{itm:affine_bounded_leave} (affine-boundedness of $L$) replaced with the condition, ``$\left( L , \pi_i \right)$ is $C$-affine bounded for every $i \in \left[ k+1 \right]$''.

As before, we apply Keevash's theorem to obtain $S$. We then apply Algorithm \ref{alg:RGA} to choose an absorber for every $x \in S$. The proof of Proposition \ref{pr:RGA} (that w.v.h.p.\ Algorithm \ref{alg:RGA} doesn't abort) goes through almost as before, where slight modifications are needed to take into account the fact that the absorbers for each $x \in S$ are w.r.t.\ $\pi_{i_x}$.

\section{Generalizations: Latin Squares, One Factorizations, and High-Dimensional Permutations}\label{sec:generalization}

Steiner systems are perhaps the simplest examples of $K_k$-decompositions of hypergraph families (specifically, of $K_{k-1}^n$). There are other hypergraph families one may wish to decompose, and for which the threshold problem is interesting. We mention a few here.

\begin{itemize}
	\item An order-$n$ \termdef{Latin square} is an $n \times n$ matrix in which each line and column is a permutation of $\left[ n \right]$. This is equivalent to a triangle (i.e.\ $K_3$)-decomposition of the complete tripartite graph with $n$ vertices in each part.
	
	\item An order-$n$ \termdef{one factorization} is an edge-coloring of the complete graph $K_n$ using $n-1$ colors (this exists iff $n$ is even). Equivalently, a one factorization is a triangle-decomposition of the graph join of $K_n$ with $n-1$ isolated vertices.
	
	\item An order-$n$ \termdef{$k$-dimensional permutation} has been defined by Linial and Luria \cite{linial2014upper} as an $\left[ n \right]^{k+1}$ $\left\{ 0,1 \right\}$-array in which every line (note that there are $k+1$ possible directions) contains a single one. For $k=1$ this is simply a permutation matrix and for $k=2$ this is equivalent to a Latin square. An order-$n$ $k$-dimensional permutation is equivalent to a $K_k$-decomposition of the complete $k$-partite $\left(k-1\right)$-uniform hypergraph with $n$ vertices in each part.
\end{itemize}

Just as with Steiner systems, for each of these families a lower bound on the threshold for their appearance is the threshold for the disappearance of uncovered facets, i.e.\ $\Theta \left( \frac{\log n }{n} \right)$. With the exception of $1$-dimensional permutations (which are just perfect matchings in bipartite graphs) to the best of our knowledge there are no published upper bounds. It seems likely that an analogue of Keevash's Theorem \ref{thm:Keevash} for these objects would imply an upper bound on the threshold of the form $n^{-\varepsilon}$. In particular, the argument from Section \ref{sec:proof_k4} should go through with minor modifications. The crux of the matter is the observation that our absorbers are $k$-partite, and therefore embed naturally in the hypergraphs above.

It is our hope that analogues of Keevash's result for the above objects will soon become available. In particular, adapting the proof of \cite{Ke15} (which is specialized to $\left(n,3,2\right)$-designs, a.k.a.\ Steiner triple systems) to Latin squares and one factorizations should be straightforward. Beyond the threshold problem, Keevash's result has advanced the understanding of random designs considerably (for example, see \cite{linial2015discrepancy} and \cite{kwan2016almost}), and we hope appropriate generalizations will do the same for other objects as well.

\section{Conjectures and Related Problems}\label{sec:future}

The most natural open problem related to this paper is the determination of the exact threshold for the appearance of Steiner systems in random hypergraphs. We've already mentioned the lower bound of $\Omega \left( \frac{\log n }{n} \right)$, obtained by considering uncovered facets. In the case of perfect matchings in hypergraphs, the lower bound arising from isolated vertices \cite{schmidt1983threshold} turned out to be correct \cite{johansson2008factors}. A natural, if bold, conjecture is therefore:

\begin{conj}\label{conj:steiner_thresh}
	The threshold for the appearance of an $\steinParams$-Steiner system in $\hnp$ is $\Theta \left( \frac{\log n}{n} \right)$.
\end{conj}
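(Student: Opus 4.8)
Conjecture \ref{conj:steiner_thresh} is open; here we indicate what seems to us the most promising line of attack, and where it gets stuck. The lower bound $\Omega\left( \frac{\log n}{n}\right)$ is already in hand: if $p = o\left( \frac{\log n}{n}\right)$ then the number of $\left(k-1\right)$-sets contained in no edge of $H \sim \hnp$ has expectation $\binom{n}{k-1}\left(1-p\right)^{n-k+1} = \omegaone$, and a routine second-moment computation shows it is positive w.h.p.; an uncovered facet forbids any design. So the content of the conjecture is the matching upper bound: for some constant $C = C\left(k\right)$ and all large $n \in N_k$, if $p \geq \frac{C \log n}{n}$ then $H \sim \hnp$ contains an $\steinParams$-Steiner system w.h.p.

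The first thing to try is to rerun the architecture of Theorem \ref{thm:main} --- algebraic template, Nibble, Keevash's Theorem \ref{thm:Keevash}, algebraic absorption --- at density $\frac{C\log n}{n}$ rather than $n^{-\varepsilon}$. This breaks down, and seeing why isolates the real difficulty. At $p = \Theta\left( \frac{\log n}{n}\right)$ every facet lies in only $\Theta\left( \log n\right)$ edges of $H$, so there is no polynomial room anywhere: the expected number of algebraic edges through a given facet is $\Theta\left(p\right) = \oone$, so the template of Section \ref{ssec:template} is essentially empty; the estimate $\left| \mathcal{A}_x\right| = \Omega\left( n^{k - M\varepsilon}\right)$ of Proposition \ref{pr:absorbers} collapses; and Algorithm \ref{alg:RGA}, which relies on having polynomially many permissible absorbers at each step, has nothing to draw on. Making this work would require a genuinely new absorbing gadget, tolerant of the regime where each facet is covered by only $O\left(1\right)$ edges of $H$ --- perhaps built recursively over a partition of $\left[n\right]$ into blocks, or from pseudo-random rather than algebraic substructures --- but there is no clear candidate.

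A more promising route bypasses explicit absorption and reduces the problem to a spreadness estimate fed into the fractional Kahn--Kalai machinery. First, for large $k$-divisible $n$, put a probability measure $\mu$ on the set $\mathcal{S}_n$ of $\steinParams$-Steiner systems on $\left[n\right]$ --- the uniform measure, or, more tractably, the output distribution of a random greedy / nibble-type construction of a design in the spirit of Keevash's probabilistic constructions, which comes with step-by-step control of conditional probabilities. Second, show $\mu$ is $q$-spread for $q = O\left( \frac{1}{n}\right)$: for every partial design $P \subseteq \binom{\left[n\right]}{k}$, $\mu\left( \left\{ S \in \mathcal{S}_n : P \subseteq S\right\}\right) \leq q^{\left|P\right|}$. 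Since a Steiner system uses $\Theta\left( n^{k-1}\right)$ of the $\binom{n}{k}$ possible edges, $q = \Theta\left( \frac{1}{n}\right)$ is the best one could hope for, and it is the right target. Third, apply the Frankston--Kahn--Narayanan--Park theorem (equivalently, Park--Pham's resolution of the Kahn--Kalai conjecture): a $q$-spread measure on a family whose members have size $\ell$ is contained in a $p$-random subset of the ground set w.h.p.\ once $p \geq C q \log \ell$; here $\ell = \Theta\left( n^{k-1}\right)$, so $\log \ell = \Theta\left( \log n\right)$, and $q = \Theta\left( \frac{1}{n}\right)$ yields exactly $p = O\left( \frac{\log n}{n}\right)$, matching Conjecture \ref{conj:steiner_thresh}.

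The whole difficulty is concentrated in the second step, the $O\left(1/n\right)$-spreadness of a random design, and it is in effect the full content of the conjecture. Unwinding the definition, one must show that for every partial design $P$ with $\left|P\right| = t$, the number of $\steinParams$-Steiner systems containing $P$ is at most $\left( C/n\right)^{t}$ times $\left|\mathcal{S}_n\right|$, \emph{uniformly} in $t$ up to $t = \Theta\left( n^{k-1}\right)$. For bounded $t$ this is essentially a consequence of Keevash's counting and the pseudo-randomness of typical designs exploited in \cite{linial2015discrepancy} and \cite{kwan2016almost}; the obstruction is the range of large $t$, where one needs a statement to the effect that a uniformly random Steiner system, conditioned on an arbitrary partial design, still behaves like a sparsified random $k$-graph down to scales with only $O\left(1\right)$ edges left --- a self-correcting rigidity phenomenon for designs well beyond current technology. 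The alternative of adapting the Johansson--Kahn--Vu entropy method \cite{johansson2008factors} directly runs into the same wall, since it too demands conditional refinements of the $\left( 1 + \oone\right)^{n^{k-1}}$-type enumeration of designs due to Keevash and to Glock--K\"uhn--Lo--Osthus \cite{glock2016existence}. Any of these routes stands or falls on such a uniform spreadness bound, which is precisely why Conjecture \ref{conj:steiner_thresh} remains open.
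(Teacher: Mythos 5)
You have correctly identified that this is a conjecture, not a theorem, and that the paper offers no proof of it; the paper merely states it in Section \ref{sec:future} alongside a short discussion of how far Theorem \ref{thm:main} falls short (the constraint $\varepsilon < \delta/(2M)$, with $M = 2^{2k-1}-2^k+1$ and $\delta$ limited by the non-explicit constant $a$ in Keevash's Theorem \ref{thm:Keevash}, caps the method at roughly $n^{-1/(2M)+\oone}$ even in the most optimistic case). Your diagnosis of why the algebraic-template architecture collapses at $p = \Theta\left(\frac{\log n}{n}\right)$ --- the template becomes essentially empty, the $\Omega\left(n^{k-M\varepsilon}\right)$ absorber count of Proposition \ref{pr:absorbers} loses all its slack, and Algorithm \ref{alg:RGA} has nothing to choose from --- is exactly consistent with the paper's own accounting, and is a fair and concrete articulation of the bottleneck. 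Your second route, reducing the conjecture to an $O\left(1/n\right)$-spreadness bound for a distribution over $\steinParams$-Steiner systems and then invoking the fractional Kahn--Kalai machinery of Frankston--Kahn--Narayanan--Park, lies outside the paper entirely but is a well-calibrated reduction: the arithmetic ($\ell = \Theta\left(n^{k-1}\right)$, $q = \Theta\left(1/n\right)$, giving $p = O\left(\frac{\log n}{n}\right)$) is right, and your identification of uniform spreadness over \emph{all} partial designs, including those of size $\Theta\left(n^{k-1}\right)$, as the genuine obstruction is the correct place to locate the difficulty. In short, there is nothing to reconcile with a proof in the paper --- none exists --- and your writeup is an accurate and useful assessment of the problem's status and of the most plausible line of attack.
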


How far is our result from Conjecture \ref{conj:steiner_thresh}? The statement of Theorem \ref{thm:main} asserts nothing regarding the value of $\varepsilon$. Scanning the proof, we see that the main constraint on $\varepsilon$, at the end of the proof of Proposition \ref{pr:RGA}, is $\varepsilon < \frac{\delta}{2 M}$. Recall that $M = 2^{2k - 1} - 2^k + 1$ is the number of edges in an absorber and $n^{-\delta}$ is the upper bound on the density of $L$ (the facets uncovered by the Template and Nibble). In turn, the density of $L$ is bounded below by $n^{-a}$, where $a \in \left( 0,1 \right)$ is a non-explicit constant arising from Keevash's Theorem \ref{thm:Keevash}. Therefore, even in the best case, where $a=1$ (in reality, $a$ is far less), we would only be able to prove an upper bound of $n^{- \frac{1}{2 M} + \oone }$ on the threshold.

For the very brave, Conjecture \ref{conj:steiner_thresh} may be further strengthened by considering the random hypergraph process in $K_k^n$. In this model, we look at an evolving hypergraph in $\hkn$ in which the edges are added one by one in a uniformly random order. The hitting time for containing a Steiner system (i.e.\ the number of edges in the first graph in the process containing a Steiner system) is certainly no less than the hitting time for the disappearance of uncovered facets. Perhaps surprisingly, for $k = 2$ (perfect matchings in graphs) in almost all graph processes they are one and the same \cite{bollobas1985random}. On the other hand, for perfect matchings in hypergraphs, for which the threshold is known, it is not known whether the hitting time for isolated vertices is typically the same as that for the appearance of perfect matchings.

\begin{conj}
	In the random hypergraph process in $K_k^n$, asymptotically almost surely (as $n \rightarrow \infty$, $k$ fixed) a Steiner system appears at the very moment that all facets are covered.
\end{conj}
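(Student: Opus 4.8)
The plan is to prove the conjecture in the equivalent form: for $n \in N_k$ (so that a Steiner system exists at all), the $k$-graph $H_{m^*}$ obtained at the hitting time $m^*$ for covering every facet a.a.s.\ \emph{already contains} a Steiner system as a subhypergraph — equivalent, because any Steiner system forces all facets to be covered. At time $m^*$ the density is $p = \Theta(\log n/n)$, i.e.\ $\Theta(n^{k-1}\log n)$ edges, far below the regime $p \ge n^{-\varepsilon}$ of Theorem~\ref{thm:main}; so the approach is not to quote Theorem~\ref{thm:main} but to re-run its skeleton — Template, Nibble, Keevash, absorb — inside this much sparser host. (For $k = 2$ the conjecture is the classical Bollob\'as hitting-time theorem, proved via augmenting paths; the difficulty lies entirely in $k \ge 3$.)

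First I would quarantine the dangerous local structure by constraint propagation. Call a facet \emph{light} if at time $m^*$ it lies in at most a large constant $K$ edges. A facet lying in exactly one edge forces that edge into every Steiner system contained in $H_{m^*}$; so, repeatedly, whenever some facet lies in exactly one undiscarded edge, move that edge into a partial design $F$ and discard every edge sharing a facet with it. First-moment estimates at density $\Theta(\log n/n)$ should show that a.a.s.\ there are only $\mathrm{polylog}(n) = n^{o(1)}$ light facets, that no two forced edges conflict (the key point of the peeling step: the expected number of conflicting pinned pairs is $O(n^{1-k}\,\mathrm{polylog}\,n) \to 0$), that the $k$-graph through the light facets is locally tree-like, and hence that the peeling stabilises after $O(1)$ rounds with $F$ an honest partial design covering all light facets and only $n^{o(1)}$ further facets. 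After deleting $F$ and the facets it covers, every surviving facet lies in $\Theta(\log n)$ edges; one then performs a small divisibility-fixing adjustment (as in Keevash's proof, hidden inside the divisibility clause of Proposition~\ref{pr:leave}) so that the residual $(k-1)$-graph $L_0$ of uncovered facets is $k$-divisible and inherits the pseudo-randomness — affine-boundedness and typicality — of a random $(k-1)$-graph of its density.

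Then I would feed $L_0$ and the residual $k$-graph $H' \coloneqq H_{m^*}\setminus F$ into the machinery of Section~\ref{sec:proof_k4}: build a Template out of edges already present early in the process (say those in $H_{m^*/2}$, still a binomial $k$-graph of density $\Theta(\log n/n)$) as in Section~\ref{ssec:template}; run the Nibble of Section~\ref{ssec:nibble} on the bulk of $H'$; feed the uncovered leave into an appropriate form of Keevash's Theorem~\ref{thm:Keevash}; and finish with Algorithm~\ref{alg:RGA}, absorbing the leftover edges back into $H_{m^*}$ via the Template. If all of these sparse-host analogues survived, one would obtain a Steiner system inside $H_{m^*}$, proving the conjecture.

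The hard part — and the reason this is a conjecture rather than a theorem — is that \emph{every} one of these ingredients, as currently understood, requires the host to have polynomially large density. Proposition~\ref{pr:absorbers} produces $\Omega(n^{k-M\varepsilon})$ absorbers per edge only because $p = n^{-\varepsilon}$; at $p = \Theta(\log n/n)$ the expected number of absorbers of a fixed edge is $\sim n^{k}(\log n/n)^{M} = n^{k-M}(\log n)^{M}$, which tends to $0$ since $M = 2^{2k-1}-2^{k}+1 > k$, so the algebraic-absorber method of this paper collapses completely. The R\"odl nibble and Theorem~\ref{thm:Keevash} likewise need polynomially large density and leave a polynomially dense divisible remainder, and there is no known hypergraph analogue of the augmenting-path argument that settles $k = 2$ by working directly on $H_{m^*}$. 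What one would really have to supply is a \emph{robust, absorbing design-existence theorem for host $(k-1)$-graphs of density $\Theta(\log n/n)$} — this is exactly what is missing, and it is plausibly no easier than Conjecture~\ref{conj:steiner_thresh}. A more realistic intermediate target is the same statement with $\Theta(\log n/n)$ relaxed to $(\log n)^{C}/n$ for a constant $C = C(k)$, or simply Conjecture~\ref{conj:steiner_thresh} itself, either of which already seems to demand new tools.
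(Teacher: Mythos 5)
This statement is labelled a conjecture in the paper, and the paper supplies no proof of it --- only the analogy with Bollob\'as's hitting-time theorem for perfect matchings at $k=2$ and the contrast with the still-open hitting-time question for hypergraph perfect matchings. You correctly recognize that, and what you have written is a candid assessment of why the conjecture is open rather than a proof.

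Your sketch --- peel forced edges at degree-one facets, then re-run the Template/Nibble/Keevash/absorb pipeline of Section~\ref{sec:proof_k4} in the residual host --- is a reasonable first attack, and your diagnosis of where it breaks is correct and is the crux of the matter: at $p=\Theta(\log n/n)$ the expected number of absorbers for a fixed edge is $\Theta(n^{k}p^{M}) = n^{k-M}(\log n)^{M}$, which tends to zero because $M = 2^{2k-1}-2^{k}+1 > k$ for every $k \geq 2$, so Proposition~\ref{pr:absorbers} and hence the entire absorption step collapses. This is consistent with the paper's own remark in Section~\ref{sec:future} that the constraint $\varepsilon < \delta/(2M)$ keeps the method far from Conjecture~\ref{conj:steiner_thresh}. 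There is no gap for me to name because you did not claim a proof; your account accurately identifies the missing ingredient, namely an absorption (or augmenting-path) mechanism that survives at near-critical density $\Theta(\log n/n)$, which is exactly what neither this paper nor the existing literature provides.
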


Another interesting direction is the study of fractional designs. A \termdef{fractional $K_k$-decomposition} of $H \in \hkn$ is a weight function $w : H \rightarrow \left[ 0,1 \right]$ s.t.\ for each $f \in K_{k-1} \left( H \right)$, $\sum_{f \subseteq e \in H} w \left( e \right) = 1$. The threshold for the appearance of fractional decompositions is bounded below by the disappearance of uncovered facets and above by the appearance of Steiner systems, as is the hitting time. More than a decade before the threshold for perfect matchings in random hypergraphs was determined, Krivelevich \cite{krivelevich1996perfect} showed that the hitting time for fractional matchings is almost always the same as the hitting time for the disappearance of isolated vertices. We conjecture:

\begin{conj}
	The threshold for the appearance of fractional $K_k$-decompositions in $\hnp$ is $\Theta \left( \frac{\log n}{n} \right)$.
\end{conj}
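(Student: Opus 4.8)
The lower bound requires nothing beyond the covering threshold and makes no use of the fractional relaxation. A fractional $K_k$-decomposition of $K_{k-1}^n$ supported on $H$ forces every facet to lie in an edge of $H$, i.e.\ $K_{k-1}\left(H\right) = K_{k-1}^n$; a fixed facet lies in $n-k+1$ potential edges and is therefore uncovered with probability $\left(1-p\right)^{n-k+1}$, so the expected number of uncovered facets is $\binom{n}{k-1}\left(1-p\right)^{n-k+1} = \Theta\left(n^{k-1}e^{-pn}\right)$, which is $\omega\left(1\right)$ and (by a routine second-moment estimate) a.a.s.\ positive once $p \le \frac{\left(k-1\right)\log n}{2n}$. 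Hence $p_c = \Omega\left(\frac{\log n}{n}\right)$, and the whole content of the conjecture is the matching upper bound: for a sufficiently large constant $C$, if $p \ge \frac{C\log n}{n}$ then a.a.s.\ the linear program ``$w\colon H\to\left[0,1\right]$ with $\sum_{f\subseteq e\in H} w\left(e\right) = 1$ for every $f\in K_{k-1}^n$'' is feasible.

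The fractional relaxation disposes of two of the three hard ingredients of Section \ref{sec:proof_k4} --- there is no arithmetic obstruction (so no algebraic Template, and no $k$-divisibility to worry about) and there is no absorbing step --- so one is tempted to run the pipeline of that section with a fractional endgame. The catch is that the endgame there is far too wasteful near the covering threshold: an almost-perfect integral nibble $N \subseteq H$ carrying weight $1$ per edge exhausts each facet's budget, so the residual hypergraph $L = K_{k-1}^n \setminus K_{k-1}\left(N\right)$ of density $n^{-a}$ would have to be covered using only $H \cap K_k\left(L\right)$ (no edge with a facet already satisfied by $N$ may carry weight), and this set contains merely $\Theta\left(p\,d\left(L\right)^{k} n^{k}\right) = o\left(\left|L\right|\right)$ edges --- not enough to place unit total weight on each facet of $L$ when every weight is at most $1$. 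So the relaxation must be used more aggressively: at $p = \Theta\left(\frac{\log n}{n}\right)$ the average weight per edge of $H$ is $\Theta\left(1/\log n\right)$, and a workable solution has to spread weight thinly over essentially all of $H$ rather than concentrate it on a near-perfect matching.

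The natural way to produce such a spread-out solution is to attack the feasibility LP directly through duality, as Krivelevich did for fractional perfect matchings in \cite{krivelevich1996perfect} (the case $k=2$ of the conjecture). One conditions on the a.a.s.\ typicality of $H$ at $p = \frac{C\log n}{n}$ --- $K_{k-1}\left(H\right) = K_{k-1}^n$, every facet in $\left(1\pm o\left(1\right)\right)pn$ edges, the natural codegree bounds --- passes to the Farkas dual, namely a $y \in \R^{K_{k-1}^n}$ with $\sum_{f \subseteq e} y_f \ge 0$ for every $e \in H$ and $\sum_f y_f < 0$, normalises and discretises $y$, and argues that the $\Theta\left(\log n\right)$ edges of $H$ through a facet on which $y$ is most negative violate some edge-constraint with probability $1 - n^{-\omega\left(1\right)}$, finishing by a union bound over the finitely many discretised candidates. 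The main obstacle --- and the reason this is only a conjecture --- is that Krivelevich's control of the fractional-matching dual rests on the Tutte--Berge structure of graphs, which has no clean counterpart for the $k$-uniform facet--edge LP; one has to understand the dual polytope of that LP well enough to pin down which $y$ can be near-optimal in a sparse pseudo-random host. I would therefore settle $k = 3$ first --- fractional Steiner triple systems, where the dual variables are weights on $E\left(K_n\right)$ subject to every triple of $H$ having non-negative weight-sum, and the dual analysis has a realistic chance of being pushed through explicitly --- with $k = 2$ serving as a consistency check; even an intermediate bound for the fractional problem, improving on the $n^{-\varepsilon}$ of Theorem \ref{thm:main} but falling short of $\frac{\log n}{n}$, would already be new.
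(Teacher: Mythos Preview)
This statement is a \emph{conjecture}: the paper offers no proof, only the remark that the lower bound comes from uncovered facets and that Krivelevich \cite{krivelevich1996perfect} handled the $k=2$ analogue for fractional matchings. Your write-up is not a proof either, and you say so explicitly (``the reason this is only a conjecture''); what you have produced is a programme rather than an argument. Read as such, it is sensible: the lower bound you give is correct and matches the paper's one-line justification, your observation that the integral nibble-plus-absorber pipeline cannot simply be truncated at the fractional stage near $p = \Theta(\log n / n)$ is a genuine and useful point, and the LP-duality route via Farkas is the natural line of attack. But nothing here closes the gap --- the discretise-and-union-bound sketch over dual vectors $y$ is not an argument until you can actually control the structure of near-optimal duals, and as you yourself note, the Tutte--Berge machinery that makes this work for $k=2$ has no known hypergraph substitute. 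So: there is no discrepancy with the paper, because neither you nor the paper proves anything; your commentary goes somewhat beyond what the paper says about the conjecture, and does so reasonably, but it should be labelled as discussion rather than proof.
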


\begin{conj}\label{conj:fractional_hitting_time}
	In the random hypergraph process in $K_k^n$, asymptotically almost surely (as $n \rightarrow \infty$, $k$ fixed) the hitting time for the disappearance of uncovered facets and the appearance of fractional $K_k$-decompositions are the same.
\end{conj}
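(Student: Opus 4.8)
\medskip
\noindent\textbf{A proposed approach to Conjecture~\ref{conj:fractional_hitting_time}.}
The plan is to recast the statement as a perfect fractional matching problem in an auxiliary hypergraph and then run a hitting-time analysis of the kind Krivelevich used for fractional matchings. Let $\mathcal{M}$ be the $k$-uniform hypergraph with vertex set $\binom{[n]}{k-1}$ whose edges are the sets $\binom{e}{k-1}$ for $e \in K_k^n$; it is linear (two distinct edges meet in at most one vertex), $(n-k+1)$-regular, and vertex-transitive. Sending each edge $e$ of $H \in \hkn$ to $\binom{e}{k-1}$ identifies $H$ with a subhypergraph $\mathcal{M}(H)$ of $\mathcal{M}$, under which --- reading the definition of a fractional $K_k$-decomposition with the defining constraint ranging over all $f \in \binom{[n]}{k-1}$, i.e.\ requiring every facet to be covered, which is the reading that makes the property monotone --- a fractional $K_k$-decomposition of $H$ is precisely a perfect fractional matching of $\mathcal{M}(H)$ (a weight $w \geq 0$ on the edges with $\sum_{v \in E} w(E) = 1$ for every vertex $v$; the bound $w \leq 1$ is then automatic), and a facet is uncovered by $H$ exactly when it is isolated in $\mathcal{M}(H)$. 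Thus the process on $K_k^n$ induces the random subhypergraph process on $\mathcal{M}$, the hitting time for all facets covered equals the hitting time $\tau$ for $\mathcal{M}(H)$ to have no isolated vertex, and the hitting time for a fractional $K_k$-decomposition equals the hitting time for a perfect fractional matching of $\mathcal{M}(H)$. Since ``admitting a perfect fractional matching'' is monotone under adding edges (the vertex set being fixed) and implies ``no isolated vertex'', it suffices to show that at time $\tau$ the hypergraph $\mathcal{M}(H)$ a.a.s.\ admits a perfect fractional matching.

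This is exactly the analogue, for the host $\mathcal{M}$, of Krivelevich's theorem \cite{krivelevich1996perfect} about the random $k$-uniform hypergraph process, and it lies in the same density regime: at $\tau$ one has $p = (1+o(1))\frac{(k-1)\log n}{n}$, so $\mathcal{M}(H)$ has $\Theta(N \log N)$ edges on $N = \binom{n}{k-1}$ vertices, a typical vertex has degree $(1+o(1))\log N$, and only $O(\mathrm{polylog}\, n)$ vertices have bounded degree. I would follow the structure of Krivelevich's proof. First, fix a large constant $D$ and show, by first- and second-moment computations, that w.h.p.\ the vertices of degree at most $D$ are pairwise far apart in $\mathcal{M}(H)$ and that the edges incident to them form, around each such vertex, a bounded ``pendant'' piece of explicitly describable shape --- here linearity of $\mathcal{M}$ helps, since it makes these neighbourhoods genuinely tree-like. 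Second, on each pendant piece: every degree-$1$ vertex forces the weight of its unique edge to $1$, which forces weight $0$ on certain nearby edges, and ruling out the relevant constant-size ``bad'' configurations (such as two degree-$1$ vertices whose edges share a vertex) shows that these forced constraints are mutually consistent and extend to a partial fractional matching saturating every vertex of the piece. Third, the ``core'' obtained by deleting the pendant pieces is dense and pseudo-random and should admit a perfect fractional matching. Fourth, glue the core assignment to the pendant assignments, after a small rescaling so that the two agree on the shared boundary vertices.

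I expect the third step, together with the gluing, to be the main obstacle: one must produce a \emph{robust} perfect fractional matching of the core --- a random-like $k$-uniform hypergraph of average degree $\Theta(\log N)$ but minimum degree only a large constant --- whereas the standard existence theorems for (fractional) decompositions assume hosts of minimum codegree linear in the number of vertices, which is nowhere near the present situation; this is essentially the crux of Krivelevich's argument. The natural tool is LP duality: by Farkas' lemma it is enough to rule out a vector $y$ on the vertices with $\sum_{v \in E} y_v \leq 0$ for every edge $E$ and $\sum_v y_v > 0$, which one would control using the near-regularity and expansion of the core together with a discretization that reduces from the continuum of candidate $y$ to a manageable family. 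Secondary difficulties are checking that replacing the complete $k$-uniform host by the linear regular $\mathcal{M}$ does not spoil Krivelevich's estimates (linearity should, if anything, make things easier) and the routine passage between $\hnp$ at $p \approx p_c$ and the edge-arrival process.
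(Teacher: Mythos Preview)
The statement you are addressing is a \emph{conjecture} in the paper, not a theorem: the paper offers no proof whatsoever, only the remark that numerical experiments for $k=3$ are consistent with it. There is therefore nothing to compare your proposal against.

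As for the proposal itself, your reformulation via the auxiliary linear $k$-uniform hypergraph $\mathcal{M}$ and the identification of fractional $K_k$-decompositions with perfect fractional matchings of $\mathcal{M}(H)$ is correct and is the natural starting point. You are also right that this places the problem in exactly the framework of Krivelevich's fractional-matching hitting-time theorem, and your four-step outline mirrors his argument faithfully. However, you have yourself identified the genuine gap: step three requires producing a perfect fractional matching on a random-like $k$-uniform hypergraph of average degree $\Theta(\log N)$ and minimum degree only a large constant. Krivelevich's proof for the complete host $K_k^N$ exploits its full symmetry in the LP-duality/expansion step, and it is not at all clear that the analogous estimates survive when the complete host is replaced by $\mathcal{M}$, which, while vertex-transitive and linear, has only $\Theta(N^{1+1/(k-1)})$ edges rather than $\Theta(N^k)$. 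In particular, the union bound over discretized dual certificates is far more delicate when the ambient hypergraph is this sparse. Your write-up is an honest sketch of a plausible attack, but until the core step is actually carried out it remains a heuristic; the paper's authors presumably left this as a conjecture for precisely this reason.
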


Determining whether a hypergraph contains a Steiner system is NP-hard \cite{colbourn1983embedding}. On the other hand, determining whether a hypergraph contains a fractional $K_k$-decomposition is a linear program, and hence solvable in polynomial time. We have performed numerical experiments indicating that, at least for $k = 3$, Conjecture \ref{conj:fractional_hitting_time} holds.

\subsection*{Acknowledgments}

The author wishes to thank his adviser, Nati Linial. His enthusiasm is contagious, and is matched by his patience and encouragement. Nati also helped write the introduction. It was Zur Luria's suggestion that Keevash's method should yield a threshold result. For this, and for hours of fun conversation, the author owes him a special thanks. Yuval Peled read the manuscript and made helpful comments. Last but not least, talking the proof through with Shanee Rosen benefited the exposition a great deal.

\appendix

\section{An Analysis of the R\"{o}dl Nibble}\label{app:nibble}

\newcommand{\bitesize}{{\left( 1 - \nu e^{-\nu k} \right)^k}}

In this section we prove Proposition \ref{pr:leave} by an application of the R\"odl Nibble. We assume $H \in \hkn$, $\pi : \left[ n \right] \hookrightarrow \F^*$ and $T \subseteq H$ have been chosen as described in Section \ref{ssec:template}. Our analysis is quite crude, and we make no attempt to recover the optimal constants.

The Nibble constructs a sequence of hypergraphs $H \supseteq G_0 \supseteq G_1 \supseteq \ldots \supseteq G_\tau$ as well as a sequence ${N_0 \subseteq N_1 \subseteq \ldots \subseteq N_\tau \subseteq G_0}$ of sets of facet-disjoint edges where for each $0 \leq t \leq \tau$, $T$, $G_t$, and $N_t$ are facet disjoint and $K_{k-1}^n = K_{k-1} \left( T \cup N_t \cup G_t \right)$. We call $L_t \coloneqq K_{k-1} \left( G_t \right)$ the set of uncovered facets (at stage $t$). The construction is as follows: Fix some sufficiently small $\nu = \nu \left( k \right) > 0$. Let $G_0 \subseteq H$ be the subhypergraph consisting of all edges that are facet disjoint from $T$. Let $N_0 = \emptyset$. Set $D_t = \frac{k \left| G_t \right|}{\left| L_t \right|}$. At each step take a random subset $S_t \subseteq G_t$, where each edge is retained with probability $\frac{\nu}{D_t}$, independently. Let $S_t' \subseteq S_t$ be the set of edges that are facet disjoint from all other edges in $S_t$. We set $N_{t+1} = N_t \cup S_t'$ and let $G_{t+1} \subseteq G_t$ be the set of edges that are facet disjoint from $N_{t+1}$.

\begin{prop}\label{pr:nibble_steps}
	Let $h \in \N$. There exists some $C = C \left( k \right) > 1$ and some $\mathcal{T} > 0$ s.t.\ w.v.h.p.\ (over the choice of $H$, $\pi$, and the internal randomness of the Nibble), for each $0 \leq t \leq \tau \coloneqq \lfloor \mathcal{T} \log n \rfloor$, the following hold, for $c_t = C^{t+1} p^{\frac{1}{3}}$:

	\begin{enumerate}
		\item $D_t = \left( 1 \pm c_t \right) \left( 1 - \nu e^{- k \nu} \right)^{\left( k - 1 \right) t} np$.
		
		\item $\left| L_t \right| = \left( 1 \pm c_t \right) \left( 1 - \nu e^{-k \nu } \right)^t \binom{n}{k - 1}$.
		
		\item $L_t$ is $\left( c_t , h \right)$-typical.
		
		\item $L_t$ is $C$-affine-bounded.
		
		\item $L_t$ is $k$-divisible.
	\end{enumerate}
\end{prop}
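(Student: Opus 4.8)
The plan is to prove Proposition \ref{pr:nibble_steps} by induction on $t$, tracking all five properties simultaneously through a single nibble step. The base case $t=0$ is essentially an exercise in concentration of measure for $G(n;p)$: since $T$ is a partial design with $|K_{k-1}(T)| = O(np)\cdot O(1)$ facets covered (the algebraic facets), removing edges meeting $T$ changes $|G_0|$, all codegrees, and all affine intersections by a negligible amount, so $L_0 = K_{k-1}^n$ is trivially $k$-divisible, $1$-affine-bounded (up to a constant), $(o(1),h)$-typical, and $D_0 = (1\pm o(1))np$. For the affine-boundedness of $G_0 \subseteq H$ one applies Lemma \ref{lem:boolean_concentration} or \ref{lem:azuma_concentration} to $|\pi(H) \cap A|$ for each affine subspace $A$, noting there are only $n^{O(1)}$ such subspaces, so a union bound over them works w.v.h.p.\ provided $p \geq n^{-\varepsilon}$ is not too small.

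For the inductive step, assume the five properties hold at stage $t$ and analyze the transition to stage $t+1$. The key quantities are: (i) the probability that a fixed edge $e \in G_t$ survives into $G_{t+1}$ — it survives iff $e \notin S_t$ or ($e \in S_t$ and $e$ is facet-disjoint from all other edges of $S_t$ and no later-covered facet of $e$ is hit) — and by typicality each facet of $e$ lies in $(1\pm c_t)D_t$ edges of $G_t$, so the probability $e$'s facets are ``cleanly'' covered works out to $\approx (1 - \nu e^{-k\nu})$ per facet after accounting for the Poisson-type thinning, giving survival probability $\approx (1-\nu e^{-k\nu})^{k-1}$ for edges and $(1-\nu e^{-k\nu})$ for facets; (ii) concentration of the actual counts around these expectations. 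For (ii) I would use Lemma \ref{lem:azuma_concentration} (bounded-differences over the independent coin flips defining $S_t$) for $|L_{t+1}|$, $|G_{t+1}|$, and each of the polynomially-many codegrees and affine intersections; a single flip changes each such count by $O(1)$ (one edge in or out of $S_t$ affects $O(1)$ facets directly, and indirectly $O(D_t) = O(np)$ facet-survivals — so $b_i = O(np)$, $\sum b_i^2 = O(|G_t| \cdot (np)^2) = O(n^{2k-1}p^3)$), yielding deviation $n^{-\omega(1)}$-unlikely to exceed, say, $c_t$ times the mean, which is how the $c_{t+1} = C\cdot c_t$ blow-up arises. The $k$-divisibility of $L_{t+1}$ I would handle separately: it is \emph{not} automatic, so one must either argue that $S_t'$ can be pruned / augmented by an $O(1)$ number of edges to restore divisibility of every link $L_{t+1}(S)$ (there are $n^{O(1)}$ constraints, each a mod-$O(1)$ condition, and each fixable locally), or — cleaner — only demand $k$-divisibility at the final stage $\tau$ and repair it there using the switching structure of $T$ or a small brute-force correction, absorbing the error into the Template step.

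The main obstacle I anticipate is the interplay between the concentration argument and the choice of $\mathcal{T}$: the multiplicative error $c_t = C^{t+1}p^{1/3}$ grows geometrically, so after $\tau = \lfloor \mathcal{T}\log n\rfloor$ steps we have $c_\tau = C^{\mathcal{T}\log n + 1}p^{1/3} = n^{\mathcal{T}\log C} p^{1/3} = n^{\mathcal{T}\log C - \varepsilon/3}$, which stays $o(1)$ only if $\mathcal{T} < \frac{\varepsilon}{3\log C}$; simultaneously we need $\tau$ large enough that $|L_\tau| \approx (1-\nu e^{-k\nu})^\tau \binom{n}{k-1}$ has dropped to density roughly $n^{-a}$, i.e.\ $\tau \approx \frac{a\log n}{-\log(1-\nu e^{-k\nu})}$. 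Reconciling these forces a relation among $\varepsilon$, $\nu$, $a$, and $C$ — essentially the source of the smallness requirement on $\varepsilon$ — and one has to check the window is non-empty, which works because $C$ depends only on $k$ (through $|\FO|$, $|\EO|$, and the Lipschitz constants) while $\varepsilon$ may be taken as small as we like relative to it; but the bookkeeping is delicate. A secondary technical point is verifying that the codegree-survival events used in typicality are sufficiently independent across the $\ell \leq h$ sets $S_1,\dots,S_\ell$ — this follows from typicality at stage $t$ together with the observation that the relevant edge-sets overlap in only $O(n^{k-2})$ edges, a lower-order correction — and that affine-boundedness of $L_{t+1}$ survives, which it does because $L_{t+1} \subseteq L_t$ and $|L_{t+1} \cap A| \leq |L_t \cap A|$ with the density ratio $|L_{t+1}|/|L_t|$ tracked by the first two items.
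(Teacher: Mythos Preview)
Your overall architecture---induct on $t$, handling the base case via concentration for $G_0$ and the step via a single-nibble analysis with bounded-differences concentration---matches the paper's. Two points deserve correction.

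First, $k$-divisibility \emph{is} automatic, not something requiring repair. Observe that $L_t = K_{k-1}^n \setminus K_{k-1}(T \cup N_t)$, where $T \cup N_t$ is a partial design. For any single $k$-set $e$ and any $S \in \binom{[n]}{i}$, the link $\binom{e}{k-1}(S)$ has size $k-i$ if $S \subseteq e$ and $0$ otherwise; in either case $k-i$ divides it. Summing over the edges of a facet-disjoint collection, $K_{k-1}(T \cup N_t)$ is $k$-divisible. Since $n \in N_k$ means $K_{k-1}^n$ is $k$-divisible, and the difference of $k$-divisible hypergraphs is $k$-divisible, $L_t$ is too. Your proposed local repair of $S_t'$ is both unnecessary and dubious as stated: there are $n^{O(1)}$ divisibility constraints, and fixing one by adding or removing an edge perturbs $\Theta(1)$ others, so an $O(1)$-edge correction does not obviously close.

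Second, your affine-boundedness argument is insufficient. From $L_{t+1} \subseteq L_t$ you get $|A \cap \pi(L_{t+1})| \leq |A \cap \pi(L_t)| \leq C_t |A| \frac{|L_t|}{|\F|^{k-1}}$, but the target inequality has $|L_{t+1}|$ in the numerator, so the constant degrades by a factor $|L_t|/|L_{t+1}| \approx (1 - \nu e^{-k\nu})^{-1}$ at each step; after $\Theta(\log n)$ steps this is $n^{\Theta(1)}$, not $O(1)$. The paper instead shows, via the same bounded-differences concentration you invoke elsewhere, that $|A \cap \pi(L_{t+1})|$ concentrates around $(1 - \nu e^{-k\nu})\,|A \cap \pi(L_t)|$ for each one-dimensional affine $A$ (and hence all $A$). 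Thus the affine-boundedness constant only picks up a factor $1 + O(c_t)$ per step, and $\prod_t (1 + O(c_t)) = O(1)$ since $\sum_t c_t$ is a geometric series dominated by its last term $c_\tau = o(1)$.

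(Minor: $L_0 = K_{k-1}^n \setminus K_{k-1}(T)$, not $K_{k-1}^n$; and $|K_{k-1}(T)| = O(n^{k-1}p)$, not $O(np)$. Neither affects the argument materially.)
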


Proposition \ref{pr:leave} is an immediate consequence of Proposition \ref{pr:nibble_steps}, as one may take $N = N_t$ for some $t = \Omega \left( \log n \right)$. Intuitively, for every $t \leq \tau$, $L_{t+1}$ looks like a random subset of $L_t$ with density approximately $\left( 1 - \nu e^{-k \nu } \right)$. This ensures that the pseudo-randomness conditions hold, and that after $\log n$ steps the density of $L_t$ is polynomially small.

Proposition \ref{pr:nibble_steps} follows by inducing on $t$ and the next two lemmas, together with the observation that removing one $k$-divisible graph from another leaves a $k$-divisible graph. Therefore $L_t = K_{k-1}^n \setminus K_{k-1} \left( T \cup N_t \right)$ is $k$-divisible.

\begin{lem}\label{lem:induction_base}
	Let $h \in \N$. There exists some $C > 1$ s.t.\ w.v.h.p.\ (over the choice of $H$ and $\pi$) $G_0$ satisfies:
	\begin{enumerate}
		\item\label{itm:g_0_size} $\left| G_0 \right| = \left( 1 \pm C p \right) \binom{n}{k} p$.
		
		\item $\forall f \in K_{k-1} \left( G_0 \right)$, $\left| G_0 \left( f \right) \right| = \left( 1 \pm C p \right) D_0$.
		
		\item\label{itm:g_0_k-1_size} $\left| L_0 \right| = \left( 1 \pm C p \right) \binom{n}{k-1}$.
				
		\item $D_0 = \left( 1 \pm C p \right)np$.
		
		\item $L_0$ is $\left( C p , h \right)$-typical.
		
		\item $L_0$ is $C$-affine-bounded.
	\end{enumerate}
\end{lem}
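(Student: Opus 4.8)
The plan is to exploit two elementary facts about algebraic edges. First, every $\left(k-1\right)$-set extends to at most one algebraic $k$-set, since the missing vertex $x$ is forced by $\sum \pi x_i = 0$; hence there are at most $\binom{n}{k-1} = O\left(n^{k-1}\right)$ algebraic $k$-sets in all, at most $\frac{n}{2}+O\left(1\right)$ of them contain any fixed $\left(k-2\right)$-set (for such a set the two missing vertices $z,w$ satisfy $\pi z + \pi w = c$ for a fixed $c$, so $\pi w$ is determined by $\pi z$, and $\left|\pi\left(\left[n\right]\right)\right| = n$ while $\left|\F\right| \geq 2n$), and two algebraic $k$-sets sharing a facet must have the same completing vertex, so the algebraic edges are automatically facet-disjoint. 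Second, $T$ consists only of the algebraic $k$-sets that happen to lie in $H$, so each count involving $T$ gains a factor $p$ over the naive estimate. These facts are deterministic in $\pi$, and all that remains is concentration over the choice of $H$ (Lemmas \ref{lem:boolean_concentration} and \ref{lem:azuma_concentration}); indeed the lemma will hold w.v.h.p.\ over $H$ for every injection $\pi$.

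First I would record, by Chernoff bounds (Lemma \ref{lem:boolean_concentration}) and a union bound over the $O\left(n^{k-2}\right)$ choices of $S$, that w.v.h.p.\ $\left|T\right| \leq 2\binom{n}{k-1}p$ and, for every $\left(k-2\right)$-set $S$, the number of algebraic edges of $H$ containing $S$ is $O\left(np\right)$: both are binomial variables whose number of trials is bounded deterministically by the facts above. Since $T$ is facet-disjoint, $\left|K_{k-1}\left(T\right)\right| = k\left|T\right| = O\left(n^{k-1}p\right)$, and since $L_0 = K_{k-1}^n \setminus K_{k-1}\left(T\right)$ this is item \ref{itm:g_0_k-1_size}; together with item \ref{itm:g_0_size} and the identity $k\binom{n}{k} = \left(n-k+1\right)\binom{n}{k-1}$ it gives $D_0 = k\left|G_0\right|/\left|L_0\right| = np\left(1 \pm Cp\right)$, the $O\left(1/n\right)$ error being absorbed as $1/n = o\left(p\right)$.

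The delicate parts are item \ref{itm:g_0_size} and the estimate $\left|G_0\left(f\right)\right| = \left(1 \pm Cp\right)D_0$: the trivial bound on the number of edges of $H$ meeting $T$ in a facet is only $O\left(n^k p\right)$, while $\left(1 \pm Cp\right)\binom{n}{k}p$ demands an error of order $n^k p^2$. The resolution is that such an edge $e$ is \emph{doubly} constrained --- both $e$ and some algebraic edge $t$ with $\left|e \cap t\right| = k-1$ lie in $H$. I would thus bound $\left|H\right| - \left|G_0\right|$ by $Z := \#\left\{\left(e,t\right): e,t \in H,\ t\ \mathrm{algebraic},\ \left|e \cap t\right| = k-1\right\}$, which has $\E Z = O\left(n^k p^2\right)$ and is a degree-$2$ polynomial in the edge indicators with coordinatewise Lipschitz constants $O\left(n\right)$ on algebraic edges and $O\left(1\right)$ elsewhere; so the sum of squares of these constants is $O\left(n^{k+1}\right)$ and Lemma \ref{lem:azuma_concentration} gives $Z = O\left(n^k p^2\right)$ w.v.h.p. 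Combined with $\left|H\right| = \left(1 \pm o\left(p\right)\right)\binom{n}{k}p$ this yields item \ref{itm:g_0_size}. For $\left|G_0\left(f\right)\right|$, fix a facet $f$ uncovered by $T$ and write $\left|G_0\left(f\right)\right| = \left|H\left(f\right)\right| - Z_f$, where $Z_f$ counts the vertices $z$ with $f \cup \left\{z\right\} \in H$ but with some facet $\left(f \setminus \left\{y\right\}\right) \cup \left\{z\right\}$ covered by $T$; conditioning on the star of $f$ in $H$ (of size $\left|H\left(f\right)\right| = \left(1 \pm o\left(p\right)\right)np$ w.v.h.p.), such a $z$ contributes to $Z_f$ only if a second edge of $H$ --- one not in the star --- is present, so conditionally $Z_f$ is dominated by a constant multiple of a $\mathrm{Bin}\left(O\left(\left|H\left(f\right)\right|\right), p\right)$ variable and hence is $O\left(np^2\right)$ w.v.h.p. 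A union bound over facets gives $\left|G_0\left(f\right)\right| = np\left(1 \pm Cp\right) = \left(1 \pm C'p\right)D_0$; this also shows every uncovered facet lies in $K_{k-1}\left(G_0\right)$, so $L_0 = K_{k-1}\left(G_0\right) = K_{k-1}^n \setminus K_{k-1}\left(T\right)$, as used above.

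The last two parts are easy. Affine-boundedness is essentially deterministic: an affine $A \subseteq \F^{k-1}$ of dimension $d \geq 1$ projects bijectively onto $\F^d$ on some $d$ of its coordinates, and every point of $\pi\left(K_{k-1}^n\right)$ has all coordinates in $\pi\left(\left[n\right]\right)$, so $\left|A \cap \pi\left(L_0\right)\right| \leq \left|A \cap \pi\left(K_{k-1}^n\right)\right| \leq \left|\pi\left(\left[n\right]\right)\right|^d = n^d = O\left(\left|A\right| \left|L_0\right|/\left|\F\right|^{k-1}\right)$, the last equality because $\left|\F\right| = \Theta\left(n\right)$ and $\binom{n}{k-1}/\left|\F\right|^{k-1} = \Theta\left(1\right)$. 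For $\left(Cp,h\right)$-typicality, for $\left(k-2\right)$-sets $S_1,\dots,S_\ell$ with $\ell \leq h$ one has $\bigcap_i L_0\left(S_i\right) = \left(\left[n\right] \setminus \bigcup_i S_i\right) \setminus \bigcup_i \left\{z: S_i \cup \left\{z\right\} \in K_{k-1}\left(T\right)\right\}$, and the latter union has size $\leq \sum_i 2 \cdot \#\left\{\mathrm{algebraic}\ t \in H: S_i \subseteq t\right\} = O\left(np\right)$ by the bound recorded above, so $\left|\bigcap_i L_0\left(S_i\right)\right| = n - O\left(np\right) = \left(1 \pm Cp\right)d\left(L_0\right)^\ell n$ since $d\left(L_0\right) = 1 - O\left(p\right)$. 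The one genuine obstacle is the point isolated in the previous paragraph --- extracting the extra factor $p$ for $\left|G_0\right|$ and $\left|G_0\left(f\right)\right|$ via the double constraint on edges near the template --- after which everything reduces to Chernoff/Azuma estimates that beat the relevant union bounds once $\varepsilon$ is small enough.
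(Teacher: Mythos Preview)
Your proof is correct, but it follows a genuinely different route from the paper's. The paper conditions on $H$ having various Chernoff-type properties and then uses the randomness of $\pi$ --- specifically Lemma~\ref{lem:random_injection} on random injections --- to control $\left|T\right|$, $\left|G_0\left(f\right)\right|$, and the typicality of $L_0$. You instead fix $\pi$ arbitrarily and use only the randomness of $H$, exploiting the deterministic facts that every facet extends to at most one algebraic $k$-set, that at most $O\left(n\right)$ algebraic $k$-sets contain any fixed $\left(k-2\right)$-set, and that algebraic $k$-sets are automatically facet-disjoint. This lets you replace the random-injection lemma entirely by Chernoff/Azuma bounds on the independent edge indicators, which is arguably more elementary and yields the stronger conclusion that the lemma holds w.v.h.p.\ over $H$ for \emph{every} injection $\pi$, not just a random one. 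The paper's approach, on the other hand, aligns more directly with the surrounding arguments in Section~\ref{ssec:template}, which also exploit the randomness of $\pi$.

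Two small points of sloppiness, neither fatal. First, in item~\ref{itm:g_0_size} your $Z$ does not count the edges of $T$ themselves (since $\left|e \cap e\right| = k \neq k-1$), so strictly $\left|H\right| - \left|G_0\right| \leq Z + \left|T\right|$; but $\left|T\right| = O\left(n^{k-1}p\right) = o\left(n^k p^2\right)$ since $np \to \infty$, so this is harmless. Second, the Chernoff bound on $\left|H\left(f\right)\right|$ (and on $\left|H\right|$) gives $\left(1 \pm O\left(p\right)\right)np$ rather than $\left(1 \pm o\left(p\right)\right)np$ once you track the exponent needed to beat the union bound; this does not affect the conclusion, which only requires $O\left(p\right)$.
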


\begin{lem}\label{lem:induction_step}
	Let $G \in \hkn$, $D = \frac{k \left| G \right|}{\left| K_{k-1} \left( G \right) \right|}$, and let
	$$
	1 > \alpha > \max \left\{ \frac{1}{\sqrt{D}} , \frac{1}{\left| K_{k-1} \left( G \right) \right|} , d \left( K_{k-1} \left( G \right) \right)^{-h} \sqrt{\frac{D}{n}} \right\} \log n , \beta > 0
	$$
	Assume $G$ satisfies:
	\begin{enumerate}
		\item\label{itm:regularity} $\forall f \in K_{k-1} \left( G \right)$, $\left| G \left( f \right) \right| = \left( 1 \pm \alpha \right) D$.
		
		\item $K_{k-1} \left( G \right)$ is $\left( \alpha , h\right)$-typical.
		
		\item $K_{k-1} \left( G \right)$ is $\beta$-affine-bounded.
		
		\item $D \geq 1$.
		
		\item $\left| G \right| \geq n^\frac{5}{2}$.
		
		\item $d \left( K_{k-1} \left( G \right) \right) \geq \frac{\log n}{n}$.
	\end{enumerate}
	Assume $\nu < \nu_0 \left( k \right)$ is sufficiently small and let $G'$ be the result of applying a single stage of the Nibble to $G$, where in the initial stage $S$ is a random subset of $G$ in which each edge is included with probability $\frac{\nu }{D}$, independently, and $S' \subseteq S$ is the set of edges facet-disjoint from all others. There exists some $C = C \left( k \right) > 1$ s.t.\ w.v.h.p. $G'$ satisfies:
	\begin{enumerate}[label={(\arabic*)}]
		\item\label{itm:conc_degree} $D' \coloneqq \frac{ k \left| G' \right| }{\left| K_{k-1} \left( G' \right) \right| } = \left( 1 \pm C \alpha \right) \left( 1 - \nu e^{ - k \nu } \right)^{k-1} D$.
		
		\item\label{itm:conclusion_reg} $\forall f \in K_{k-1} \left( G' \right)$, $\left| G' \left( f \right) \right| = \left( 1 \pm C \alpha \right) D'$.
		
		\item\label{itm:leave_cover} $K_{k-1} \left( G' \right) = K_{k-1} \left( G \right) \setminus K_{k-1} \left( S' \right)$.
		
		\item\label{itm:conclusion_size} $\left| G' \right| = \left( 1 \pm C \alpha \right) \bitesize \left| G \right|$
		
		\item\label{itm:conclusion_typical} $K_{k-1} \left( G' \right)$ is $\left( C \alpha, h \right)$-typical.
		
		\item\label{itm:conclusion_affine_bounded} $K_{k-1} \left( G' \right)$ is $\left( 1 + C \alpha \right) \beta$-affine-bounded.
	\end{enumerate}
\end{lem}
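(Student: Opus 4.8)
The plan is a single-stage R\"odl-nibble analysis organised around one quantity: for a facet $f\in K_{k-1}(G)$ let $Y_f$ be the indicator that some edge of $S'$ contains $f$ (``$f$ is covered''). Everything reduces to $\Prob[Y_f=1]$ and to how the $Y_f$ of the $k$ facets of a common edge interact. First I would compute the coverage probability: any two edges through $f$ conflict on $f$, so at most one of them can land in $S'$, whence $\Prob[Y_f=1]=\sum_{e\in G(f)}\Prob[e\in S']$. For a fixed $e\ni f$ the edges conflicting with $e$ number $(1\pm\alpha)kD-O(1)$ (no double counting, since an edge sharing two facets with $e$ equals $e$), they are distinct from $e$, and all selections are independent, so $\Prob[e\in S']=\tfrac{\nu}{D}\big(1-\tfrac{\nu}{D}\big)^{(1\pm\alpha)kD-O(1)}=\tfrac{\nu}{D}e^{-k\nu}(1\pm O(\alpha))$, the $O(\nu^2/D)$ errors being absorbed because $\alpha>D^{-1/2}\log n$. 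Summing over the $(1\pm\alpha)D$ edges through $f$ gives $\Prob[Y_f=1]=\nu e^{-k\nu}(1\pm O(\alpha))$, hence $\Prob[f\in K_{k-1}(G')]=(1\pm O(\alpha))(1-\nu e^{-k\nu})$. For an edge $e$, $e\in G'$ iff none of its $k$ facets is covered, and the coverage events of those facets interact only through whether $e$ itself enters $S'$, an event of probability $O(1/D)=o(\alpha)$; so inclusion--exclusion yields $\Prob[e\in G']=\bitesize\,(1\pm O(\alpha))$, and likewise, conditioned on $f$ surviving, each of its edges keeps its remaining $k-1$ facets uncovered with probability $(1-\nu e^{-k\nu})^{k-1}(1\pm O(\alpha))$, so $\E[\,|G'(f)|\mid f\text{ survives}\,]=(1\pm O(\alpha))(1-\nu e^{-k\nu})^{k-1}D=:(1\pm O(\alpha))D'$. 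The same near-independence (common coverage of two distinct facets forces them to share $k-2$ vertices, after which the correlation is $O(1/D)$) controls the codegree counts for $\le h$ distinct $(k-2)$-sets.

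Summing these expectations over facets and edges gives the asserted expected values of $D'$, $|G'|$, each $|G'(f)|$ and the codegree counts---the ``mean'' half of \ref{itm:conc_degree}--\ref{itm:conclusion_typical}. Item \ref{itm:leave_cover} is immediate in the inclusion $K_{k-1}(G')\subseteq K_{k-1}(G)\setminus K_{k-1}(S')$; conversely an uncovered facet fails to lie in $K_{k-1}(G')$ only if all $(1\pm\alpha)D$ of its edges are hit by $S'$, which has probability at most $\big(1-(1-\nu e^{-k\nu})^{k-1}+o(1)\big)^{(1-\alpha)D}=e^{-\Omega(D)}=n^{-\omegaone}$ since $D\ge\log^2 n$, so a union bound over the $\le n^{k-1}$ facets gives the reverse inclusion w.v.h.p. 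It remains to prove concentration. Each quantity is a function of the independent indicators $\{\mathbf 1[e\in S]\}_{e\in G}$, but a single global Lipschitz constant is too weak---flipping an edge through $f$ can toggle $f$'s own coverage and so change $|G'(f)|$ by $\Theta(D)$---so, as in the standard analyses of the nibble (cf.\ the references cited after Proposition \ref{pr:leave}), one first conditions on the $O(1)$-probability events fixing the ``self'' coordinates (for $|G'(f)|$, on $f$ surviving) and then applies Lemma \ref{lem:boolean_concentration}, whose tail improves with the tiny Bernoulli mean $q=\nu/D$ and, after conditioning, sees only bounded per-coordinate influences. The three lower bounds imposed on $\alpha$---over $D^{-1/2}$, over $|K_{k-1}(G)|^{-1}$, and over $d(K_{k-1}(G))^{-h}\sqrt{D/n}$, each times $\log n$---are calibrated precisely so that the resulting tails are $n^{-\omegaone}$ at relative scale $O(\alpha)$ after union bounds over the $\le n^{k-1}$ facets and over the polynomially many families of $\le h$ distinct $(k-2)$-sets. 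Folding every $O(\alpha)$ into one constant $C=C(k)$ then yields \ref{itm:conc_degree}--\ref{itm:conclusion_typical}.

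The main obstacle is the affine-boundedness statement \ref{itm:conclusion_affine_bounded}, which has no analogue in the classical nibble. Monotonicity alone is useless: $K_{k-1}(G')\subseteq K_{k-1}(G)$ yields only $\beta/(1-\nu e^{-k\nu})$-affine-boundedness, and the factor $1/(1-\nu e^{-k\nu})>1$ would compound to $n^{\Omega(1)}$ over the $\Omega(\log n)$ stages of Proposition \ref{pr:nibble_steps}, whereas the downstream uses (Lemma \ref{lem:bounded_inverse}, Proposition \ref{pr:RGA}) need a stage-independent constant. So for \emph{every} affine $A\subseteq\F^{k-1}$ with $\dim A\ge1$ one must show that the slice $|A\cap\pi(K_{k-1}(G'))|=\sum_{f\in A\cap\pi(K_{k-1}(G))}(1-Y_f)$ genuinely shrinks by the right proportion. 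Its mean is $(1\pm O(\alpha))(1-\nu e^{-k\nu})\,|A\cap\pi(K_{k-1}(G))|\le(1+O(\alpha))\beta\,|A|\,|K_{k-1}(G')|/|\F|^{k-1}$, exactly the target, and the correlations are benign: two distinct facets $f,f'\in A$ can have a common covering edge only if $f\cup f'$ spans that edge, which forces $|f\cap f'|=k-2$, and even then $\mathrm{Cov}(Y_f,Y_{f'})=O\!\big((\nu e^{-k\nu})^2/D\big)$ is dominated by the product of the means as $D\to\infty$, so the slice has variance $o(\mathrm{mean}^2)$. The genuine difficulty, and where I expect the real work to lie, is to upgrade this from a Chebyshev-type estimate to an $n^{-\omegaone}$ tail bound \emph{uniformly} over all such $A$: a union bound over them is affordable, since $\F^{k-1}$ has only polynomially many affine subspaces for fixed $k$, but only if each tail is superpolynomially small, which the crude martingale estimate of the previous paragraph does not give for low-dimensional $A$ (whose expected slice, though still $n^{\Omega(1)}$ because the nibble is stopped while $d(K_{k-1}(G))$ is only polynomially small, is polynomially smaller than for the full hypergraph). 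I would handle this by a higher-moment computation exploiting the sparse correlation structure above, or by a martingale argument revealing the edges ``near'' $A$ in a controlled order. Two routine points close the proof: $k$-divisibility is inherited whenever one $k$-divisible hypergraph is removed from another (so it does not appear in this lemma), and since in Proposition \ref{pr:nibble_steps} the per-stage losses satisfy $\sum_t C\alpha_t=o(1)$, the affine-boundedness constant remains bounded over all $\Theta(\log n)$ stages despite being multiplied by $1+C\alpha$ at each.
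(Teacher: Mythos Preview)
Your treatment of items \ref{itm:conc_degree}--\ref{itm:conclusion_typical} is essentially the paper's: compute $\Prob[e\in S']$, use near-independence of the $k$ facet-coverage events of a single edge, get the expectations, and then apply Lemma~\ref{lem:boolean_concentration} after restricting to the relevant Bernoulli variables and conditioning on the ``self'' event (for $|G'(f)|$, on $f\notin K_{k-1}(S')$). The paper derives item \ref{itm:leave_cover} directly from the degree estimate for $|G'(f)|$ rather than from your separate tail bound, but both routes are fine.

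Where you diverge from the paper is item \ref{itm:conclusion_affine_bounded}, and here you are overcomplicating. You correctly observe that monotonicity alone loses a factor $(1-\nu e^{-k\nu})^{-1}$ per stage, and that the issue is a uniform superpolynomial tail over all affine $A$. But the paper's solution is a one-line reduction you are missing: every affine subspace of $\F^{k-1}$ of dimension $d\ge 1$ partitions into $|\F|^{d-1}$ affine \emph{lines}, and the affine-boundedness inequality is additive over this partition. So it suffices to prove, for each line $A$, that
\[
\left|A\cap\pi\!\left(K_{k-1}(G')\right)\right|
\le (1+O(\alpha))\,(1-\nu e^{-k\nu})\,\left|A\cap\pi\!\left(K_{k-1}(G)\right)\right|.
\]
For a line, $|A|=|\F|=\Theta(n)$, and the count $|A\cap\pi(K_{k-1}(G'))|$ depends only on the $O\!\left(|A\cap\pi(K_{k-1}(G))|\cdot D^2\right)=O(nD^2)$ Bernoulli variables indexing edges containing a facet in $A$ and edges sharing a facet with those; each such variable has $O(1)$ influence on the count. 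One then applies Lemma~\ref{lem:boolean_concentration} exactly as for the degree and typicality items---no higher-moment computation and no bespoke martingale order are needed. There are only polynomially many lines in $\F^{k-1}$, so the union bound closes. Your worry that ``the crude martingale estimate does not give for low-dimensional $A$'' dissolves once you notice that the number of relevant variables scales down with the slice just as the target deviation does.
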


\begin{proof}[Proof of Lemma \ref{lem:induction_base}]
	The proof of all the items will proceed by conditioning on some property of $H$ that holds w.v.h.p., then calculating the expected value of some random variable over the choice of $\pi$, and applying Lemma \ref{lem:random_injection} to conclude that w.v.h.p.\ the random variable is close to its expectation.
	
	We'll show that the items hold with $C p$ replaced by $O \left( p \right)$. Once all the items have been proved we may take $C$ to be the maximum of the implicit constants.
	
	\begin{enumerate}
		\item By Chernoff bounds w.v.h.p.\ $\left| H \right| = \left( 1 \pm p \right) \binom{n}{k} p$, and furthermore w.v.h.p.\ every edge in $H$ intersects $O \left( np\right)$ other edges on a facet. Condition on these events occurring.
		
		The probability (over the choice of $\pi$) that $e \in H$ is also in $T$ is $\Theta \left( n \right)$. Thus $\E \left| T \right| = \Theta \left( \frac{\left| H \right|}{n} \right) = \Theta \left( n^{k-1} p \right)$. Since every element of $\F^*$ is contained in $O \left( n^{k-2} \right)$ edges of $T$, composing a transposition of $\F^*$ with $\pi$ affects $\left| T \right|$ by $O \left( n^{k-2} \right)$. Applying Lemma \ref{lem:random_injection}, w.v.h.p.\ $\left| T \right| = O \left( n^{k-1} p \right)$. $G_0$ consists of all edges of $H$ that are facet disjoint from $T$. Therefore:
		$$
		\left| G_0 \right| = \left| H \right| - \left| T \right| O \left( np \right) = \left( 1 \pm O \left( p \right) \right) \binom{n}{k} p
		$$
		
		\item
		\newcommand{\hf}{{H \left( f \right)}}
		
		Let $f = x_1 x_2 \ldots x_{k-1} \in K_{k-1}^n$. Set:
		\begin{gather*}
		\begin{align*}
		&S = \left\{ e \in H : f \subseteq e \right\}\\
		&S' = \left\{ e \in H : \left| e \cap f \right| = k-2 \land \left| e \cap S \right| = 2 \right\}
		\end{align*}
		\end{gather*}
		By Chernoff bounds, w.v.h.p.\ $\left| S \right| = \left| \hf \right| = \left( 1 \pm p \right) np$ and $\left| S' \right| = O \left( n^2 p^3 \right)$. Condition on these events and on $f \notin K_{k-1} \left( T \right)$.
		
		We want to show that w.v.h.p.\ there are $O \left(n p^2 \right)$ elements of $S$ that aren't facet-disjoint from $T$. Note that the event $f \notin T$ is determined by the values $\pi$ takes on $\hf$ (this is an abuse of notation. Formally, $\hf$ is the collection of singletons $\left\{ x \right\}$ s.t.\ $f \cup \left\{ x \right\} \in H$. Here we mean the values $\pi$ takes on $x$ s.t.\ $\left\{ x \right\} \in \hf$). We view $\pi$ as being chosen in three steps. First choose the values $\pi$ takes on $f$, then choose the values $\pi$ takes on $\hf$, while conditioning on $f \notin T$ (by choosing a random injection from $\hf$ to $\F^* \setminus \left( \pi \left( f \right) \cup \left\{ \pi x_1 + \pi x_2 + \ldots \pi x_{k-1} \right\} \right)$). Finally, choose the values $\pi$ takes on $\left[ n \right] \setminus \left( f \cup \hf \right)$ (by choosing a random injection from $\left[ n \right] \setminus \left( f \cup \hf \right)$ to $\F^* \setminus \pi \left( f \cup \hf \right)$).
		
		Observe that $T \cap S'$ is determined by the second step. Condition on the values $\pi$ takes on $f$. For every $e \in S'$ the probability that $e \in T$ is $\Theta \left( \frac{1}{n} \right)$. Therefore $\E \left| T \cap S' \right| = O \left( n p^3 \right)$. Furthermore, composing a transposition of $\F^* \setminus \left( \pi \left( f \right) \cup \left\{ \pi x_1 + \pi x_2 + \ldots \pi x_{k-1} \right\} \right)$ with $\pi|_{f \cup \hf}$ changes $\left| T \cap S' \right|$ by $O \left( 1 \right)$. Therefore, by Lemma \ref{lem:random_injection}, w.v.h.p.\ $\left| T \cap S' \right| = O \left( n p^3 \right)$. Each element of $T \cap S'$ intersects $O \left( 1 \right)$ elements of $S$ on a facet, so $O \left( n p^3 \right)$ elements of $S$ are removed from $G_0$ in this way. Call the remaining edges of $S$ the surviving edges.
		
		We now estimate how many of the surviving edges are removed from $G_0$ in the third stage of choosing $\pi$. A surviving edge $e \in S$ is removed from $G_0$ iff it intersects an edge from $T$ on a facet. Since $\pi \left( \hf \right)$ has already been determined and $e$ survived, this happens iff there is some $e' \in H \setminus S'$ that intersects $e$ on a facet and $e' \in T$. Since $e' \notin S'$, it has a vertex in $\left[ n \right] \setminus \left( f \cup \hf \right)$. Therefore the probability that $e' \in T$ is $O \left( \frac{1}{n} \right)$. Since $e$ intersects $O \left(np\right)$ edges on a facet, the probability that $e \notin K_{k-1} \left( G_0 \right)$ is $O \left( p \right)$. Therefore the expected number of edges removed from $H \left( f \right)$ in this way is $O \left( \left| H \left( f \right)\right| p \right) = O \left( np^2 \right)$. Composing a transposition of $\left[n\right] \setminus \pi \left( f \cup S \right)$ with $\pi$ alters this estimate by $O \left( 1 \right)$. Applying Lemma \ref{lem:random_injection} we conclude that w.v.h.p.\ the number of edges removed in this way is $O \left( np^2 \right)$. Therefore:
		$$
		\left| G_0 \left( f \right) \right|
		= \left| H \left( f \right) \right| - O \left( np^2 \right)
		= \left( 1 \pm O \left( p \right) \right) np
		$$
		
		\item By the proof of the previous item, each $f \in K_{k-1}^n \setminus K_{k-1} \left( T \right)$ is contained in at least one (in fact, many) edges of $G_0$. Therefore, $L_0 = K_{k-1} \left( G_0 \right) = K_{k-1}^n \setminus K_{k-1} \left( T \right)$. In the proof of item \ref{itm:g_0_size} we showed that $\left| T \right| = O \left( n^{k-1} p \right)$. Thus:
		$$
		\left| L_0 \right| = \left| K_{k-1}^n \right| - k \left| T \right| = \binom{n}{k-1} - O \left( n^{k-1} p \right) = \left( 1 \pm O \left( p \right) \right) \binom{n}{k-1}
		$$
		
		\item Items \ref{itm:g_0_size} and \ref{itm:g_0_k-1_size} occur w.v.h.p., in which case:
		$$
		D_0 = \frac{k \left| G_0 \right|}{\left| K_{k-1} \left( G_0 \right) \right|}
		= \frac{k \left( 1 \pm O \left( p \right) \right) \binom{n}{k} p}{\left( 1 \pm O \left( p \right)\right) \binom{n}{k-1}}
		= \left( 1 \pm O \left( p \right) \right) np
		$$
		
		\item First observe that item \ref{itm:g_0_k-1_size} implies $d \left( K_{k-1} \left( G_0 \right) \right) = \left( 1 - O \left( p \right) \right)$.
		
		As shown previously, w.v.h.p.\ $K_{k-1} \left( H \right) = K_{k-1}^n$, in which case $K_{k-1} \left( H \right)$ is $\left( O \left( \frac{1}{n} \right) , h \right)$-typical. Furthermore, by a Chernoff bound, w.v.h.p.\ every $g \in K_{k-2}^n$ is contained in $O \left( n^2 p\right)$ edges of $H$. Condition on these occurrences.
		
		Let $g \in K_{k-2}^n$. We first bound the number of edges in $T$ that contain $g$. Denote this quantity by $X$. Conditioning on $\pi \left( g \right)$, the probability that a given edge in $H$ will be in $T$ is $O \left(\frac{1}{n}\right)$. Therefore $\E X = O \left( np \right)$. Composing a transposition of $\F^* \setminus \pi \left( g \right)$ with $\pi$ affects this estimate by $O \left( 1 \right)$. Therefore, by Lemma \ref{lem:random_injection}, w.v.h.p.\ $X = O \left( np \right)$. Therefore, w.v.h.p., every element of $K_{k-2}^n$ is contained in $O \left( np \right)$ facets in $K_{k-1} \left( T \right)$.
		
		Now, let $\ell \leq h$ and let $g_1,g_2 , \ldots ,g_\ell \in K_{k-2}^n$ be distinct. We have:
		$$
		\left| \bigcup_{i \in \left[ \ell \right]} K_{k-1} \left( T \right) \left( g_i \right) \right| = \ell O \left( np \right) = O \left( np \right)
		$$
		Therefore:
		\begin{gather*}
		\begin{align*}
			\left| \bigcap_{i \in \left[ \ell \right]} K_{k-1} \left( G_0 \right) \left( g_i \right) \right|
			&\geq \left| \bigcap_{i \in \left[ \ell \right]} K_{k-1} \left( H \right) \left( g_i \right) \right| - \left| \bigcup_{i \in \left[ \ell \right]} K_{k-1} \left( T \right) \left( g_i \right) \right|\\
			&= \left( 1 - O \left(\frac{1}{n}\right)\right) n - O \left( np \right)
			= \left( 1 \pm O \left( p \right) \right) n\\
			&= \left( 1 \pm O \left( p \right) \right) d \left( K_{k-1} \left( G_0 \right) \right)^\ell n
		\end{align*}
		\end{gather*}
		as desired.
		
		\item W.v.h.p.\ $K_{k-1} \left( H \right) = K_{k-1}^n$ implying that $K_{k-1} \left( H \right)$ is $\alpha$-affine-bounded for some $\alpha = O \left( 1 \right)$. Since $L_0 \subseteq K_{k-1} \left( H \right)$ and by item \ref{itm:g_0_k-1_size} w.v.h.p.\ $\left| L_0 \right| = \Omega \left( \left| K_{k-1} \left( H \right) \right| \right)$, $L_0$ is $\beta$-affine-bounded for some $\beta = O \left( 1 \right)$.
	\end{enumerate}	

\end{proof}

\begin{proof}[Proof of Lemma \ref{lem:induction_step}]
	\newcommand{\oalpha}{{O \left( \alpha \right)}}
	Recall that $S \subseteq G$ is a random subset where each edge is included with probability $\frac{\nu}{D}$ and $S' \subseteq S$ is the set of all edges in $S$ that are facet disjoint from all other edges in $S$.
	
	Let $e \in G$. We first estimate $\Prob \left[ e \in S' \right]$. Since by item \ref{itm:regularity} $e$ intersects $\left( 1 \pm \alpha \right)kD -\left( k - 1 \right) = \left( 1 \pm \oalpha \right) D$ other edges on a facet, we have:
	\begin{equation}\label{eq:nibble_prob}
		\Prob \left[ e \in S' \right]
		= \frac{\nu}{D} \left(1 - \frac{\nu}{D}\right)^{\left(1 \pm \oalpha \right) kD}
		= \left( 1 \pm \oalpha \right) \frac{\nu e^{- \nu k }}{D}
	\end{equation}
	This allows us to estimate the probability that $e \in G'$. We first write:
	$$
	\Prob \left[ e \in G' \right] = \Prob \left[ e \in G' \given e \notin S' \right] \cdot \Prob \left[ e \notin S' \right] + \Prob \left[ e \in G' \given e \in S' \right] \cdot \Prob \left[ e \in S' \right]
	$$
	Observe that by definition, if $e \in S'$ then $e \notin G'$, so the second term contributes $0$ to the sum. We next estimate the first term. Write:
	\begin{gather*}
	\begin{align*}
	\Prob \left[ e \in G' \given e \notin S' \right]
	= &\Prob \left[ e \in G' \given e \notin S' \land e \in S \right] \cdot \Prob \left[ e \in S \given e \notin S' \right]\\
	&+ \Prob \left[ e \in G' \given e \notin S' \land e \notin S \right] \cdot \Prob \left[ e \notin S \given e \notin S' \right]
	\end{align*}
	\end{gather*}
	$\Prob \left[ e \in S \given e \notin S' \right] \leq \frac{\Prob \left[ e \in S \right]}{\Prob \left[ e \notin S' \right]} \leq \frac{\Prob \left[ e \in S \right]}{\Prob \left[ e \notin S \right]} \leq \frac{1}{D} = \oalpha$. Hence:
	\begin{gather*}
	\begin{align*}
	\Prob & \left[ e \in G' \given e \notin S' \right] \cdot \Prob \left[ e \notin S' \right]\\
	& = \Prob \left[ e \in G' \given e \notin S' \land e \notin S \right] \cdot \Prob \left[ e \notin S \given e \notin S' \right] \left( 1 \pm \oalpha \right) \pm \oalpha\\
	&= \Prob \left[ e \in G' \given e \notin S \right] \cdot \left( 1 \pm \oalpha \right) \pm \oalpha
	\end{align*}
	\end{gather*}
	Finally, conditioning on $e \notin S$, $e \in G'$ iff none of the $k$ facets contained in $e$ are covered by an edge in $S'$. For each of the $k$ facets, the event that it is covered depends only on the $\left( 1 \pm \oalpha \right) D$ edges containing it, with the exception of $e$ itself. Since no edge besides $k$ contains two of these facets, these $k$ events are independent. Furthermore, each facet is covered by $S'$ iff exactly one of the edges containing it is in $S$. Therefore:
	$$
	\Prob \left[ e \in G' \given e \notin S \right]
	= \left( 1 \pm \oalpha \right) \bitesize
	$$
	Hence:
	$$
	\Prob \left[ e \in G' \right] = \left( 1 \pm \oalpha \right) \bitesize
	$$
	
	Therefore:
	$$
	\E \left[ \left| G' \right| \right] = \left( 1 \pm \oalpha \right) \bitesize \left| G \right|
	$$
	$\left|G'\right|$ is a Boolean function of the i.i.d.\ Bernoulli random variables indicating whether the edges of $G$ are in $S$. Changing one of these variables affects $\left|G'\right|$ by $O \left( D \right)$. Applying Lemma \ref{lem:boolean_concentration}, w.v.h.p.\ $ \left| G' \right| = \left( 1 \pm \oalpha \right) \bitesize \left| G \right|$. This proves item \ref{itm:conclusion_size}.
	
	We now show that w.v.h.p.\ for all $f \in K_{k-1} \left( G \right) \setminus K_{k-1} \left( S' \right)$:
	$$
	\left| G' \left( f \right) \right| = \left( 1 \pm \oalpha \right) \left( 1 - \nu e^{-k \nu} \right)^{k-1} D
	$$
	This will imply items \ref{itm:conc_degree}, \ref{itm:conclusion_reg}, and \ref{itm:leave_cover}. Let $f \in K_{k-1} \left( G \right)$. Then $\left| G \left( f \right) \right| = \left( 1 \pm \alpha \right)D$. Consider an edge $f \subseteq e \in G$. $e$ intersects $\left( 1 \pm \oalpha \right)kD$ other edges on a facet, and $ \left( 1 \pm O \left( \alpha \right) \right) D$ of these contain $f$. Therefore, conditioning on $f \notin K_{k-1} \left( S' \right)$, by a calculation similar to the last, we have:
	$$
	\Prob \left[ e \in G' \given f \notin K_{k-1} \left( S' \right) \right]
	= \left( 1 \pm \oalpha \right) \left( 1 - \nu e^{- k \nu} \right)^{k-1}
	$$
	Therefore:
	\begin{gather*}
		\begin{align*}
			\E \left[ \left| G' \left( f \right) \right| \given f \notin K_{k-1} \left( S' \right) \right]
			&= \left( 1 \pm \oalpha \right) \left( 1 - \nu e^{- k \nu} \right)^{k-1} \left| G \left( f \right) \right|\\
			&= \left( 1 \pm \oalpha \right) \left( 1 - \nu e^{- k \nu} \right)^{k-1} D
		\end{align*}
	\end{gather*}
	We'd like to apply Lemma \ref{lem:boolean_concentration} to obtain measure concentration of $\left| G' \left( f \right) \right|$. $\left| G' \left( f \right) \right|$ is a Boolean function of the $\left| G \right|$ variables indicating whether the edges of $G$ are in $S$. However, these variables are too numerous for Lemma \ref{lem:boolean_concentration} to be useful. To overcome this, we observe that $\left| G' \left( f \right) \right|$ is actually determined by a small subset of these variables. Set:
	$$
	X = \bigcup_{f \subseteq e \in G } \bigcup_{f' \in K_{k-1} \left(e\right)} \left\{ e' \in G : f' \subseteq e' \land f \nsubseteq e' \right\}
	$$
	
	If we condition on $f \notin K_{k-1} \left( S' \right)$, the value of $\left| G' \left( f \right) \right|$ depends only on the $O \left( D^2 \right)$ i.i.d.\ Bernoulli random variables that indicate whether the edges in $X$ are in $S$. Changing the value of one of these variables affects $\left| G' \left( f \right)\right|$ by $O \left( 1 \right)$. Applying Lemma \ref{lem:boolean_concentration} we obtain that w.v.h.p.\ $\left| G' \left( f \right)\right| = \left( 1 \pm \oalpha \right) \left( 1 - \nu e^{- k \nu} \right)^{k-1} D$.
	
	Before proceeding we observe that items \ref{itm:conc_degree} and \ref{itm:conclusion_size} imply:
	$$
	d \left( K_{k-1} \left( G' \right) \right) = \left( 1 \pm \oalpha \right) \left( 1 - \nu e^{-k \nu} \right) d \left( K_{k-1} \left( G \right) \right)
	$$
	
	We turn to item \ref{itm:conclusion_typical}. Let $s_1 , s_2 , \ldots , s_\ell \in K_{k-2}^n , \ell \leq h$ be distinct. Then, by the inductive hypothesis, $\left| \cap_{i \in \left[ \ell \right]} K_{k-1} \left( G \right) \left( s_i \right) \right| = \left( 1 \pm \alpha \right) d \left( K_{k-1} \left( G \right) \right)^\ell n$. Let $x \in \cap_{i \in \left[ \ell \right]} K_{k-1} \left( G \right) \left( s_i \right)$. We first estimate $\Prob \left[ x \in \cap_{i \in \left[ \ell \right]} K_{k-1} \left( G' \right) \left( s_i \right) \right]$. Set $f_i = s_i \cup \left\{ x \right\} \in K_{k-1} \left( G \right)$. $x \in \cap_{i \in \left[ \ell \right]} K_{k-1} \left( G' \right) \left( s_i \right)$ iff none of $f_1 , f_2 , \ldots , f_\ell$ are covered by $S'$. The event $f_i \in K_{k-1} \left( S' \right)$ depends only on the i.i.d.\ Bernoulli random variables indicating whether the edges containing $f_i$ are in $S$. There are at most $O \left( 1 \right)$ edges in $G$ covering more than one of $f_1 , f_2 , \ldots , f_\ell$ (because given any two distinct facets there is at most one edge containing both of them). Denote by $M$ the set of these edges, and observe that $\Prob \left[ M \cap S \neq \emptyset \right] \leq M \frac{\nu}{D} = \oalpha$. We write:
	\begin{gather*}
	\begin{align*}
	\Prob \left[ x \in \cap_{i \in \left[ \ell \right]} K_{k-1} \left( G' \right) \left( s_i \right) \right]
	= & \Prob \left[ x \in \cap_{i \in \left[ \ell \right]} K_{k-1} \left( G' \right) \left( s_i \right)  \given M \cap S = \emptyset \right] \cdot \Prob \left[ M \cap S = \emptyset \right]\\
	& + \Prob \left[ x \in \cap_{i \in \left[ \ell \right]} K_{k-1} \left( G' \right) \left( s_i \right)  \given M \cap S \neq \emptyset \right] \cdot \Prob \left[ M \cap S \neq \emptyset \right]\\
	= & \left( 1 \pm \oalpha \right) \Prob \left[ \left\{ f_1 , f_2 , \ldots , f_\ell \right\} \cap K_{k-1} \left( S' \right) = \emptyset \given M \cap S = \emptyset \right]\\
	& + \oalpha
	\end{align*}
	\end{gather*}
	Conditioning on $M \cap S = \emptyset$, the events $ \left\{ f_i \in K_{k-1} \left( S' \right) \right\}_{i \in \left[ \ell \right]}$ are independent, and each occurs with probability $\left( 1 \pm \oalpha \right) \nu e^{-k \nu}$. Therefore:
	\begin{equation*}
	\Prob \left[ x \in \cap_{i \in \left[ \ell \right]} K_{k-1} \left( G' \right) \left( s_i \right) \right]
	= \left( 1 \pm \oalpha \right) \left( 1 - \nu e^{- k \nu} \right)^\ell
	\end{equation*}
	Hence:
	\begin{gather*}
	\begin{align*}
	\E \left| \bigcap_{i \in \left[ \ell \right]} K_{k-1} \left( G' \right) \left( s_i \right) \right|
	& = \left( 1 \pm \oalpha \right) \left( 1 - \nu e^{-k \nu} \right)^\ell d \left( K_{k-1} \left( G \right) \right)^\ell n\\
	& = \left( 1 \pm \oalpha \right) d \left( K_{k-1} \left( G' \right) \right)^\ell n
	\end{align*}
	\end{gather*}
	
	$\left| \bigcap_{i \in \left[ \ell \right]} K_{k-1} \left( G' \right) \left( s_i \right) \right|$ is a function of the $O \left( n D^2 \right)$ i.i.d.\ Bernoulli random variables indicating whether the edges containing $s_i$ are in $S$. Changing the value of one of these variables affects the function by $O \left( 1 \right)$. Applying Lemma \ref{lem:boolean_concentration} we obtain, w.v.h.p.:
	$$
	\left| \bigcap_{i \in \left[ \ell \right]} K_{k-1} \left( G' \right) \left( s_i \right) \right|
	= \left( 1 \pm \oalpha \right) d \left( K_{k-1} \left( G' \right) \right)^\ell n
	$$
	Hence item \ref{itm:conclusion_typical} holds w.v.h.p.
	
	Finally, we need to show item \ref{itm:conclusion_affine_bounded}, i.e.\ that w.v.h.p.\ $K_{k-1} \left( G' \right)$ is affine-bounded. Since an affine space of dimension $d$ can be partitioned into $\left| \F \right|^{d-1}$ affine spaces of dimension $1$, it's enough to show that for every affine space $A \subseteq \F^{k-1}$ of dimension $1$:
	\begin{gather*}
	\begin{align*}
	\left| A \cap \pi \left( K_{k-1} \left( G' \right) \right) \right|
	&\leq \left( 1 + \oalpha \right) \left| A \cap \pi \left( K_{k-1} \left( G \right) \right) \right| \frac{\left| K_{k-1} \left( G' \right) \right|}{\left| K_{k-1} \left( G \right) \right|}\\
	&= \left( 1 + \oalpha \right) \left| A \cap \pi \left( K_{k-1} \left( G \right) \right) \right| \left( 1 - \nu e^{-k \nu} \right)
	\end{align*}
	\end{gather*}
	
	Let $f' \in A \cap \pi \left( K_{k-1} \left( G \right) \right)$. There is a unique $f \in K_{k-1} \left( G \right)$ s.t.\ $\pi \left( f \right) = f'$. We have:
	$$
	\Prob \left[ f' \in A \cap \pi \left( K_{k-1} \left( G' \right) \right) \right]
	= \Prob \left[ f \in K_{k-1} \left( G' \right) \right]
	= \left( 1 \pm O \left( \alpha \right)\right) \left( 1 - \nu e^{- k \nu} \right)
	$$
	Therefore:
	$$
	\E \left[ \left| A \cap \pi \left( K_{k-1} \left( G' \right) \right) \right| \right]
	= \left| A \cap \pi \left( K_{k-1} \left( G \right) \right) \right| \left( 1 \pm O \left( \alpha \right) \right) \left( 1 - \nu e^{-k \nu} \right)
	$$
	$\left| A \cap \pi \left( K_{k-1} \left( G' \right) \right) \right|$ is determined by the $O \left( \left| A \cap \pi \left( K_{k-1} \left( G \right) \right) \right| D^2 \right) = O \left( n D^2 \right)$ i.i.d.\ Bernoulli random variables that indicate whether edges containing facets from $A$, and those intersecting them on facets, are in $S$. Changing the value of one of the variables affects the count by $O \left( 1 \right)$. Applying Lemma \ref{lem:boolean_concentration}, w.v.h.p.:
	\begin{gather*}
		\begin{align*}
			\left| A \cap \pi \left( K_{k-1} \left( G' \right) \right) \right|
			&= \left| A \cap \pi \left( K_{k-1} \left( G \right) \right) \right| \left( 1 \pm \oalpha \right) \left( 1 - \nu e^{-k \nu} \right)\\
		\end{align*}
	\end{gather*}

\end{proof}

\bibliography{latin_box}
\bibliographystyle{amsplain}

\end{document}